\newcommand{\ie}{{i.e. }}
\newcommand{\et}{\qquad\textrm{and}\qquad}
\newcommand{\tto}{\longrightarrow}
\newcommand{\mto}{\longmapsto}
\newcommand{\END}{\textrm{\sc End}}
\newcommand{\End}{\textrm{End}}
\newcommand{\un}{\mathbf{1}}
\newcommand{\CC}{\mathbb{C}}
\newcommand{\FF}{\mathbb{F}}
\newcommand{\GG}{\mathbb{G}}
\newcommand{\HH}{\mathbb{H}}
\newcommand{\NN}{\mathbb{N}}
\newcommand{\PP}{\mathbb{P}}
\newcommand{\cA}{\mathcal{A}}
\newcommand{\cB}{\mathcal{B}}
\newcommand{\cC}{\mathcal{C}}
\newcommand{\cD}{\mathcal{D}}
\newcommand{\cV}{\mathcal{V}}
\newcommand{\Set}{\mathsf{Set}}
\newcommand{\Vect}{\mathsf{Vect}}
\newcommand{\dgVect}{\mathsf{dgVect}}
\newcommand{\Coalg}{\mathsf{Coalg}}
\definecolor{mat}{rgb}{0.57,0.75,1}
\newtheorem{thm}{Theorem}[subsection]
\newtheorem{prop}[thm]{Proposition}
\newtheorem{cor}[thm]{Corollary}
\newtheorem{lemma}[thm]{Lemma}
\theoremstyle{definition}
\newtheorem{ex}[thm]{Example}
\newtheorem{rem}[thm]{Remark}
\newtheorem{hypo}[thm]{Hypothesis}
\begin{document}

\title{{\bf Cofree coalgebras over operads\\ and representative functions}
\vspace{1cm}}

\author{{\sc M. Anel}}

\maketitle

\begin{abstract}
We give a recursive formula to compute the cofree coalgebra $P^\vee(C)$ over any colored operad $P$ in $\cV=\Set,{\sf CGHaus},{\sf (dg)}\Vect$.
The construction is closed to that of \cite{Smith} but different. 
We use a more conceptual approach to simplify the proofs that $P^\vee$ is the cofree $P$-coalgebra functor and also the comonad generating $P$-coalgebras.

In a second part, when $\cV={\sf (dg)}\Vect$ is the category of vector spaces or chain complexes over a field, we generalize to operads the notion of representative functions of \cite{BL} and prove that $P^\vee(C)$ is simply the subobject of representative elements in the "completed $P$-algebra" $P^\wedge(C)$. This says that our recursion (as well as that of \cite{Smith}) stop at the first step.
\end{abstract}

\tableofcontents

\newpage

\section{Introduction}

This work has two parts.
In a first part we prove that coalgebras over a (colored) operad $P$ are coalgebras over a certain comonad $P^\vee$ by giving a recursive construction of $P^\vee$. In a second part we prove that the recursion is unnecessary in the case where the operad is enriched over vector spaces or chain complexes.

\paragraph{Cofree coalgebras and lax comonads}

The problem of constructing the cofree coalgebra $P^\vee(X)$ over a dg-operad $P$ was solved in \cite{Smith} following idea from \cite{Fox}.
We have prefered, however, a different construction to prove more easily that the coalgebras over the comonad $P^\vee$ are the $P$-coalgebras.
This result can be deduced from \cite{Smith} provided we know the category of $P$-coalgebras is comonadic but our approach does not use the comonadicity theorem. We deduce directly from the construction of $P^\vee$ that it is a comonad and that coalgebras over it coincides with $P$-coalgebras.
We have also try to work with (symmetric) operads in general symmetric monoidal category $\cV$ and not only chain complexes.

We recall that $P$-algebras can easily be seen to be algebras over a monad $P^\sim$ where $P^\sim$ is the analytic functor associated to $P$.
The situation is not so simple for $P$-coalgebras. The "completed $P$-algebra" functor $X\mapsto P^\wedge(X) = \prod_n[P(n),X^{\otimes n}]$, candidate to replace $P^\sim$, does not inherit a comonad structure from the operad structure of $P$ in general. (The obstruction for this is the non invertibility of the natural map $[A,B]\otimes [A',B']\to [A\otimes A',B\otimes B']$ in $\cV$.) However $P^\wedge$ is not far from begin a comonad.

The precise nature of $P^\wedge$ is to be a "lax comonad" by which we mean a lax monoidal functor from $\Delta_+^{op}$ to some endofunctor category. Details are given in section \ref{laxcomonad}. Essentially, a lax comonad is the data of functors $P_n^\wedge$ forming an augmented simplical diagram and natural maps $\alpha:P^\wedge_nP^\wedge_m \to P^\wedge_{n+m}$. If the maps $\alpha$ are isomorphisms, then the lax comonad is a genuine comonad.

The definition of a coalgebra over a comonad extends to a notion of a coalgebra over a lax comonad. Then, it is easy to see that $P$-coalgebras are exactly coalgebras over the lax comonad $P^\wedge$. Moreover, we show in proposition \ref{propcoref} that if $P^\wedge$ has a coreflection $Q$ into the subcategory of comonads, then $P^\wedge$-coalgebras coincides with $Q$-coalgebras. This proves not only that the cofree $P$-coalgebra can be described as a the cofree $Q$-coalgebra but also that the category of $P$-coalgebra is comonadic.
Our main result is then the following.
\begin{thm}[thm. \ref{mainthm1}]
The lax comonad $P^\wedge$ has a comonadic coreflection $P^\vee$.
\end{thm}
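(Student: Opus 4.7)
The plan is to construct $P^\vee$ explicitly as a sub-functor of $P^\wedge$, cut out by the defect to strictness of the lax structure maps $\alpha_{n,m}\colon P^\wedge_n P^\wedge_m \to P^\wedge_{n+m}$, and then to derive comonadicity as a formal consequence of proposition \ref{propcoref}. Concretely, a coreflection of $P^\wedge$ into the subcategory of genuine comonads means a comonad $P^\vee$ equipped with a morphism $P^\vee \to P^\wedge$ of lax comonads which is terminal among such morphisms from strict comonads; the existence of such a coreflection is a purely diagrammatic, universal problem.

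To build $P^\vee(X)$, I would proceed by inverse limit in $\cV$. Start with $X_0 = P^\wedge(X)$. Inductively I would define $X_{n+1} \hookrightarrow X_n$ as the equalizer of the two natural maps $X_n \rightrightarrows P^\wedge_n(X)$ expressing coassociativity at the $n$-th stage of the simplicial structure $\Delta_+^{op}\to \mathrm{End}(\cV)$, pulled back along the lax structure. Set $P^\vee(X) = \lim_n X_n$. Functoriality in $X$ and existence of the limits follow from assumed completeness of $\cV$, and by construction every lax structure map restricts, on $P^\vee(X)$, to an operation that strictly satisfies the simplicial identities. The universal property of the equalizers then promotes this data to a coassociative counital coproduct $P^\vee \to P^\vee P^\vee$, giving a genuine comonad.

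The coreflection property is then nearly formal: a morphism $Q \to P^\wedge$ from a strict comonad $Q$ must equalize the two paths around every coassociativity square on the nose, hence lands in every $X_n$ and factors uniquely through $P^\vee$. Comonadicity comes for free by invoking proposition \ref{propcoref}: since $P^\vee$ is a coreflection, $P^\vee$-coalgebras coincide with $P^\wedge$-coalgebras, which the earlier discussion identifies with $P$-coalgebras; and $P^\vee$-coalgebras are comonadic over $\cV$ by the very fact that $P^\vee$ is now a strict comonad.

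The main obstacle I expect is twofold. First, one must show that the inverse limit $P^\vee$ genuinely assembles into an endofunctor of $\cV$, rather than just a pointwise family of sub-objects, and that the chosen equalizers are natural in $X$. Second, and more subtly, one must verify that the induced coproduct factors as $P^\vee \to P^\vee P^\vee \subseteq P^\wedge P^\wedge$ --- that is, that the restriction to $P^\vee$ of the lax composition lands in $P^\vee$ applied twice, not merely in $P^\wedge P^\wedge$. This is the step where the construction (as in \cite{Smith}) becomes delicate, and it requires exploiting the full simplicial identities of $\Delta_+^{op}$ together with the naturality of the comparison maps $[A,B]\otimes[A',B']\to[A\otimes A', B\otimes B']$, rather than treating each $\alpha_{n,m}$ independently.
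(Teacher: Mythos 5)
Your overall architecture --- a decreasing tower of subfunctors of $P^\wedge$, its inverse limit $P^\vee$, the coreflection property checked by inductively factoring any morphism from a strict comonad through each stage, and comonadicity via proposition \ref{propcoref} --- matches the paper's. But there are two genuine gaps. First, the recursion step is mis-specified. The defect of $P^\wedge$ is not that two coassociativity maps into $P^\wedge_n$ fail to agree; it is that $P^\wedge$ has \emph{no} diagonal into the composite $(P^\wedge)^2$ at all: its codiagonal lands in $P^\wedge_2$, and $\alpha_{1,1}\colon (P^\wedge)^2\to P^\wedge_2$ is not invertible. So the condition imposed at each stage must be a \emph{lifting} condition: $Q_1$ is the fiber product of $(P^\wedge)^2\to P^\wedge_2\leftarrow P^\wedge$, and $Q_{k+1}$ is the limit of a diagram built on the fiber product of $(Q_k)^2\to (Q_{k-1})^2\leftarrow Q_k$ (together with the extra legs that enforce coassociativity). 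An equalizer of two maps $X_n\rightrightarrows P^\wedge_n(X)$ never produces the required map into a composite of endofunctors, so as stated your $X_{n+1}$ does not carry the data needed to continue the induction, let alone to equip the limit with a coproduct.

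Second, and more seriously, you correctly flag the key difficulty --- that the coproduct must land in $P^\vee P^\vee$ rather than merely in $P^\wedge P^\wedge$ --- but you offer no mechanism for it, and you assert that the existence of the coreflection ``is a purely diagrammatic, universal problem.'' It is not. The diagonals $Q_{k+1}\to (Q_k)^2$ assemble into $Q_\infty\to (Q_\infty)^2$ only if $(\lim_k Q_k)\circ(\lim_k Q_k)$ coincides with $\lim_k (Q_k\circ Q_k)$, i.e.\ only if composition of endofunctors commutes with this countable intersection. This is exactly lemma \ref{mainlemma}, and it is where Hypothesis \ref{hypo} enters: one needs the comparison maps $[A,A']\otimes[B,B']\to[A\otimes B,A'\otimes B']$ to be monomorphisms (so that the tower is a tower of subobjects, lemma \ref{Qmono}) and $\otimes$ to commute with countable intersections in each variable (so that each $Q_k$ preserves them and a sifted-limit argument identifies $\lim_{k,\ell}Q_\ell Q_k$ with $\lim_k Q_kQ_k$). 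Appealing to ``the full simplicial identities together with naturality of the comparison maps'' cannot substitute for this: the claim is not a formal consequence of the lax comonad structure, which is precisely why the paper restricts $\cV$ to categories such as $\Set$, cartesian closed categories, $\Vect$ and $\dgVect$.
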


The proof of this theorem is the matter of section \ref {sectioncoreflection}. We construct $P^\vee$ by a recursion explained at the beginning of the section.
Although the idea of this recursion looks fairly general, we have not been able to prove it was valid without some mild assumptions on the symmetric monoidal category wher $P$ lives (see Hypothesis \ref{hypo}). Those assumptions are satisfied for categories such as sets, compactly generated Hausdorff topological spaces, toposes, in fact any cartesian closed category, but also vector spaces and chain complexes over a field.

\paragraph{Representative and recursive elements}

In the second part of this work, we prove that the recursion to construct $P^\vee$ stop at the first step when $P$ in a operad in vector spaces of chain complexes over a field. Since this first step is the same as in \cite{Smith}, this proves also that the recursion therein is unnecessary.
Our approach generalizes to operads methods for associative algebras; we start by recalling these.

\medskip
In \cite{Sweedler}, the author gives a construction of the cofree coassociative coalgebra as a special dual of the free algebra.
Precisely, the duality functor sending a coalgebra $C$ to the algebra $C^\star$ has a right adjoint $A\mapsto A^\circ$ and the $\circ$-dual of the free algebra $T(X)$ is the cofree coalgebra $T^\vee(X^\star)$ on the dual of $X$ (see \cite{Sweedler} or \cite{AJ} for a comprehensive treatment).
Then, in \cite{BL} and \cite{Hazewinkel}, the authors reduce the definition of $A^\circ$ to the fiber product
$$\xymatrix{
A^\circ \ar[rr]\ar[d] && A^\star \otimes A^\star\ar[d]\\
A^\star \ar[rr]^-{m^\star} && (A\otimes A)^\star
}$$
\ie the elements of $A^\circ$ are those of $A^\star$ whose diagonal decomposes in a {\sl finite} sum of tensor products.
Considering that $T(X)^\star$ is the completed tensor algebra $T^\wedge(X^\star)=\prod_n (X^\star)^{\otimes n}$, they prove that the cofree coalgebra $T^\vee(X)$ can be defined as the fiber product
$$\xymatrix{
T^\vee(X) \ar[rr]\ar[d] && T^\wedge(X) \otimes T^\wedge(X)\ar[d]\\
T^\wedge(X) \ar[rr]^-{\Delta = m^\star} && T^\wedge(X)\widehat \otimes T^\wedge(X)
}$$
where $T^\wedge(X)\widehat \otimes T^\wedge(X)$ is the completed tensor product and can be viewed as a subspace of $T^\wedge(X\oplus X)$.

$T^\vee(X)$ is a subspace of $T^\wedge(X)$ and the authors call the elements of $T^\wedge(X)$ belonging to $T^\vee(X)$ {\em representative elements}.
When $X$ is of finite dimension, we have $T^\wedge(X)= (T(X^\star))^\star$ and elements of $T^\wedge(X)$ can be thought as functions on $T(X^\star)$.
An element $f\in T^\wedge(X)$ is representative iff
there exists a finite number of elements $g_i, h_i\in T^\wedge(X)$ such that,
for any $a,b\in T(X^\star)$, 
$$
(\Delta f)(a\otimes b) = f(ab) = \sum_i g_i(a)\otimes h_i(b).
$$
The main lemma of \cite{BL} and \cite{Hazewinkel} is to prove that the $g_i$ and $h_i$ are still representative functions, \ie that 
$\Delta f\in T^\vee(X) \otimes T^\vee(X)\subset T^\wedge(X) \otimes T^\wedge(X)$.
To do this, they characterize representative elements as {\em recursive elements}, \ie elements $f$ such that the family of functions $a\mapsto f(ab)$, parametrized by $b\in T(X^\star)$, generate a finite dimensional vector space in $T^\wedge(X)$. Then it is easy to prove that if $f$ is recursive, $g_i$ and $h_i$ are recursive.

\bigskip
In order to do the same thing for coalgebras over an operad we need some adaptations.
The completed algebra $T^\wedge(X)$ can be replaced by the completed $P$-algebra $P^\wedge(X)=\prod_n[P(n),X^n]$. 
Because the associative operad was generated by a binary operation, the previous constructions involved only two terms tensor products of $T^\wedge(X)$.
But for a general operad, $T^\wedge(X) \otimes T^\wedge(X)$ and $T^\wedge(X)\widehat \otimes T^\wedge(X)$ have to be replaced by bigger objects 
$P^\wedge(P^\wedge(X))$ and $P^\wedge_2(X)$ taking into account all operations of $P$ (see section \ref{laxcomonad} for a definition of the latter).
Now our main result is the following, whose proof is the same as in \cite{BL} and \cite{Hazewinkel} characterizing representative elements as recursive ones.

\begin{thm}[thm. \ref{mainthm2}, cor. \ref{cormainthm2}]
The cofree $P$-coalgebra $P^\vee(X)$ can be defined as the fiber product
$$\xymatrix{
P^\vee(X) \ar[rr]\ar[d] && P^\wedge(P^\wedge(X))\ar[d]\\
P^\wedge(X) \ar[rr]^-{m^\star} && P^\wedge_2(X).
}$$
\end{thm}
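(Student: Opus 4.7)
The plan is to adapt the strategy of \cite{BL} and \cite{Hazewinkel} to the operadic setting, the crucial ingredient being the identification of representative elements with recursive elements.

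Let $Q(X)$ denote the fiber product in the statement. By construction, $Q(X)$ sits inside $P^\wedge(X)$ as the subobject of those $f$ whose image $m^\star(f)\in P^\wedge_2(X)$ lifts through the lax comonad structure map $\alpha\colon P^\wedge(P^\wedge(X))\to P^\wedge_2(X)$. Call these the \emph{representative elements}; in the associative case the condition recovers exactly the requirement that $\Delta(f)$ be a finite sum of tensor products, as in \cite{BL}.

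I would first introduce the operadic version of recursive elements: $f\in P^\wedge(X)$ is \emph{recursive} when its translates, obtained by plugging elements of the free $P$-algebra on $X^\star$ into all but one slot of the operations of $P$ via the lax comonad structure, span a finite-dimensional subspace of $P^\wedge(X)$. The main lemma, following \cite{BL} and \cite{Hazewinkel}, asserts that $f$ is representative iff $f$ is recursive. One direction is immediate: a finite decomposition in $P^\wedge(P^\wedge(X))$ trivially bounds the span of translates. The converse uses a finite basis of the span to manufacture an explicit lift of $m^\star(f)$ through $\alpha$.

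The critical step is then to verify that the factors arising in the decomposition of $m^\star(f)\in P^\wedge(P^\wedge(X))$ are themselves representative. As in the associative case, this is a linear-algebra argument: their translates can be expressed as linear combinations of translates of $f$, hence lie in a finite-dimensional subspace. This shows that the cocomposition restricts to a map $Q(X)\to P^\wedge(Q(X))$ and, together with the lax comonad identities, endows $Q(X)$ with a $P$-coalgebra structure.

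Finally, I would verify the universal property. Any $P$-coalgebra $C$ equipped with a map $C\to X$ induces a map $C\to P^\wedge(X)$ whose image consists of representative elements, the required lift through $\alpha$ being supplied tautologically by the coalgebra structure of $C$. Combined with the uniqueness of the comonadic coreflection of $P^\wedge$ from Theorem \ref{mainthm1}, this identifies $Q$ with $P^\vee$. The main obstacle is the representative-equals-recursive equivalence in the operadic setting: whereas the associative argument tracks only binary translates, for a general operad one must simultaneously handle partial evaluations of every arity and assemble them coherently through the lax comonad structure on $P^\wedge$, making the finite-dimensionality bookkeeping the delicate point.
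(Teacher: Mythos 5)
Your proposal follows essentially the same route as the paper: the heart of the argument is the equivalence of representative and recursive elements (the paper's Proposition \ref{rep=rec}), the observation that translates of recursive functions are recursive, and the resulting fact that the tensor factors of $m^\star(f)$ can themselves be chosen representative (Proposition \ref{liftrepres}), which is exactly what produces the lift $Q(X)\to P^\wedge(Q(X))$. Two points where you diverge or elide something. First, the paper does not run the translate argument directly on $P^\wedge(X)$: it defines representative \emph{functions} on the free algebra $P(C^\star)$, i.e.\ inside $P(C^\star)^\star$, and then proves (Lemma \ref{represrepres}, via a cartesian cube resting on Hypothesis \ref{hypo}) that an element of $P^\wedge(C)$ is representative in the fiber-product sense iff its image in $P(C^\star)^\star$ is a representative function; this transfer is needed because ``translates'' and ``finite-dimensional span'' only make literal sense for functions, and you should not skip it. Second, your endgame differs from the paper's: you propose to verify the couniversal property of $Q(X)$ directly (which can be made to work, but requires you to also check that the induced map $D\to Q(V)$ is a coalgebra morphism and is unique, neither of which you address), whereas the paper argues more economically that since $Q_1(C)$ is naturally a $P$-coalgebra it is a $Q_\infty$-coalgebra, so the identity of $Q_1$ factors through the monomorphism $Q_2\to Q_1$, forcing $Q_1=Q_2=Q_\infty$ and letting Theorem \ref{mainthm1} do all the remaining work. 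Your invocation of ``uniqueness of the comonadic coreflection'' is redundant if you truly verify the universal property by hand; you should pick one of the two closings. Finally, the coassociativity of the structure map on $Q(X)$ is not automatic from ``the lax comonad identities'' alone --- the paper needs a diagram chase using that the comparison maps into $P^\wedge_3$ are monomorphisms --- so that check deserves more than a clause.
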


Representative and recursive elements for operads are defined in sections \ref{represfct} and \ref{recursivefct}. 
The equivalence of the two notions is proven in proposition \ref{rep=rec}.

\paragraph{Einstein convention}

We have chosen to develop this work for colored operads. Notations for such objects can be quite heavy and we have found useful to introduce notational conventions inspired from Einstein summation convention in tensor calculus. These conventions are introduced in section \ref{Einstein} and shorten the notations for ends and coends of functors.

\paragraph{Acknowledgments}

This work was supported by the \href{http://www.math.ethz.ch/}{ETH} in Zürich through the Swiss National Science Foundation (project number $200021-137778$).

\section{The cofree coalgebra}

\subsection{Analytic functors}

We introduce analytic functors and colored operads following \cite{GJ}.

\paragraph{Free symmetric monoidal categories}

Let $X$ be a set, the free symmetric monoidal category on $X$ is noted $S(X)$. Its elements are sequences $(x_1, \dots, x_n)$ of element of $X$ of arbitrary but finite length. As in \cite{GJ}, we shall often use the notation $\overline{x}$ to refer to an arbitrary element of $S(X)$ when we do not need to reduce it to a sequence of elements of $X$.
The morphisms between two sequences $(x_1, \dots, x_n)$ and $(y_1, \dots, y_n)$ are defined to be the permutations $\sigma$ in the symmetric group $S_n$ such that $x_i=y_{\sigma(i)}$. 
There is no morphisms between two sequences $(x_1, \dots, x_n)$ and $(y_1, \dots, y_m)$ if they don't have the same length.
In particular, $S(X)$ is a groupoid and we have a canonical identification $S(X)^{op}= S(X)$.
We shall write $S^n(X)$ the subgroupoid of sequences of length $n$, we have $S(X)=\coprod_n S^n(X)$.

As constructed the category $S(X)$ is not a $\cV$-category but an ordinary category when $X$ is a set. 
We shall denote also by $S(X)$ the free $\cV$-category generated by $S(X)$, the hom objects are defined as the coproducts of the monoidal unit of $\cV$ indexed by the sets $S(X)(\overline{x}, \overline{y})$.

The construction $S(X)$ make sense when $X$ is a small category or $\cV$-category rather than a set. 
In this context $S(X)$ can be proven to be the free symmetric monoidal category on $X$, that is, if $\CC$ is a symmetric monoidal $\cV$-category, there is an equivalence of categories between the category of $\cV$-functors $X\to \CC$ and that of symmetric monoidal $\cV$-functors $S(X)\to \CC$ 
(see \cite{GJ} for details).

\paragraph{Free rigs}

If $\CC$ is a small symmetric monoidal $\cV$-category, the category $Pr(\CC)$ of $\cV$-preseheaves on $\CC$ can be equipped with the Day tensor product.
This product is defined as the cocontinous extension in each variable of the monoidal strucure of $\CC$.
Precisely, we define $\CC(-,x)\otimes_{Day}\CC(-,y)$ as $\CC(-,x\otimes y)$ and, if $F$ and $G$ are presheaves on $\CC$, using their description as colimit of representable presheaves, we define their tensor product by "$\cV$-linearity"
\begin{eqnarray*}
(F\otimes_{Day} G) &=& \left(\int^{x} F(x)\otimes \CC(-,x)\right)\otimes \left(\int^{y} G(y)\otimes \CC(-,y)\right)\\
&=& \int^{x,y} F(x)\otimes G(y)\otimes \CC(-,x)\otimes \CC(-,y)\\
&=& \int^{x,y} F(x)\otimes G(y)\otimes \CC(-,x\otimes y).
\end{eqnarray*}
%Using the concatenation for tensor in $\cV$ and implicit coend convention, this computation becomes simply
%$$
%(F\otimes_{Day} G)^z = F^xG^y(\CC_x\otimes_{Day}\CC_y)^z = F^xG^y\CC^z_{x\otimes y}.
%$$

Let $\cC$ be a $\cV$-category, tensored over $\cV$ presentable as an ordinary category and equipped with a symmetric monoidal structure enriched over $\cV$ (such categories are called rigs in \cite{GJ}).
There exists an equivalence of categories between
$\cV$-cocontinuous symmetric monoidal $\cV$-functors $Pr(\CC)\to \cC$ (rig morphism) and
symmetric monoidal $\cV$-functors $\CC\to \cC$.
This makes $Pr(\CC)$ into the free rig generated by $\CC$.

In the case where $\CC=S(X)$, the rig $Pr(S(X))$ is the free rig on $X$. 
Composing the previous equivalences, there exists an equivalence of categories between
$\cV$-cocontinuous symmetric monoidal $\cV$-functors $Pr(S(X))\to \cC$ and
$\cV$-functors $X\to \cC$.

\paragraph{$S$-distributors and analytic functors}

Let $X$ and $Y$ be two sets, a rig morphism $F:Pr(S(Y))\to Pr(S(X))$ is equivalent to the data of a functor $F:Y\to Pr(S(X))$ or equivalently to a $(S(X),Y)$-bimodule $F:S(X)^{op}\times Y\to \cV$. Such a bimodule is called an {\em $S$-distributor} in \cite{GJ}.

Forgetting the monoidal structure, the universal property of presheaves says that a cocontinuous functor $F:Pr(S(Y))\to Pr(S(X))$ is equivalent to a functor $S(Y)\to Pr(S(X))$, that is to a $(S(Y),S(X))$-bimodule $\FF:S(X)^{op}\times S(Y)\to \cV$. We shall call such a bimodule an {\em $S$-bimodule between $X$ and $Y$}. The $S$-bimodule $\FF$ corresponding to an $S$-distributor $F$ is given by
$$
\FF(\overline{x}; \overline{y}) = \int^{\overline{z_1},\dots, \overline{z_n}\in S(X) }S(X)(\overline{x};\overline{z_1}\dots\overline{z_n})\otimes \bigotimes_i F(\overline{z_i}; y_i) 
$$
Equivalently $\FF(-; \overline{y})$ can be caracterized as the iterated Day tensor product 
$F(-;y_1)\otimes_{Day}\dots \otimes_{Day}F(-;y_n)$.

The composition of two rig morphisms $F:Pr(S(Y))\to Pr(S(X))$ and $G^\sim:Pr(S(Z))\to Pr(S(Y))$  give the following composition formula for the corresponding $S$-distributors $F$ and $G$:
$$
(F\circ G)(\overline{x};z) = \int^{\overline{y}\in S(Y)} F(\overline{y};z) \GG(\overline{x};\overline{y}).
$$

To a morphism of free rigs $F:Pr(S(Y))\to Pr(S(X))$ is associated an {\em analytic functor} $F^\sim:\cV^X \to \cV^Y$ given by the formula
$$
F^\sim(V)(y) = \int^{\overline{x}\in S(X)} F(\overline{x};y) \otimes V(x_1)\otimes \dots \otimes V(x_n).
$$
This correspondance is natural in $F$ and compatible with composition, that is we have $(F\circ G)^\sim = F^\sim \circ G^\sim$.
This correspondance is also fully faithful.

\subsection{Einstein convention}\label{Einstein}

In order to have more compact notation for ends and coends, we are going to make the following conventions inspired from tensor calculus.
\begin{itemize}

\item Contravariant variables of functors will be noted in an upper position and covariant variables in an lower position: a writting such as $F^{a,b}_c$ refers to the values (we shall say the {\em components}) $F(a,b,c)$ of a functor $F:A^{op}\times B^{op}\times C\to \cV$. The categories $A$, $B$, $C$ in question should be clear from the context. They could be ordinary categories of $\cV$-categories, in which case the functor $F$ is assumed to be a $\cV$-functor.

\item We shall use the same notation for natural transformation and write $\alpha^a_b:F^a_b\to G^a_b$ the component of a natural transformation $\alpha:F\to G$.

\item Tensor products in $\cV$ shall sometimes be written with concatenation. For example the elements $F^aG_b$ are the components of the functor $F\otimes G:A^{op}\times B\to \cV$ with values $F(a)\otimes G(b)$.
The internal hom of $\cV$ shall be noted $[-,-]$, a formula such as $[F_a,G_b]$ refers to the components of the functor $[F,G]:A^{op}\times B\to \cV$ with values $[F(a), G(b)]$. Notice the change of variance of $a$, if $H=[F,G]$, then $H^a = [F_a,G_b]$.

\item In a notation like $F^{a,b}G_b$, when a letter is repeated twice, once in a lower and once in an upper position, we will assume that an implicit coend is taken over this variable:
$$
F^{a,b}G_b = \int^{b\in B}F(a,b)\otimes G(b).
$$
Similarly, in notations like $[F^{a,b},G^b]$ or $[F^a_b,G_b]$, when a letter appears twice, once on each variable of a hom $[-,-]$, both time in an upper position or in a lower position, we will assume that an implicit end is taken over this variable:
$$
[F^{a,b},G^b] = \int_{b\in B}[F(a,b), G(b)].
$$
The categories over which to take the end or coend should be clear from the context.

\item Notations such as $F^{a,a}$, $[F^a,G_a]$, $F^{a,b}G_b H_b$ or $[F^aG^a,H_a]$ are forbidden. But $[F^a,[G_a^b,H^b]]$ is correct.

\end{itemize}

We illustrate these conventions with some classical formulas concerning Yoneda's lemma. We shall use these formula in computations later.
Let $\CC$ be a small $\cV$-category, and $Pr(\CC)$ be the category of $\cV$-presheaves, \ie $\cV$-functor $\CC^{op}\to \cV$.
The fully-faithfullness of Yoneda's embedding $\CC\to Pr(\CC)$ can be written as the following end formula, for any $G:\CC^{op}\to \cV$
$$
\int_{c\in \CC} [\CC(c,d),G(c)] = G(d) \iff [\CC^c_d,G^c] = G^d.
$$
Any presheaf $G$ is also a weighted colimit of representable presheaves, this is best written as a coend
$$
\int^{c\in \CC} F(c)\otimes \CC(d,c) = F(d) \iff F^c\CC^d_c = F^d
$$
which is another instance of Yoneda's lemma.

%For $F:\CC\to \cV$ a covariant functor, the left Kan extension $\widetilde F: Pr(\CC) \to \cV$ of $F$ along $\CC\to Pr(\CC)$ sends a presheaf $G$ to the coend
%$$
%\widetilde F(G) = \int^{c\in \CC} F(c)\otimes G(c).
%$$
%When $F=\CC(-,d)$ is representable, another instance of Yoneda's lemma is the formula
%$$
%\widetilde F(\CC(-,d)) = \int^{c\in \CC} F(c)\otimes \CC(c,d) = G(d) \iff F_c\CC^c_d = F_d.
%$$

\medskip
We recall the fundamental adjunctions for ends and coend. For three functors $F^a_b$, $G^b_c$ and $H^a_c$, we have bijections between the following sets:
\begin{center}
\begin{tabular}{lc}
\rule[-2ex]{0pt}{4ex} transformations natural in $a,b$ & $F^a_bG^b_c\to  H^a_c$, \\
\rule[-2ex]{0pt}{4ex} transformations natural in $a,c$ & $F^a_b\to [G^b_c,H^a_c]$, \\
\rule[-2ex]{0pt}{4ex} transformations natural in $b,c$ & $G^b_c\to [F^a_b,H^a_c]$.
\end{tabular}
\end{center}
For example, when all variable $a,b,c$ belong to the same category $C$, the monoidal structure on $C$-bimodules induced by the coend is the matrix product and the end compute its right adjoint in each variable.

\bigskip
When some of the categories of variables are of the type $S(X)$, we introduce more conventions for functors, ends and coends.
\begin{itemize}

\item Components of a functor $F:Y\times S(X)^{op}\to \cV$ shall be written as $F^{\overline{x}}_y$ and
components of a functor $F:S(Y)\times S(X)^{op}\to \cV$ shall be written as $F^{\overline{x}}_{\overline{y}}$.

For example the $S$-bimodule $\FF$ associated to and $S$-distributor $F$ as components
$$
\FF^{\overline{x}}_{\overline{y}} = S(X)^{\overline{x}}_{\overline{z_1}\dots\overline{z_n}}  F^{\overline{z_1}}_{y_1}\dots  F^{\overline{z_n}}_{y_n}
$$

\item We shall use comma to separate the different variables of same variance of a functor to contrast with the use of concatenation used for the product of elements in $S(X)$. This way $F^{x_1,\dots,x_n}$ or $G_{\overline{x_1},\dots,\overline{x_n}}$ are the values of functors of $n$ variables but $F^{x_1\dots x_n}$ and $G_{\overline{x_1}\dots\overline{x_n}}$ are the values of functors of a single variable in $S(X)$.

\item In a formula like $F^{\overline{x}}_yG^{z_1}_{x_1}\dots G^{z_n}_{x_n}$ where the same letter $x$ is repeated in upper and lower position
once with an overline and all others instances with indices, we shall assume that $\overline{x}=x_1\dots x_n$ and take an implicit coend over $\overline{x}\in S(X)$:
$$
F^{\overline{x}}_yG^{z_1}_{x_1}\dots G^{z_n}_{x_n} = \coprod_\NN \int^{(x_1,\dots x_n)\in S^n(X)}
F(x_1\dots x_n,y)\otimes G(z_1,x_1)\otimes \dots\otimes  G(z_n,x_n).
$$
The variables $z_i$ and their dependance on the size of $\overline{x}$ should be clear from the context.

For such a coend to make sense, the part of the formula where $x$ appears with indices need to be a functor on $S(X)$.
This functorality should always be clear from the context. In the example, the morphisms of $S(X)$ act on the $G$s by permutations.

For example the composition of $S$-distributors is given in components by
$$
(F\circ G)^{\overline{x}}_{z} = F^{\overline{y}}_z \GG^{\overline{x}}_{\overline{y}} = 
F^{\overline{y}}_z 
S(X)^{\overline{x}}_{\overline{z_1}\dots\overline{z_n}}  G^{\overline{z_1}}_{y_1}\dots G^{\overline{z_n}}_{y_n}.
$$

\end{itemize}

\subsection{Operads and associated functors}\label{piepsilon}

Let $X$ be a set, an {\em $X$-colored symmetric $\cV$-operad $P$} (we shall say simply an {\em operad}) is the data of
\begin{itemize}
\item an $S$-distributor $P$ with colors $X$, \ie a functor $P:S(X)^{op}\times X\to \cV$
\item a monoid structure on $P$ for the composition of $S$-distributors.
\end{itemize}

The components of $P$ shall be noted $P^{\overline{x}}_y$ or $P(\overline{x};y)$. 
In case the set of colors as one element $P^{\overline{x}}_y$ depends only on the length of $\overline{x}$ and is simply noted $P(n)$.

\medskip
In components, 
the unit is given by maps $Id^{\overline{x}}_y\to P^{\overline{x}}_y$ where $Id^{\overline{x}}_y$ are the components of the identity operad ($Id^{\overline{x}}_y = 0$, the initial object of $\cV$, unless $\overline{x}=y$, in which case $Id^{\overline{x}}_y = \un$, the monoidal unit of $\cV$)
and the composition is given by maps
$$
P^{\overline{z}}_y P^{\overline{x_1}}_{z_1}\dots P^{\overline{x_n}}_{z_n} = P^{\overline{x}}_y\Big(\bigotimes_iP^{\overline{z_i}}_{x_i}\Big) \tto P^{\overline{x_1}\dots \overline{x_n}}_y.
$$
The associativity condition is equivalent to the commutation of the squares
$$\xymatrix{
P^{\overline{x}}_y
\left(\bigotimes_i P^{\overline{z_i}}_{x_i}
\left(\bigotimes_{j_i}P^{\overline{t_{i,j_i}}}_{z_{i,j_i}}\right)\right)
\ar[d]\ar[rr]&&
P^{\overline{x}}_y
\left(\bigotimes_i P^{\overline{t_{i,1}}\dots \overline{t_{i,m_i}}}_{x_i}\right)
\ar[d]\\
P^{\overline{z_1}\dots \overline{z_n}}_y
\left(\bigotimes_{i,j_i}P^{\overline{t_{i,j_i}}}_{z_{i,j_i}}\right)\
\ar[rr]&&
P^{\overline{t_{1,1}}\dots \overline{t_{n,m_n}}}_y
}$$
We leave the reader to explicit in components the identity conditions.

\bigskip
We associate to $P$ several objects.
\begin{itemize}
\item The {\em analytic functor of $P$} is the functor $P^\sim:\cV^X\to \cV^X$ defined by 
$$
P^\sim(A)_y = P^{\overline{x}}_yA_{\otimes \overline{x}} = P^{\overline{x}}_yA_{x_1}\dots A_{x_n}.
$$
$P^\sim$ is the free $P$-algebra functor.
Because $P$ is an operad, $P^\sim$ is a monad.

\item The {\em co-analytic functor of $P$} is the functor $P^\wedge:\cV^X\to \cV^X$ defined by 
$$
P^\wedge(C)^y = [P^{\overline{x}}_y,C^{\otimes \overline{x}}] = [P^{\overline{x}}_y,C^{x_1}\dots C^{x_n}].
$$

\begin{rem}
The notation for variance makes it clear that, if $X$ was a category, this functor would be defined on $\cV^{X^{op}}$.
\end{rem}

\begin{rem}
This functor could be called the "completed $P$-algebra functor" but we shall not use this name as the notation of variance makes it clear that there is no natural map $P^\sim(A)\to P^\wedge(A)$. (This is even clearer when $X$ is replaced by a category where $P^\sim$ and $P^\wedge$ are not defined on the same categories.) However there is a natural map $P^\sim(A)^\star\to P^\wedge(A^\star)$ (where $B^\star=[B,\un]$ is the dual of $A$ in $\cV$). 
%This map can be trivial if $\cV=\Set$ but is a monomorphism if $\cV=\Vect$ or $\dgVect$.

Nonetheless, it is this object $P^\wedge(C)$ that will play the role of $T^\wedge(X)$ in the construction of the cofree $P$-coalgebra.
\end{rem}

\begin{rem}
$P$ being an operad does not imply that $P^\wedge$ is a comonad. 
We give some detail as this is somehow the source of all the trouble to construct the cofree $P$-coalgebra.
Let us consider the following diagram of natural maps
$$\xymatrix{
& [P^{\overline{z}}_y,\otimes_i [P^{\overline{x_i}}_{z_i},C^{\otimes \overline{x_i}}]] \ar[d]\\
[P^{\overline{x}}_y,C^{\otimes \overline{x}}]\ar[r]\ar@{..>}[ru] &[P^{\overline{z}}_y, [\otimes_iP^{\overline{x_i}}_{z_i},\otimes_iC^{\otimes \overline{x_i}}]]
}$$
where the bottom map is induced by the multiplication of $P$ and the vertical map is induced by the monoidal structure.
We have $P^\wedge(P^\wedge(C))^y = [P^{\overline{z}}_y,\otimes_i [P^{\overline{x_i}}_{z_i},C^{\otimes \overline{x_i}}]]$
and a map $P^\wedge(C)^y\to P^\wedge(P^\wedge(C))^y$ would be a diagonal lift in the previous diagram.
But there can be such a lift, natural in $C$, only if the vertical is an isomorphism, which is almost never the case.

\end{rem}

\item The PROP of $P$ is the symmetric monoidal $S(X)$-category $\PP$ defined by
$$
\PP^{\overline{x}}_{\overline{y}} = 
P^{\overline{z_1}}_{y_1}\dots P^{\overline{z_n}}_{y_n} 
S(X)^{\overline{x}}_{\overline{z_1}\dots \overline{z_n}}.
$$
This definition of $\PP$ is actually the Day product $(P^{(-)}_{y_1}\otimes_{Day}\dots \otimes_{Day}P^{(-)}_{y_n})^{\overline{x}}$.

The composition $\PP^{\overline{y}}_{\overline{z}}\PP^{\overline{x}}_{\overline{y}} \tto \PP^{\overline{x}}_{\overline{z}}$ is 
implied by the monoid structure of $P$ and essentially given by 
$$
P^{\overline{y_1}}_{z_1}\dots P^{\overline{y_n}}_{z_n} 
\left(P^{\overline{x_{11}}}_{y_{11}}\dots P^{\overline{x_{1m_1}}}_{y_{1m_1}}\right)
\dots 
\left(P^{\overline{x_{n1}}}_{y_{n1}} \dots P^{\overline{x_{nm_n}}}_{y_{nm_n}}\right)
\tto
P^{\overline{x_{11}}\dots \overline{x_{1m_1}}}_{z_1}\dots P^{\overline{x_{n1}}\dots \overline{x_{nm_n}}}_{z_n}.
$$

The symmetric monoidal structure is given by concatenation on the objects and on the arrows the structure maps
$\PP^{\overline{x}}_{\overline{y}}\PP^{\overline{x'}}_{\overline{y'}} \tto \PP^{\overline{x}\overline{x'}}_{\overline{y}\overline{y'}}$
are induced by the natural maps
$P^{\overline{x_1}}_{y_1}\dots P^{\overline{x_n}}_{y_n} \tto \PP^{\overline{x_1}\dots \overline{x_n}}_{y_1\dots y_n}$
existing by definition of $\PP$ as a coend.
%$$
%\left(P^{\overline{x_1}}_{y_1}\dots P^{\overline{x_n}}_{y_n} \right)\left(P^{\overline{x'_1}}_{y'_1}\dots P^{\overline{x'_m}}_{y'_m} \right)
%\tto 
%P^{\overline{x_1}\dots \overline{x_n}\overline{x'_1}\dots \overline{x'_m}}_{y_1\dots y_n y'_1\dots y'_m}.
%$$

\begin{rem}
The composition of the operad $P$ can be rewritten using $\PP$ as a map $P_y^{\overline{x}}\PP^{\overline{z}}_{\overline{x}} \tto P_y^{\overline{z}}$. It is actually a particular case of composition in $\PP$ since $P_y^{\overline{x}}=\PP_y^{\overline{x}}$.
\end{rem}

\item To $\PP$ is associated a cocontinous functor $\PP:\cV^{S(X)}\to \cV^{S(X)}$ given by 
$$
\PP(M)_{\overline{y}} = \PP^{\overline{x}}_{\overline{y}} M_{\overline{x}}
$$
and a continous functor $\PP^\wedge :\cV^{S(X)^{op}}\to \cV^{S(X)^{op}}$ given by 
$$
\PP^\wedge(N)^{\overline{y}} = [\PP^{\overline{x}}_{\overline{y}},N^{\overline{x}}].
$$
We emphasize the difference of variance on $S(X)$ for the two functors. In particular, these two functors are not adjoint to each other (although they do have adjoints but we shall not consider them).

\begin{rem}
Because $\PP$ is a category, the functor $\PP$ is a monad and the functor $\PP^\wedge$ is a comonad.
This is actually the point of considering $\PP^\wedge$, this comonadic functor will be a sort of replacement for the non-existent comonadic structure on $P^\wedge$. %We shall write $ \Delta :\PP^\wedge\to \PP^\wedge\PP^\wedge$ the coproduct of $\PP^\wedge$.
\end{rem}

\begin{rem}
The monoidal structure for $\PP$ is responsible for the functor $\PP$ to be monoidal if $\cV^{S(X)} = Pr(S(X)^{op})$ is endowed with the Day product, but we shall not use this. The functor $\PP^\wedge$ is only lax monoidal.
\end{rem}
\end{itemize}

We shall also need four functors $\epsilon:\cV^X\to \cV^{S(X)}$, $\epsilon:\cV^X\to \cV^{S(X)^{op}}$, $\pi:\cV^{S(X)}\to \cV^X$ and $\pi:\cV^{S(X)^{op}}\to \cV^X$.
The $\epsilon$ are defined respectively by 
$$
\epsilon(A)_{\overline{x}} = A_{\otimes \overline{x}} = A_{x_1}\dots A_{x_n} \et 
\epsilon(C)^{\overline{x}} = C^{\otimes \overline{x}} = C^{x_1}\dots C^{x_n}
$$
with morphisms permuting the factors.
The projections $\pi$ are defined respectively by 
$$
\pi(A)_{x} = A_x \et 
\pi(C)^{x} = C^x.
$$

There is a number of formulas relating $\epsilon$ and $\pi$ with the functors associated to $P$
	\begin{itemize}
	\item $\pi\epsilon=id$
	\item $\pi\PP\epsilon = P^\sim$
	\item $\pi\PP^\wedge\epsilon = P^\wedge$
	\item $\epsilon P^\sim = \PP\epsilon$ (and thus $\pi\PP^n\epsilon = (P^\sim)^n$)
%	\item and there is a map $\alpha:\epsilon\pi \PP^\wedge \to \PP^\wedge\epsilon\pi$.
	\item and there are maps $\alpha_{n,m}:\pi(\PP^\wedge)^n\epsilon\pi(\PP^\wedge)^m\epsilon\to \pi(\PP^\wedge)^{n+m}\epsilon$.
	\end{itemize}
The first relations are obvious. 
We prove that $\epsilon P^\sim = \PP\epsilon$:
\begin{eqnarray*}
(\epsilon P^\sim)(A)_{\overline{y}} &=& \left(P^{\overline{x_1}}_{y_1}A_{\otimes \overline{x_1}}\right)\dots \left(P^{\overline{x_n}}_{y_n}A_{\otimes \overline{x_n}}\right)\\
&=& \left(P^{\overline{x_1}}_{y_1}\dots P^{\overline{x_n}}_{y_n}\right)A_{\otimes \overline{x_1}\dots \overline{x_n}}\\
&=& \left(P^{\overline{x_1}}_{y_1}\dots P^{\overline{x_n}}_{y_n}\right)S(X)_{\overline{x_1}\dots \overline{x_n}}^{\overline{z}}A_{\otimes \overline{z}}\\
&= & \PP^{\overline{z}}_{y_1\dots y_n} A_{\otimes \overline{z}}= \PP(\epsilon A)_{\overline{y}}.
\end{eqnarray*}
We have used Yoneda lemma to compute $A_{\otimes \overline{x_1}\dots \overline{x_n}} = S(X)_{\overline{x_1}\dots \overline{x_n}}^{\overline{z}}A_{\otimes \overline{z}}$.

%The map $\alpha:\epsilon \pi\PP^\wedge \to \PP^\wedge\epsilon\pi$ is constructed in a similar way:
%\begin{eqnarray*}
%\epsilon \pi\PP^\wedge(M)^{\overline{x}} &=& \bigotimes_i[P^{\overline{z_i}}_{x_i},M^{\overline{z_i}}]\\
%& \to & [P^{\overline{z_1}}_{x_1}\dots P^{\overline{z_n}}_{x_n},\bigotimes_iM^{\overline{z_i}}]\\
%& \leftarrow & [P^{\overline{u_1}}_{x_1}\dots P^{\overline{u_n}}_{x_n}S(X)^{\overline{z}}_{\overline{u_1}\dots \overline{u_n}},S(X)^{\overline{z}}_{\overline{v_1}\dots \overline{v_n}} \bigotimes_iM^{\otimes \overline{v_i}}]\\
%\end{eqnarray*}
%We have used the canonical map $[A,B]\otimes [A',B']\to [A\otimes A',B\otimes B']$ and Yoneda lemma 

\medskip

The maps $\pi(\PP^\wedge)^n\epsilon\pi(\PP^\wedge)^m\epsilon\to \pi(\PP^\wedge)^{n+m}\epsilon$ are constructed by iterating the canonical map $\otimes_i[A_i,B_i]\to [\otimes_i A_i,\otimes_i B_i]$.
\begin{eqnarray*}
(\pi(\PP^\wedge)^n\epsilon\pi(\PP^\wedge)^m\epsilon)(C)^{y} &=& 
[P^{\overline{x_1}}_{y},\dots [\PP^{\overline{x_n}}_{\overline{x_{n-1}}}, \otimes_i [P^{\overline{z_{1,i}}}_{x_{n,i}},\dots [\PP^{\overline{z_{m,i}}}_{\overline{z_{m-1,i}}},C^{\otimes \overline{z_{m,i}}}]\dots ]\\
&\to & [P^{\overline{x_1}}_{y},\dots [\PP^{\overline{x_{n}}}_{\overline{x_{n-1}}}, 
[\otimes_i P^{\overline{z_{1,i}}}_{x_{n,i}},\otimes_i [\PP^{\overline{z_{2,i}}}_{\overline{z_{1,i}}}, \dots [\PP^{\overline{z_{m,i}}}_{\overline{z_{m-1,i}}},C^{\otimes \overline{z_{m,i}}}]\dots ]\\
&\dots&\\
&\to & [P^{\overline{x_1}}_{y},\dots [\PP^{\overline{x_{n}}}_{\overline{x_{n-1}}}, 
[\otimes_i P^{\overline{z_{1,i}}}_{x_{n+1,i}},\dots
[\otimes_i\PP^{\overline{z_{m,i}}}_{\overline{z_{m-1,i}}},\otimes_iC^{\otimes \overline{z_{m,i}}}]\dots ]
\end{eqnarray*}
Now we have to prove that the last term is $\pi(\PP^\wedge)^{n+m}\epsilon(C)^y$.
By Yoneda lemma, we have 
$$
\otimes_{i=1}^NC^{\otimes \overline{z_{m,i}}} = C^{\otimes \overline{z_{m,1}}\dots \overline{z_{m,N}}} = [S(X)_{\overline{z_{m,1}}\dots \overline{z_{m,N}}}^{\overline{u_m}},C^{\otimes \overline{u_m}}].
$$
Using this and the definition of $\PP$, we get
\begin{eqnarray*}
[\otimes_i\PP^{\overline{z_{m,i}}}_{\overline{z_{m-1,i}}},\otimes_iC^{\otimes \overline{z_{m,i}}}] 
&=& [\otimes_i\PP^{\overline{z_{m,i}}}_{\overline{z_{m-1,i}}},[S(X)_{\overline{z_{m,1}}\dots \overline{z_{m,N}}}^{\overline{u_m}},C^{\otimes \overline{u_m}}]]\\
&=& [\otimes_i\PP^{\overline{z_{m,i}}}_{\overline{z_{m-1,i}}}S(X)_{\overline{z_{m,1}}\dots \overline{z_{m,N}}}^{\overline{u_m}},C^{\otimes \overline{u_m}}]\\
&=& [\PP^{\overline{u_m}}_{\overline{z_{m-1,1}}\dots \overline{z_{m-1,N}}},C^{\otimes \overline{u_m}}].
\end{eqnarray*}
Continuing, we get
\begin{eqnarray*}
[\otimes_i\PP^{\overline{z_{m-1,i}}}_{\overline{z_{m-2,i}}},[\PP^{\overline{u_m}}_{\overline{z_{m-1,1}}\dots \overline{z_{m-1,N}}},C^{\otimes \overline{u_m}}]]
&=& [\otimes_i\PP^{\overline{z_{m-1,i}}}_{\overline{z_{m-2,i}}},[SX_{\overline{z_{m-1,1}}\dots \overline{z_{m-1,N}}}^{\overline{u_{m-1}}},[\PP^{\overline{u_m}}_{\overline{u_{m-1}}},C^{\otimes \overline{u_m}}]]]\\
&=& [\otimes_i\PP^{\overline{z_{m-1,i}}}_{\overline{z_{m-2,i}}}SX_{\overline{z_{m-1,1}}\dots \overline{z_{m-1,N}}}^{\overline{u_{m-1}}},[\PP^{\overline{u_m}}_{\overline{u_{m-1}}},C^{\otimes \overline{u_m}}]]\\
&=& [\PP_{\overline{z_{m-2,1}}\dots \overline{z_{m-2,N}}}^{\overline{u_{m-1}}},[\PP^{\overline{u_m}}_{\overline{u_{m-1}}},C^{\otimes \overline{u_m}}]].
\end{eqnarray*}
Iterating this, we finally get
\begin{eqnarray*}
[P^{\overline{x_1}}_{y},\dots [\PP^{\overline{x_{n+1}}}_{\overline{x_n}}, 
[\otimes_i \PP^{\overline{z_{1,i}}}_{x_{n+1,i}},\dots
[\otimes_i\PP^{\overline{z_{m+1,i}}}_{\overline{z_{m,i}}},\otimes_iC^{\otimes \overline{z_{m+1,i}}}]\dots ] &=& 
[P^{\overline{x_1}}_{y},\dots [\PP^{\overline{x_{n+1}}}_{\overline{x_n}}, 
[\PP^{\overline{u_{1}}}_{\overline{x_{n+1}}},\dots 
[\PP^{\overline{u_m}}_{\overline{u_{m-1}}},C^{\otimes \overline{u_m}}]\dots ] \\
&=& \pi(\PP^\wedge)^{n+m}\epsilon(C)^y.
\end{eqnarray*}

\subsection{Coalgebras over an operad}\label{coalgebras}

We recall first the notion of algebra for matter of comparison.
An {\em algebra over an operad} $P$ colored by $X$ is an object $A$ in $\cV^X$ together with maps
$$
P^{\overline{x}}_{y}A_{\otimes \overline{x}}= P^{\overline{x}}_{y}A_{x_1}\dots A_{x_n}\tto A_y
$$
satisfying the unit condition: the composite
$A_y = Id^{\overline{x}}_yA_{\otimes \overline{x}} \to P^{\overline{x}}_yA_{\otimes \overline{x}} \to A_y$ 
must be the identity of $A_y$;
and the associativity condition: the square 
$$
\xymatrix{
P^{\overline{x}}_y
\left(\bigotimes_i P^{\overline{z_i}}_{x_i}
A_{\otimes \overline{z_i}}\right)
\ar[d]\ar[rr]&&
P^{\overline{x}}_y
A_{\otimes \overline{x}}
\ar[d]\\
P^{\overline{z_1}\dots \overline{z_n}}_y
A_{\otimes \overline{z_1}\dots \overline{z_n}}
\ar[rr]&&
A_y
}$$
must commute.

The structure maps of a $P$-algebra $A$ reduce to a single map $P^\sim(A)\to A$ in $\cV^X$ and the algebra condition is precisely the condition for this map to endow $A$ with the strucure of an algebra over the monad $P^\sim$:
the unit condition says that the composition $A\to P^\sim(A)\to A$ should be the identity of $A$
and the associativity condition is the commutation of
$$\xymatrix{
P^\sim (P^\sim (A)) \ar[r]\ar[d]& P^\sim (A)\ar[d]\\
P^\sim (A) \ar[r] &A.
}$$
The situation is not as nice for coalgebras.

\bigskip

A {\em coalgebra over an operad} $P$ colored by $X$ is an object $C$ in $\cV^X$ together with a maps
$$
C^yP^{\overline{x}}_{y} \tto C^{\otimes \overline{x}}= C^{x_1}\dots C^{x_n} \iff \delta_y:C^y\tto [P^{\overline{x}}_{y}, C^{\otimes \overline{x}}]
$$
satisfying the counit condition: the composite
$C^y\tto [P^{\overline{x}}_{y}, C^{\otimes \overline{x}}]\to [Id^{\overline{x}}_{y}, C^{\otimes \overline{x}}]=C^y$ 
must be the identity of $A_y$;
and the coassociativity condition: the square 
$$
\xymatrix{
C^yP^{\overline{x}}_y\left(\bigotimes_i P^{\overline{z_i}}_{x_i}\right) 
\ar[d]\ar[rr]&&
\bigotimes_i C^{x_i}P^{\overline{z_i}}_{x_i} 
\ar[d]\\
C^yP^{\overline{z_1}\dots \overline{z_n}}_y
\ar[rr]&&
C^{\otimes \overline{z_1}\dots \overline{z_n}}
}$$
must commute.

The structure maps of a coalgebra can be written as a single map $\delta:C\to P^\wedge(C)$. $P^\wedge$ has a natural projection $P^\wedge(C)\to C$ and the counit condition is equivalent to the condition that $C\to P^\wedge(C)\to C$ is the identity of $C$. 
However, $P^\wedge$ is not a comonad and the coassociativity condition cannot be expressed with $P^\wedge$ alone. 
The best we can do to rewrite the coassociativity is the commutativity of the diagram
$$\xymatrix{
C^y\ar[r]\ar[d]&[P^{\overline{x}}_{y}, C^{\otimes \overline{x}}]\ar[r]&[P^{\overline{x}}_{y}, \bigotimes_i [P^{\overline{z_i}}_{x_i}, C^{\otimes \overline{z_i}}]]\ar[d]\\
[P^{\overline{z}}_{y}, C^{\otimes \overline{z}}]\ar[r]&[P^{\overline{x}}_{y}\PP^{\overline{z}}_{\overline{x}}, C^{\otimes \overline{x}}]\ar@{=}[r]& [P^{\overline{x}}_{y}, [\PP^{\overline{z}}_{\overline{x}}, C^{\otimes \overline{z}}]]
}$$
which can also be written as
$$\xymatrix{
C\ar[d]_{\delta}\ar[r]^-{\delta}&P^\wedge(C)\ar[rr]^-{P^\wedge(\delta)}&&P^\wedge(P^\wedge(C))\ar[d]^{\alpha_{1,1}}\\
P^\wedge(C)\ar[rrr]^{\pi \Delta\epsilon}&&&(\pi\PP^\wedge\PP^\wedge\epsilon)(C).
}$$
where we have used the map $\alpha_{1,1}:(P^\wedge)^2\to \pi(\PP^\wedge)^2\epsilon$ and the comonad structure $\Delta:\PP^\wedge\to \PP^\wedge\PP^\wedge$ defined in the previous section.
We shall see in the section \ref{laxcomonad} that this structure on $C$ is that of a coalgebra over a lax comonad.

\medskip
The situation is nicer for maps and they can be characterized using only $P^\wedge$.
A {\em map of $P$-coalgebras} is a map $f:C\to D\in \cV^X$ such that the equivalent diagrams
$$
\vcenter{\xymatrix{
C^y\ar[r]\ar[d]_{f^y} & [P^{\overline{x}}_{y}, C^{\otimes \overline{x}}]\ar[d]^{[P^{\overline{x}}_{y}, f^{\overline{x}}]}\\
D^y\ar[r] &  [P^{\overline{x}}_{y}, C^{\otimes \overline{x}}]
}}
\quad\iff\quad
\vcenter{\xymatrix{
C\ar[r]\ar[d]_f & P^\wedge(C)\ar[d]^{P^\wedge(f)}\\
D\ar[r] &  P^\wedge(D)
}}
$$
commutes.

\subsection{The comonad of coendomorphisms}\label{coendomorphisms}

We recall a few properties of Kan extensions. Considering a diagram
$$\xymatrix{
\cA\ar[d]_H\ar[rr]^-F&& \cB\\
\cC\ar[rru]_-G
}$$
where $\cB$ and $\cC$ are bicomplete $\cV$-categories, the composition of functors admits both a left and a right adjoint.
The left Kan extension of $F$ along $H$ is the functor $H_!F$ such that there is a bijection between
\begin{center}
\begin{tabular}{lc}
\rule[-2ex]{0pt}{4ex} natural transformations & $F\to G\circ H$, \\
\rule[-2ex]{0pt}{4ex} and natural transformations & $H_!F\to G$.
\end{tabular}
\end{center}
The right Kan extension of $F$ along $H$ is the functor $H_*F$ such that there is a bijection between
\begin{center}
\begin{tabular}{lc}
\rule[-2ex]{0pt}{4ex} natural transformations & $G\circ H\to F$, \\
\rule[-2ex]{0pt}{4ex} and natural transformations & $G\to H_*F$.
\end{tabular}
\end{center}
If $\cB=\cC$ and $F=H$, the universal property of Kan extensions turn $F_!F$ into a comonad and $F_*F$ into a monad.
We shall put $\END^\vee(F)=F_!F$ and $\END(F)=F_*F$ and call them respectively the comonad of coendomorphisms of $F$ and the monad of endomorphisms of $F$. They have the following universal properties.
If $G$ is a comonad, there is a bijection between
\begin{center}
\begin{tabular}{lc}
\rule[-2ex]{0pt}{4ex} left $G$-comodule structure & $F\to G\circ F$, \\
\rule[-2ex]{0pt}{4ex} and comonad morphisms & $\END^\vee(F)\to G$.
\end{tabular}
\end{center}
If $G$ is a monad, there is a bijection between
\begin{center}
\begin{tabular}{lc}
\rule[-2ex]{0pt}{4ex} left $G$-module structures & $G\circ F\to F$, \\
\rule[-2ex]{0pt}{4ex} and monad morphisms & $G\to \END(F)$.
\end{tabular}
\end{center}

We shall be interested in the case where $\cA=\un$ is the category with a single objet with the monoidal unit of $\cV$ as object of endomorphisms.
A functor $\un\to \cB$ is simply an object $X$ of $\cB$. $\END^\vee(X)$ and $\END(X)$ are the comonad of coendomorphisms of $X$ and the
monad of endomorphisms of $X$. 
If $G$ is a comonad, a $G$-coalgebra structure on $X$ is the same thing as a comonad morphism $\END^\vee(X)\to G$
and if $G$ is a monad, a $G$-algebra structure on $X$ is the same thing as a monad morphism $G\to \END(X)$.

\begin{rem}
In case $\cA=\cV^A$, $\cB=\cV^B$, $\cC=\cV^C$ and $F$, $G$ and $H$ are analytic functors, there exists an analytic right Kan extension $[\HH,F]$ of $F$ along $H$. 
Recall our definition of $\HH_{\overline{x}}^{\overline{z}} = \left(\otimes_i H_{x_i}^{\overline{u_i}}\right)S(X)_{\overline{u_1}\dots \overline{u_n}}^{\overline{z}}$, then, we have bijection between
\begin{center}
\begin{tabular}{lc}
\rule[-2ex]{0pt}{4ex} natural transformations & $(G\circ H)^{\overline{z}}_y = G^{\overline{x}}_y \HH_{\overline{x}}^{\overline{z}} \to F^{\overline{z}}_y$, \\
\rule[-2ex]{0pt}{4ex} and natural transformations & $G^{\overline{x}}_y\to (H_*F)^{\overline{x}}_y = \left[ \HH_{\overline{x}}^{\overline{z}}, F^{\overline{z}}_y \right]$.
\end{tabular}
\end{center}
In the particular case where $F=H:\un\to \cV^B$ is a $B$-colored object $X$, the formula gives $\End(X)^{\overline{x}}_y = \left[ X_{\otimes \overline{x}},X_y \right]$, the operad of endomorphisms of $X$. 
%If $B$ has a single element, the formula reduces to $\END(X)[n] = \left[ X^{\otimes n},X \right]$, the usual operad of endomorphism of $X$.
The previous considerations give that, if $P$ is a $B$-colored operad, a $P$-algebra structure on $X$ is the same thing as a operad morphism $P\to \End(X)$.

\medskip
The functor $[\HH,F]$ is different from $H_*F$ since it represents the functor $G\mapsto Nat(G\circ H,F)$ only when $G$ is analytic
but there is a natural transformation $[\HH,F]\to H_*F$ which turns $[\HH,F]$ into an analytic coreflection of $H_*F$, that is, for $G$ analytic, we have a bijection between
\begin{center}
\begin{tabular}{lc}
\rule[-2ex]{0pt}{4ex} natural transformations & $G\to [\HH,F]$\\
\rule[-2ex]{0pt}{4ex} and natural transformations & $G\to H_*F$.
\end{tabular}
\end{center}

The replacement of the monad $\END(X)$ by the operad $\End(X)$ simplifies the study of algebras over operads.
The notion of coalgebra can be defined using some operad ${\rm co}\End(X)$ (see \cite{LV}) but we shall see in the next section that, for our purposes, it is the comonad $\END^\vee(X)$ which will be of use.

\end{rem}

%Let us make some remarks on an analog construction for coalgebras over an operad.
%We have bijections between
%\begin{center}
%\begin{tabular}{lc}
%\rule[-2ex]{0pt}{4ex} natural transformations & $F^{\overline{x}}_y\to P^\wedge \circ H = [P^{\overline{z}}_y,\HH^{\overline{x}}_{\overline{z}}]$\\
%\rule[-2ex]{0pt}{4ex} natural transformations & $F^{\overline{x}}_y P^{\overline{z}}_y\to \HH^{\overline{x}}_{\overline{z}}$\\
%\rule[-2ex]{0pt}{4ex} and natural transformations & $P^{\overline{z}}_y\to [F^{\overline{x}}_y,\HH^{\overline{x}}_{\overline{z}}]$
%\end{tabular}
%\end{center}
%The latter set of natural transformations has a map to morphisms of coanalytic functors
%$$
% \left[[F^{\overline{x}}_y,\HH^{\overline{x}}_{\overline{z}}], - \right] \to [P^{\overline{z}}_y, -] = P^\wedge
%$$
%but this map if not surjective in general.

\subsection{Lax comonads and their coalgebras}\label{laxcomonad}

%    \notemat{
%    definition
%    
%    example de $\PP^\wedge$
%    
%    coalgèbres sous lax comonades
%    
%    example des $P$-coalgèbres
%    
%    csq de la coreflexion
%    
%    construction de la coreflexion
%    
%    }

Recall that $\Delta_+$, the category of finite (possibly empty) ordinals is a monoidal category for the ordinal sum. 
Recall also that, if $(C,\otimes)$ is a monoidal category, a monoidal functor $\Delta_+\to (\cD,\otimes)$ is the data of a monoid in $C$ and 
that a monoidal functor $\Delta_+^{op}\to (\cD,\otimes)$ is the data of a comonoid in $C$. We shall cal a {\em lax comonoid} a lax monoidal functor $\Delta_+^{op}\to (\cD,\otimes)$. A morphism of lax comonoids is a lax monoidal natural transformation.
The category of comonoids embeds fully faithfully in the category of lax comonoids.
In case $\cD=(END(\cC),\circ)$ is the category of endofunctors of a category $\cC$, we shall call a lax monoidal functor $\Delta_+\to END(\cC)$ a {\em lax comonad}.

\medskip
Recall the functors $\pi$ and $\epsilon$ from section \ref{piepsilon}.
Postcomposing by $\pi$ and precomposing by $\epsilon$ creates a functor $END(\cV^{S(X)})\to END(\cV^X)$ which has no compatibility in general with the monoidal structures (we would need for that a comparison between $\epsilon\pi $ and the identity but there is none).
Therefore, the image of a monoid or a comonoid in $END(\cV^{S(X)})$ is not in general a monoid of a comonoid in $END(\cV^{X})$.
However, for the functors $\PP$ and $\PP^\wedge$ we can say something.

The formula $\pi\PP^n\epsilon =(P^\sim)^n$ says precisely that the monad $\PP$ is send to a monad $P^\sim$, \ie that the composite functor
$$
\begin{array}{ccccc}
\Delta_+ &\tto & [\cV^{S(X)},\cV^{S(X)}] &\tto & [\cV^X,\cV^X]\\
n &\mto & \PP^n &\mto & \pi\PP^n\epsilon
\end{array}
$$
is a monoidal functor.
The situation is more complex for the functors $P^\wedge$ and $\PP^\wedge$ but it justifies the definition of a lax comonad.
\begin{prop}\label{laxcomonP}
The maps $\alpha_{n,m}:\pi(\PP^\wedge)^n\epsilon\pi(\PP^\wedge)^m\epsilon\to \pi(\PP^\wedge)^{n+m}\epsilon$
endow the composite functor 
$$
\begin{array}{ccccc}
\Delta_+^{op} &\tto & [\cV^{S(X)},\cV^{S(X)}] &\tto & [\cV^X,\cV^X]\\
n &\mto & (\PP^\wedge)^n &\mto & \pi(\PP^\wedge)^n\epsilon
\end{array}
$$
with a lax monoidal structure.
\end{prop}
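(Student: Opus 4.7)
The plan is to verify, axiom by axiom, that the data $F(n) = \pi(\PP^\wedge)^n\epsilon$ together with the maps $\alpha_{n,m}$ defines a lax monoidal functor from $(\Delta_+^{op},+,0)$ to $(END(\cV^X),\circ,Id)$. A large part of the construction is already in place: since $\PP^\wedge$ is a genuine comonad on $\cV^{S(X)^{op}}$, the assignment $n\mapsto(\PP^\wedge)^n$ is itself a (strict) monoidal functor $\Delta_+^{op}\to END(\cV^{S(X)^{op}})$, and the $\alpha_{n,m}$ have been exhibited in the discussion preceding the proposition as iterated instances of the tensor-hom interchange $\bigotimes_i[A_i,B_i]\to[\bigotimes_iA_i,\bigotimes_iB_i]$ combined with the Yoneda identifications that unfold the Day product defining $\PP$.

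First I would check functoriality. Whiskering the cosimplicial comonad $(\PP^\wedge)^\bullet$ on the left by $\pi$ and on the right by $\epsilon$ preserves natural transformations, so it produces a functor $F:\Delta_+^{op}\to END(\cV^X)$. The strict unit $F(0)=\pi\epsilon=Id$ is given by the relation $\pi\epsilon=id$ already recorded among the compatibilities between $\pi$, $\epsilon$ and the operadic functors, and so the lax unit constraint $Id\to F(0)$ can be taken to be the identity.

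Next I would verify naturality of $\alpha_{n,m}$ in both $n$ and $m$, i.e.\ that for each morphism $[n']\to[n]$ or $[m']\to[m]$ in $\Delta_+$ the corresponding square of natural transformations commutes. This reduces to the naturality of the tensor-hom comparison, together with the naturality of the comultiplication and counit of $\PP^\wedge$ and the functoriality of the Day product on $\PP$; each ingredient being a natural transformation for formal reasons, the squares commute. The unit axioms $\alpha_{n,0}=\alpha_{0,n}=Id$ are also transparent, because when one of the indices vanishes the tensor-hom interchange has no nontrivial factors to combine and reduces to the identity.

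The main obstacle is the associativity axiom, namely the equality
\[
\alpha_{n+m,p}\circ(\alpha_{n,m}\cdot Id)\;=\;\alpha_{n,m+p}\circ(Id\cdot\alpha_{m,p})\;:\;F(n)F(m)F(p)\tto F(n+m+p).
\]
Each $\alpha_{n,m}$ unfolds into a nested composite of tensor-hom comparisons interleaved with Yoneda identifications of the form $\bigotimes_iC^{\otimes\overline{z_i}}=[S(X)^{\overline{u}}_{\overline{z_1}\dots\overline{z_N}},C^{\otimes\overline{u}}]$ and collapses of Day products analogous to those displayed just before the proposition. Both composites unfold on the same indexed family of factors; the equality then follows from the coherence of the tensor-hom interchange in a symmetric monoidal closed $\cV$-category together with the (strict) coassociativity of $\PP^\wedge$. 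No ingredient beyond what has already been introduced is needed, but the verification is notationally heavy: the difficulty is essentially bookkeeping, making sure the two ways of grouping the three factors produce the same sequence of interchanges and Yoneda reductions.
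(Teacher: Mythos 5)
Your proposal is correct and follows essentially the same route as the paper: the unit condition is dispatched by $\pi\epsilon = id$, and the associativity axiom is reduced to the coherence of the iterated tensor--hom interchange $\bigotimes_i[A_i,B_i]\to[\bigotimes_iA_i,\bigotimes_iB_i]$, which is exactly the commuting square the paper exhibits as the core of its (equally terse) argument. The only difference is that you spell out a few more of the routine verifications (naturality in $n,m$, the unit axioms) that the paper leaves implicit.
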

\begin{proof}
The condition on the unit is trivial since $\pi (Id) \epsilon = Id$. 
We need only to prove the associativity condition
$$\xymatrix{
\pi(\PP^\wedge)^n\epsilon\pi(\PP^\wedge)^m\epsilon\pi(\PP^\wedge)^\ell\epsilon \ar[r]\ar[d] & \pi(\PP^\wedge)^{n+m}\epsilon\pi(\PP^\wedge)^\ell\epsilon \ar[d] \\
\pi(\PP^\wedge)^n\epsilon\pi(\PP^\wedge)^{m+\ell}\epsilon \ar[r] & \pi(\PP^\wedge)^{n+m+\ell}\epsilon.
}$$
This is a straightforward computation, essentially the argument reduces to the commutation of
$$\xymatrix{
\otimes_i[A_i,\otimes_{j_i}[B_{j_i},C_{j_i}]] \ar[r]\ar[d] &  [\otimes_iA_i,\otimes_i\otimes_{j_i}[B_{j_i},C_{j_i}]] \ar[d] \\
\otimes_i[A_i,[\otimes_{j_i}B_{j_i},\otimes_{j_i}C_{j_i}]]  \ar[r] & [\otimes_iA_i,[\otimes_i\otimes_{j_i}B_{j_i},\otimes_i\otimes_{j_i}C_{j_i}]].
}$$
\end{proof}

The functor of proposition \ref{laxcomonP} is then a lax comonad.
With a slight abuse in notation, we shall call again $P^\wedge$ this functor.
If $Q$ is a lax comonad, we shall use the classical convention for simplicial objects and denote is components by $Q_n$. 
In particular, the functors $\pi(\PP^\wedge)^n\epsilon$ shall be abbreviated $P^\wedge_n$.

\bigskip
We now establish a few results on lax comonads and their coalgebras.

\medskip
Let $Q$ be a lax comonad on a category $\cC$, a {\em coalgebra over $Q$} is defined as an object $C\in \cC$ together with a map $\delta_C:C\to Q_1(C)$ such that
\begin{itemize}
\item (Unitality) the composition $C\to Q_1(C)\to C$ is the identity of $C$
\item (Coassociativity) the following diagram commutes
$$\xymatrix{
C\ar[rr]^-{\delta}\ar[d]_-{\delta} && Q_1(C)\ar[rr]^-{Q_1(\delta)} && Q_1Q_1(C)\ar[d]^{\alpha_{1,1}}\\
Q_1(C)\ar[rrrr]_{\Delta(C)} && && Q_2(C)
}$$
\end{itemize}

If $C$ and $D$ are two $Q$-coalgebras, a morphism of $Q$-coalgebras is defined as a map $f:C\to D\in \cC$ such that the following diagram commutes
$$\xymatrix{
C\ar[rr]^-{\delta_C} \ar[d]_f&& Q_1(C)\ar[d]^-{Q_1(f)}\\
D\ar[rr]_-{\delta_D} && Q_1(D)
}$$

We shall say that a $Q$-coalgebra $C$ is {\em cofreely generated by $C\to V$} (or simply {\em cofree}) if 
for any $Q$-coalgebra $D$, the maps $C\to V$ induces a bijection between
\begin{center}
\begin{tabular}{lc}
\rule[-2ex]{0pt}{4ex} $Q$-coalgebra morphisms & $D\to C$\\
\rule[-2ex]{0pt}{4ex} and maps in $\cV^X$  & $D\to V$.
\end{tabular}
\end{center}

The following lemma is a direct consequence of the caracterisation of $P$-coalgebras of section \ref{coalgebras}. It also justifies the previous definition.
\begin{lemma}
A $P$-coalgebra structure on $C$ is the same thing as a coalgebra structure on the lax comonad $P^\wedge$.
\end{lemma}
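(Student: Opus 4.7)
The proof is a matter of unwinding definitions and matching the two coassociativity diagrams. The plan is as follows.

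First, I would identify the two pieces of data. By the formula $\pi\PP^\wedge\epsilon = P^\wedge$ established in section \ref{piepsilon}, the first component of the lax comonad is $P^\wedge_1 = P^\wedge$, so a structure map $\delta\colon C\to P^\wedge_1(C)$ for a $P^\wedge$-coalgebra is literally the same as a map $\delta\colon C\to P^\wedge(C)$, which by section \ref{coalgebras} is exactly the datum of a family $\delta_y\colon C^y\to [P^{\overline{x}}_y, C^{\otimes\overline{x}}]$ underlying a $P$-coalgebra structure on $C$.

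Next, I would check that the counit conditions coincide. The augmentation $P^\wedge_1\to P^\wedge_0 = \mathrm{id}$ in the simplicial structure on $(\pi(\PP^\wedge)^n\epsilon)_n$ is induced by the counit $\PP^\wedge\to \mathrm{id}$, which under the identification $\pi\PP^\wedge\epsilon = P^\wedge$ becomes the canonical projection $P^\wedge(C)\to C$ coming from the operadic unit $Id^{\overline{x}}_y\to P^{\overline{x}}_y$. Therefore the unitality axiom for a $P^\wedge$-coalgebra (i.e. that $C\to P^\wedge_1(C)\to C$ is the identity) is precisely the counit axiom for a $P$-coalgebra.

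Finally, and this is the only step requiring any care, I would match the two coassociativity diagrams. The comultiplication $\Delta\colon P^\wedge_1\to P^\wedge_2$ in the lax comonad is by construction $\pi\Delta\epsilon$, where $\Delta\colon \PP^\wedge\to(\PP^\wedge)^2$ is the genuine comonad comultiplication of $\PP^\wedge$; and the map $\alpha_{1,1}\colon P^\wedge_1 P^\wedge_1\to P^\wedge_2$ is literally the map constructed in section \ref{piepsilon}. With these identifications the coassociativity square of a $P^\wedge$-coalgebra,
$$\xymatrix{
C\ar[r]^-{\delta}\ar[d]_-{\delta} & P^\wedge_1(C)\ar[r]^-{P^\wedge_1(\delta)} & P^\wedge_1 P^\wedge_1(C)\ar[d]^{\alpha_{1,1}}\\
P^\wedge_1(C)\ar[rr]_-{\pi\Delta\epsilon} & & P^\wedge_2(C),
}$$
reproduces verbatim the diagram derived in section \ref{coalgebras} as the best reformulation of $P$-coalgebra coassociativity; conversely, that reformulation was shown there to be equivalent to the original coassociativity square of the operadic coaction, so the two coassociativity conditions are equivalent. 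The main (and only) obstacle is this last identification of the lax simplicial face $\Delta\colon P^\wedge_1\to P^\wedge_2$ with $\pi\Delta\epsilon$, which follows from the explicit description of the lax monoidal structure built in proposition \ref{laxcomonP}.

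Since morphisms of $P$-coalgebras and of $P^\wedge$-coalgebras are both characterized by commutation of the single square involving $P^\wedge(f)$ (cf. the end of section \ref{coalgebras}), the equivalence extends to morphisms, giving an isomorphism of categories.
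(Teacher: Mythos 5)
Your proposal is correct and takes the same route as the paper: the paper's own justification is precisely that the lemma is a direct consequence of the reformulation of the $P$-coalgebra axioms given in section \ref{coalgebras}, where the coassociativity square was already rewritten as the diagram involving $P^\wedge(\delta)$, $\alpha_{1,1}$ and $\pi\Delta\epsilon$ --- i.e.\ verbatim the coassociativity axiom for a coalgebra over the lax comonad $P^\wedge$. You simply spell out the identifications (of $P^\wedge_1$ with $P^\wedge$, of the counits, of $\Delta$ with $\pi\Delta\epsilon$, and of the morphism conditions) that the paper leaves implicit.
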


\medskip
Recall $\END^\vee(C)$, the comonad of coendomorphisms of $C$, from section \ref{coendomorphisms}.
\begin{lemma}\label{coendcoalg}
Let $Q$ be a lax comonad, then a  $Q$-coalgebra structure on $C$ is the same thing as a lax comonad morphism $\END^\vee(C)\to Q$.
\end{lemma}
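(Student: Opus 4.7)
The plan is to unwind what a lax comonad morphism $\phi:\END^\vee(C)\to Q$ amounts to and match it, piece by piece, with the data of a $Q$-coalgebra structure on $C$. Such a $\phi$ is a monoidal natural transformation between two lax monoidal functors $\Delta_+^{op}\to[\cC,\cC]$, so it consists of natural transformations $\phi_n:\END^\vee(C)^n\to Q_n$ for every $n\geq 0$ satisfying (a) compatibility with the simplicial maps induced by morphisms of $\Delta_+$ (in particular the counit $Q_1\to Q_0$ and the comultiplication $\Delta^Q:Q_1\to Q_2$) and (b) compatibility with the lax monoidal comparisons $\alpha^Q_{n,m}:Q_nQ_m\to Q_{n+m}$. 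Because $\END^\vee(C)$ is a strict comonad, its own comparisons $\END^\vee(C)^n\circ\END^\vee(C)^m=\END^\vee(C)^{n+m}$ are identities, so (b) reads $\phi_{n+m}=\alpha^Q_{n,m}\circ(\phi_n*\phi_m)$; hence every $\phi_n$ is forced by $\phi_1$, and $\phi_0$ is forced to be the unit $Id\to Q_0$ of $Q$.

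Next I would invoke the universal property of the coendomorphism comonad from section \ref{coendomorphisms} to convert the remaining datum $\phi_1:\END^\vee(C)\to Q_1$ into a map $\delta=\phi_1(C)\circ\eta_C:C\to Q_1(C)$, where $\eta_C:C\to\END^\vee(C)(C)$ is the unit of the Kan extension $\END^\vee(C)=C_!C$. This is already a bijection between natural transformations $\END^\vee(C)\to Q_1$ and maps $C\to Q_1(C)$ in $\cC$, by the very definition of $\END^\vee(C)$.

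It then remains to match the two remaining simplicial compatibilities on $\phi$ with the two $Q$-coalgebra axioms on $\delta$. Compatibility with the counit $Q_1\to Q_0$, evaluated at $C$ and precomposed with $\eta_C$, reduces — using $\epsilon^{\END^\vee(C)}_C\circ\eta_C=\mathrm{id}_C$, a further consequence of the universal property — to the unitality of $\delta$. Compatibility with the comultiplication $\Delta^Q:Q_1\to Q_2$ reads $\Delta^Q\circ\phi_1=\phi_2\circ\Delta^{\END^\vee(C)}$; substituting $\phi_2=\alpha^Q_{1,1}\circ(\phi_1*\phi_1)$, evaluating at $C$, and precomposing with $\eta_C$ is precisely the coassociativity diagram for $\delta$ displayed in section \ref{coalgebras}.

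The hard part will be this last identification: verifying that, via the universal property, the whiskering $(\phi_1*\phi_1)\circ\Delta^{\END^\vee(C)}$ evaluated at $\eta_C$ reproduces the iterated coaction $Q_1(\delta)\circ\delta:C\to Q_1(Q_1(C))$. This will follow from characterizing the comonad comultiplication $\Delta^{\END^\vee(C)}$ as the universal iteration of the coaction of $\END^\vee(C)$ on $C$, a property baked into the Kan-extension definition of $\END^\vee(C)$. Once this is done, both constructions — from $\delta$, build $\phi_n$ by iterating $\phi_1$ with the comparisons $\alpha^Q$; from $\phi$, extract $\phi_1$ and apply the universal property to recover $\delta$ — are manifestly mutually inverse, yielding the claimed bijection.
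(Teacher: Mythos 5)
Your proposal is correct and follows essentially the same route as the paper: both use the universal property of $\END^\vee(C)=C_!C$ to identify $\phi_1$ with $\delta:C\to Q_1(C)$, observe that the higher components are forced by the lax monoidal compatibility $\phi_{n+m}=\alpha^Q_{n,m}\circ(\phi_n*\phi_m)$, and reduce the simplicial naturality to the single nontrivial square relating the comultiplication of $\END^\vee(C)$ to the coassociativity of $\delta$. The paper's proof is terser (it builds the $\gamma_n$ from $\delta$ and checks naturality in $n$ by decomposing arrows of $\Delta_+$ into faces and degeneracies), but the content is the same.
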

\begin{proof}
A lax comonad morphism $\END^\vee(C)\to Q$ is a family of natural transformations $\gamma_n:\END^\vee(C)^n\to Q_n$ such that the squares
$$\xymatrix{
\END^\vee(C)^n\circ \END^\vee(C)^m\ar@{=}[d]\ar[rr]&& Q_n \circ Q_m\ar[d]\\
\END^\vee(C)^{n+m} \ar[rr]&& Q_{n+m}
}$$
commute.

The map $C\to Q_1(C)$ give a map $\END^\vee(C)\to Q_1$ from which we deduce maps $\gamma_n:\END^\vee(C)^n\to (Q_1)^n\to Q_n$. 
It is clear that the above square commute, we need only to check that the $\gamma_n$ are natural in $n$.
Decomposing arrows in $\Delta_+$ in faces and degeneracies, the only non-trivial condition is the commutation of the square
$$\xymatrix{
\END^\vee(C)\ar[d]\ar[rr]^-{\gamma_1}&& Q_1\ar[d]\\
\END^\vee(C)^2 \ar[r]&Q_1Q_1\ar[r]& Q_2
}$$
which is a consequence of the coassociativity condition on $C$.
\end{proof}

\medskip
Let $\cD$ be a monoidal category. We shall say that a lax comonoid $Q:\Delta_+\to \cD$ has a {\em coreflection into comonoids} is there exist a comonoid $Q'$ and a lax comonoid morphism $Q'\to Q$ which induces, for any comonoid $R$, a bijection between lax comonoid morphisms $R\to Q$ and comonoid morphisms $R\to Q'$.

\begin{prop}\label{propcoref}
Let $Q$ be a lax comonad on a category $\cC$ having a coreflection $Q'$ into comonads, then
\begin{enumerate}
\item there is an equivalence of categories between $Q$-coalgebras and $Q'$-coalgebras;
\item the cofree $Q$-coalgebras coincide with the cofree $Q'$-coalgebras;
\item and the category of $Q$-coalgebras is comonadic over $\cC$.
\end{enumerate}
\end{prop}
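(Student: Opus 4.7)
The plan is to prove all three parts by combining Lemma \ref{coendcoalg} with the universal property of the coreflection. First I would define a functor $F\colon Q'\text{-Coalg}\to Q\text{-Coalg}$ by restriction of scalars along the lax comonad comparison $\phi\colon Q'\to Q$: on objects $(C,\delta')\mapsto (C,\phi_1(C)\circ\delta')$, and on a morphism $f$ (which is just a map in $\cC$) I set $Ff=f$. Naturality of $\phi_1$ together with $\phi$ being a lax comonoid morphism transfers counitality, coassociativity and the morphism condition from $Q'$-coalgebras to $Q$-coalgebras. The functor $F$ is tautologically faithful, so the three assertions reduce to showing $F$ is an equivalence.

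For essential surjectivity, given a $Q$-coalgebra $(C,\delta_C)$, Lemma \ref{coendcoalg} yields a lax comonad morphism $\tilde\delta\colon \END^\vee(C)\to Q$. Since $\END^\vee(C)$ is a genuine comonad, the coreflection produces a unique comonad morphism $\tilde\delta'\colon \END^\vee(C)\to Q'$ with $\phi\circ\tilde\delta'=\tilde\delta$. By the universal property of $\END^\vee(C)$ recalled in section \ref{coendomorphisms}, $\tilde\delta'$ corresponds to a $Q'$-coalgebra structure $\delta'_C$ on $C$, and the level-one component of $\phi\circ\tilde\delta'=\tilde\delta$ reads $\phi_1(C)\circ\delta'_C=\delta_C$; hence $F(C,\delta'_C)=(C,\delta_C)$.

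For fullness I would extend Lemma \ref{coendcoalg} from objects to arrows. A map $f\colon C\to D$ in $\cC$ is the same as a functor $\bar f\colon \deux\to\cC$ from the walking arrow $\deux=\{0\to 1\}$, and its left Kan extension $\END^\vee(\bar f):=\bar f_!\bar f$ is a comonad on $\cC$. By the universal property of Kan extensions, comonad morphisms $\END^\vee(\bar f)\to R$ correspond to left $R$-comodule structures on $\bar f$, which unpack to pairs of $R$-coalgebra structures on $C$ and $D$ for which $f$ is an $R$-coalgebra morphism; the proof of Lemma \ref{coendcoalg} is formal in the domain of the functor and gives the analogous identification for lax comonad morphisms $\END^\vee(\bar f)\to Q$. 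Applying the coreflection at $R=\END^\vee(\bar f)$ then equates $Q$-coalgebra morphisms $F(C,\delta'_C)\to F(D,\delta'_D)$ with $Q'$-coalgebra morphisms $(C,\delta'_C)\to (D,\delta'_D)$, which is fullness.

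Parts (2) and (3) then follow from (1). The cofree $Q$-coalgebra on $V\in\cC$ represents a functor factoring through the forgetful $Q\text{-Coalg}\to \cC$, and since $F$ is an equivalence commuting with the two forgetful functors (both act as the identity on underlying objects), the representing object coincides with the cofree $Q'$-coalgebra on $V$. For (3), $Q'$-coalgebras agree with coalgebras over the strict comonad $Q'_1$ (because $Q'_n=(Q'_1)^n$ for a genuine comonad), and the latter category is comonadic over $\cC$ by the usual Eilenberg--Moore construction; comonadicity transports across $F$. The main obstacle throughout is the fullness step: naively cancelling $\phi_1(D)$ from the $Q$-morphism equation would demand that $\phi$ be componentwise monic, which the abstract coreflection property does not supply, and the $\END^\vee(\bar f)$ construction is precisely what circumvents this.
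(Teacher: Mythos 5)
Your proposal is correct and follows the paper's route on objects: both you and the paper obtain the bijection between $Q$-coalgebra structures and $Q'$-coalgebra structures on a fixed $C$ by composing Lemma \ref{coendcoalg} with the universal property of the coreflection applied to the genuine comonad $\END^\vee(C)$, and both deduce (2) and (3) from (1) in essentially the same way (your detour through Eilenberg--Moore coalgebras for $Q'_1$ is equivalent to the paper's remark that $Q'$ provides a right adjoint to the forgetful functor). Where you genuinely diverge is on morphisms. The paper disposes of this with one sentence --- ``the notion of morphisms coincides because $Q'\to Q$ is a natural transformation'' --- which, as you correctly observe, only yields one implication: naturality of $\phi_1$ shows that every $Q'$-coalgebra morphism is a $Q$-coalgebra morphism, but the converse requires cancelling $\phi_1(D)$ in the identity $\phi_1(D)\circ Q'_1(f)\circ\delta'_C=\phi_1(D)\circ\delta'_D\circ f$, which needs $\phi_1(D)$ to be monic. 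That hypothesis is not part of the abstract statement (it does hold in the paper's application, where $Q_\infty\to P^\wedge$ is a countable intersection of monomorphisms by Lemma \ref{Qmono}, so the paper's argument is repairable there). Your replacement --- running the $\END^\vee$ construction on the functor $\bar f\colon\deux\to\cC$ classifying the arrow, identifying lax comonad morphisms $\END^\vee(\bar f)\to Q$ with pairs of coalgebra structures making $f$ a morphism, and then invoking the coreflection once more --- is a clean, purely formal fix that proves the proposition at the stated level of generality; the only points worth spelling out are that $\bar f_!\bar f$ exists (bicompleteness of $\cC$, as assumed in section \ref{coendomorphisms}) and that the bijection for $\END^\vee(\bar f)$ restricts, along the comonad maps $\END^\vee(C)\to\END^\vee(\bar f)$ and $\END^\vee(D)\to\END^\vee(\bar f)$ induced by whiskering with the two objects of $\deux$, to the object-level bijections, so that the structures you recover on $C$ and $D$ are the ones produced by $F$.
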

\begin{proof}
(1) Lemma \ref{coendcoalg} proves that coassociative maps $C\to Q(C)$ and $C\to Q'(C)$ are in bijection. 
We need only to prove that the notion of morphisms coincides, but this is because $Q'\to Q$ is a natural transformation.

(2) is a consequence of (1).

(3) By (2) the forgetful functor $U:Q\Coalg\to \cC$ has a right adjoint given by $Q'$, the comonadicity is then the statement (1).
\end{proof}

\medskip
Proposition \ref{propcoref} applies in particular to the lax comonad $P^\wedge$ of an operad $P$. The purpose of the next chapter is to prove that $P^\wedge$ admits a comonadic coreflection.

\subsection{The coreflection theorem}\label{sectioncoreflection}

We are going to construct the comonadic coreflection $P^\vee$ of the lax comonad $P^\wedge$ recursively. 
The method will use specific properties of $P^\wedge$ as well as conditions of the monoidal category $\cV$ and seems difficult to generalize although the principle guiding it looks quite general.
%
%The data of a comonad structure on a functor $Q$ is equivalent to a 2-truncated augmented simplicial diagram
%
%$$
%\xymatrix{
%Id
%&\ar[l]P^\vee\ar@{-->}[r]
%&\ar@<1ex>[l]\ar@<-1ex>[l](P^\vee)^2\ar@<1ex>@{-->}[r]\ar@<-1ex>@{-->}[r]
%&\ar@<2ex>[l]\ar[l]\ar@<-2ex>[l](P^\vee)^3
%}$$

Let us consider the following diagram whose terms we are going to explain.
\begin{align}
\tag{D}\label{maindiagram}
\vcenter{
\xymatrix{
P^\vee\ar@{-->}[d]\ar@{-->}[r]&(P^\vee)^2\ar@{-->}[d]\ar@<.6ex>@{-->}[r]\ar@<-.6ex>@{-->}[r]&(P^\vee)^3\ar@{-->}[d]\ar@<1.2ex>@{-->}[r]\ar@{-->}[r]\ar@<-1.2ex>@{-->}[r]&(P^\vee)^4\ar@{-->}[d]\ar@{-->}@<1.8ex>[r]\ar@{-->}@<.6ex>[r]\ar@{-->}@<-.6ex>[r]\ar@{-->}@<-1.8ex>[r] &(P^\vee)^5\ar@{-->}[d] & \dots \\
\vdots\ar@{..>}[d]&\vdots\ar@{-->}[d]&\vdots\ar@{-->}[d]&\vdots\ar@{-->}[d]&\vdots\ar@{-->}[d]&\\
Q_4\ar@{..>}[d]\ar@{..>}[r]&(Q_3)^2\ar@{-->}[d]\ar@<.6ex>@{-->}[r]\ar@<-.6ex>@{-->}[r]\ar@{}[rd]|{(4'')}&(Q_2)^3\ar@{-->}[d]\ar@<1.2ex>@{-->}[r]\ar@{-->}[r]\ar@<-1.2ex>@{-->}[r]\ar@{}[rd]|{(4')}&(Q_1)^4\ar@{-->}[d]\ar@{-->}@<1.8ex>[r]\ar@{-->}@<.6ex>[r]\ar@{-->}@<-.6ex>[r]\ar@{-->}@<-1.8ex>[r] \ar@{}[rdddd]|{(4)}&(P^\wedge)^5\ar[dddd] & \dots\\
Q_3\ar@{..>}[d]\ar@{..>}[r]&(Q_2)^2\ar@{-->}[d]\ar@<.6ex>@{-->}[r]\ar@<-.6ex>@{-->}[r]\ar@{}[rd]|{(3')}&(Q_1)^3\ar@{-->}[d]\ar@<1.2ex>@{-->}[r]\ar@{-->}[r]\ar@<-1.2ex>@{-->}[r]\ar@{}[rddd]|{(3)}&(P^\wedge)^4\ar[ddd]&&\\
Q_2\ar@{..>}[d]_{\chi_2}\ar@{..>}[r]^{\delta_2}&(Q_1)^2\ar@{-->}[d]^{(\chi_1)^2}\ar@<.6ex>@{-->}[r]\ar@<-.6ex>@{-->}[r]\ar@{}[rdd]|{(2)}&(P^\wedge)^3\ar[dd]& & \\
Q_1\ar@{..>}[d]_{\chi_1}\ar@{..>}[r]^{\delta_1}&(P^\wedge)^2 \ar[d]_{\alpha_{1,1}}& & &\\
P^\wedge\ar[r]_{\Delta} & P^\wedge_2\ar@<.6ex>[r]^{\Delta^{(1)}}\ar@<-.6ex>[r]_{\Delta^{(2)}} & P^\wedge_3\ar@<1.2ex>[r]\ar[r]\ar@<-1.2ex>[r] & P^\wedge_4\ar@<1.8ex>[r]\ar@<.6ex>[r]\ar@<-.6ex>[r]\ar@<-1.8ex>[r]& P^\wedge_5 & \dots
}}
\end{align}

The plain arrows are the structure maps of $P^\wedge$. 
The bottom row is the simplicial diagram of $P^\wedge$ but we have represented only the degeneracy maps (the diagonals of the comonad structure). 
The vertical plain arrows are the lax structure of $P^\wedge$.
The dotted arrows and the objects $Q_k$ will be defined by some limits. 
Finally, the dashed arrows will be constructed from the dotted ones.

\medskip

The functor $P^\wedge$ has a counit $P^\wedge \to Id$ but is missing a diagonal.
The construction we are going to describe focus on how to build such a diagonal. The counit will be dealt with afterwards.
The general idea is the following. We would like to have a diagonal map $P^\wedge\to P^\wedge P^\wedge$ lifting the diagonal $\Delta : P^\wedge\to P^\wedge_2$
$$\xymatrix{
&(P^\wedge)^2 \ar[d]^{\alpha_{1,1}} &\\
P^\wedge\ar[r]_{\Delta} \ar@{-->}[ru]^-{?}& P^\wedge_2.
}$$
We can consider the universal domain for the existence of such a lift, that is the fiber product $Q_1$ of  $(P^\wedge)^2\to P^\wedge_2 \leftarrow P^\wedge$,
but this is not enough since we want the diagonal $Q_1\to (P^\wedge)^2$ to lift into $Q_1\to (Q_1)^2$
$$\xymatrix{
&(Q_1)^2\ar[d]\\
Q_1\ar@{-->}[ru]^-{?}\ar[r]^-{\delta_1}\ar[d]&(P^\wedge)^2 \ar[d]^{\alpha_{1,1}} &\\
P^\wedge\ar[r]_{\Delta} & P^\wedge_2.
}$$
So we consider $Q_2$ the fiber product of $(Q_1)^2\to (P^\wedge)^2 \leftarrow Q_1$ and etc. ad infinitum.
Let $Q_\infty$ be the limit of the $Q_k$s, it is unfortunately false that the diagonal maps $Q_{k+1}\to (Q_k)^2$ will define a diagonal $\delta:Q_\infty\to (Q_\infty)^2$. But this will be true if $(Q_\infty)^2$ is proven to be the limit of the tower of $(Q_k)^2$. This will be the main issue at hand, addressed in lemma \ref{mainlemma}.

Another issue is that the diagonal thereby created has no reason to be coassociative. To ensure this fact we need to defined $Q_2$ as the limit of the bigger diagram
$$\xymatrix{
&(Q_1)^2\ar[d]_{(\chi_1)^2}\ar@<.6ex>[rr]^{\delta_1\chi_1}\ar@<-.6ex>[rr]_{\chi_1\delta_1}&&(P^\wedge)^3\ar[dd]\\
Q_1\ar[d]_{\chi_1}\ar[r]^{\delta_1}&(P^\wedge)^2 \ar[d] &\\
P^\wedge\ar[r]_{\Delta} & P^\wedge_2\ar@<.6ex>[rr]^{\Delta^{(1)}}\ar@<-.6ex>[rr]_{\Delta^{(2)}} && P^\wedge_3;
}$$
and $Q_3$ as the limit of 
$$\xymatrix{
&(Q_2)^2\ar[d]\ar@<.6ex>[r]\ar@<-.6ex>[r]&(Q_1)^3\ar[d]\ar@<1.2ex>[r]\ar[r]\ar@<-1.2ex>[r]&(P^\wedge)^4\ar[ddd]\\
Q_2\ar[d]_{\chi_2}\ar[r]^{\delta_2}&(Q_1)^2\ar[d]\ar@<.6ex>[r]\ar@<-.6ex>[r]&(P^\wedge)^3\ar[dd]\\
Q_1\ar[d]_{\chi_1}\ar[r]^{\delta_1}&(P^\wedge)^2 \ar[d]& \\
P^\wedge\ar[r]_{\Delta} & P^\wedge_2\ar@<.6ex>[r]^{\Delta^{(1)}}\ar@<-.6ex>[r]_{\Delta^{(2)}} & P^\wedge_3\ar@<1.2ex>[r]\ar[r]\ar@<-1.2ex>[r] & P^\wedge_4
}$$
etc.\footnote{In fact, the diagram defining $Q_k$ can be simplified (see remark \ref{rem2trunc})}
This way, at the limit, we will have a full simplical structure on the $(Q_\infty)^n$ (taking into account the non-drawn face maps), turning $Q_\infty$ into a comonad. Moreover, by construction, we will have a canonical morphism of lax comonads $Q_\infty\to P^\wedge$. 
Proposition \ref{mainthm1} will prove that this morphism is the coreflexion.

The last issue is to construct the arrows of the diagrams defining the $Q_i$s, that is the dashed arrows of the diagram (\ref{maindiagram}).
We put $Q_0=P^\wedge$ and we call $\chi_n$ the canonical map $Q_n\to Q_{n-1}$. 
Then, the dashed vertical arrows $(Q_n)^k\to (Q_{n-1})^k$ are simply tensor products (for the composition of functors) of $\chi_n$s.
The dashed horizontal arrows, named $\delta_k^{(n,i)}:(Q_k)^n\to (Q_{k-1})^{n+1}$, are constructed from the two maps $\chi_k:Q_k\to Q_{k-1}$ and $\delta_k:Q_k\to (Q_{k-1})^2$ by the formula
$$
\delta_k^{(n,i)}:=\chi_k\dots \delta_k\dots \chi_k:(Q_k)^n\to (Q_{k-1})^{n+1}
$$
where $\delta_k$ is applied on the $i$-th factor.

Each $\delta_k^{(n,i)}$ is defined over a degeneracy map $\Delta^{(i)}:P^\wedge_n\to P^\wedge_{n+1}$.
The following lemmas ensure that the dashed arrows commute with the plain arrows.

\begin{lemma}\label{laxmap1}
The following squares (denoted $(n)$ in the diagram) commute
$$\xymatrix{
(Q_1)^n\ar[rr]^{\delta_1^{(n,i)}} \ar[d]_{\alpha(\chi_1)^n}&& (P^\wedge_1)^{n+1}\ar[d]^-{\alpha_{1,\dots,1}}\\
P^\wedge_n\ar[rr]^{\Delta^{(i)}} && P^\wedge_{n+1}.
}$$
\end{lemma}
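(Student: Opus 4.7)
The plan is to reduce commutativity of the square to three ingredients: associativity of the lax monoidal structure $\alpha$ of $P^\wedge$, the defining pullback relation on $Q_1$, and coherence of the lax monoidal functor $P^\wedge:\Delta_+^{op}\to\mathrm{End}(\cV^X)$ relating the higher degeneracies $\Delta^{(i)}$ to the basic comultiplication $\Delta:P^\wedge\to P^\wedge_2$.

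First I would unpack the definition $\delta_1^{(n,i)}=\chi_1\otimes\cdots\otimes\delta_1\otimes\cdots\otimes\chi_1$ with $\delta_1:Q_1\to(P^\wedge)^2$ placed at the $i$-th position. By iterated associativity of $\alpha$, the multiplication $\alpha_{1,\ldots,1}:(P^\wedge)^{n+1}\to P^\wedge_{n+1}$ factors as the partial multiplication $\mathrm{id}^{\otimes(i-1)}\otimes\alpha_{1,1}\otimes\mathrm{id}^{\otimes(n-i)}$ into $(P^\wedge)^{i-1}\otimes P^\wedge_2\otimes(P^\wedge)^{n-i}$, followed by the remaining $\alpha_{1,\ldots,2,\ldots,1}$. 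The point of this factorization is that it exposes $\alpha_{1,1}$ acting precisely on the two factors produced by $\delta_1$.

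Next, by definition of $Q_1$ as the pullback of $(P^\wedge)^2\to P^\wedge_2\leftarrow P^\wedge$, the relation $\alpha_{1,1}\circ\delta_1=\Delta\circ\chi_1$ holds. Substituting at the $i$-th slot and pulling the $\chi_1$'s to the right, the top–right leg of the square becomes
$$\alpha_{1,\ldots,2,\ldots,1}\circ(\mathrm{id}^{\otimes(i-1)}\otimes\Delta\otimes\mathrm{id}^{\otimes(n-i)})\circ(\chi_1)^{\otimes n}.$$

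Finally I would identify this with the bottom–left leg via coherence. The degeneracy $\Delta^{(i)}$ corresponds in $\Delta_+^{op}$ to the morphism that splits the $i$-th position; by naturality of the lax structure with respect to this morphism,
$$\Delta^{(i)}\circ\alpha_{1,\ldots,1}=\alpha_{1,\ldots,2,\ldots,1}\circ(\mathrm{id}^{\otimes(i-1)}\otimes\Delta\otimes\mathrm{id}^{\otimes(n-i)}),$$
which matches the expression above after composing with $(\chi_1)^{\otimes n}$. The hard part is purely bookkeeping: keeping track of which factor $\delta_1$ acts on and verifying that $\Delta^{(i)}$ is indeed the $i$-th degeneracy of the lax comonad as described; conceptually all three ingredients are either by definition or by coherence of a lax monoidal functor.
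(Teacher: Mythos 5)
Your proposal is correct and follows essentially the same route as the paper: the paper likewise factors the square through $(P^\wedge)^{i-1}P^\wedge_2(P^\wedge)^{n-i}$, with the upper square commuting by the defining relation $\alpha_{1,1}\circ\delta_1=\Delta\circ\chi_1$ of the pullback $Q_1$ and the lower square by the coherence of the lax monoidal structure of $P^\wedge$. The only difference is presentational (you factor the right-hand vertical map first rather than drawing the two stacked squares), so nothing of substance is changed.
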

\begin{proof}
We can factor these squares as 
$$\xymatrix{
(Q_1)^{i-1}Q_1(Q_1)^{n-i}\ar[d]_{(\chi_1)^n}\ar[rrr]^{(\chi_1)^{i-1}\delta_1(\chi_1)^{n-i}}&&&(P^\wedge)^{i-1}(P^\wedge)^2(P^\wedge)^{n-i} \ar[d]^{\alpha(\alpha_{1,1})\alpha}\\
(P^\wedge)^{i-1}P^\wedge(P^\wedge)^{n-i} \ar[d]_{\alpha}\ar[rrr]_{\alpha\Delta\alpha} &&& (P^\wedge)^{i-1}P^\wedge_2(P^\wedge_1)^{n-i} \ar[d]^{\alpha}\\
P^\wedge_n\ar[rrr] &&&P^\wedge_{n+1}.
}$$
The commutation of the top square reduces to the definition of $Q_1$ and that of the bottom square is the lax structure of $Q$.
\end{proof}

%For the other commutation condition, it is convenient to in

\begin{lemma}\label{laxmap2}
The following squares (denoted $(n')$ and $(n'')$ in the diagram) commute
$$\xymatrix{
(Q_{k+1})^n\ar[rr]^{\delta_k^{(n,i)}} \ar[d]_{(\chi_{k+1})^n}&& (Q_k)^{n+1}\ar[d]^-{(\chi_{k})^{n+1}}\\
(Q_{k})^n\ar[rr]^{\delta_{k-1}^{(n,i)}} && (Q_{k-1})^{n+1}.
}$$
\end{lemma}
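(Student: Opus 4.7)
The plan is to reduce the big square to an $n$-fold horizontal pasting of elementary squares, one for each tensor factor, and to recognize those elementary squares as either trivial (for positions other than $i$) or as the compatibility square already built into the defining limit of $Q_{k+1}$ (at position $i$).

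By the formula $\delta_k^{(n,i)} = \chi_k \cdots \delta_k \cdots \chi_k$ with $\delta_k$ in the $i$-th slot (and similarly for the top row), both horizontal arrows of the square are horizontal composites of $n$ natural transformations, and both vertical arrows are the $n$-fold horizontal composite of a single $\chi$. Since horizontal composition is functorial in each slot, the big square pastes as the horizontal composite of $n$ elementary squares, one per factor. For each position $j \in \{1,\dots,n\} \setminus \{i\}$, the elementary square is
$$
\xymatrix@C=3em{
Q_{k+1}\ar[r]^{\chi_{k+1}}\ar[d]_{\chi_{k+1}} & Q_{k}\ar[d]^{\chi_{k}}\\
Q_{k}\ar[r]_{\chi_{k}} & Q_{k-1},
}
$$
which commutes because both legs literally equal the single composite $\chi_{k}\circ\chi_{k+1}\colon Q_{k+1}\to Q_{k-1}$.

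The only genuinely non-trivial factor is at position $i$, where the elementary square reads
$$
\xymatrix@C=3em{
Q_{k+1}\ar[r]^{\delta_{k+1}}\ar[d]_{\chi_{k+1}} & (Q_{k})^2\ar[d]^{(\chi_{k})^2}\\
Q_{k}\ar[r]_{\delta_{k}} & (Q_{k-1})^2.
}
$$
This square commutes by the very construction of $Q_{k+1}$ as a limit: the diagram defining $Q_{k+1}$ explicitly contains the subdiagram $(Q_k)^2 \xrightarrow{(\chi_k)^2} (Q_{k-1})^2 \xleftarrow{\delta_k} Q_k$, and the limiting cone produces the two arrows $\delta_{k+1}$ and $\chi_{k+1}$ precisely so that $(\chi_k)^2 \circ \delta_{k+1} = \delta_k \circ \chi_{k+1}$. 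Pasting the $n$ elementary squares gives the asserted commutativity.

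The main obstacle is really not in this lemma—it is essentially bookkeeping—but in having set up the recursive definition of $Q_{k+1}$ so that the position-$i$ square is part of its defining limit. Once the definitions are in place, no further hypothesis on $\cV$ or on $P$ is required for this lemma; the substantive content of the construction enters elsewhere, when one must show that the tower of $Q_k$'s converges and assembles into the comonadic coreflection.
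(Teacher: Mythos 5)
Your proof is correct and takes essentially the same route as the paper, whose entire proof is ``Easy consequence of the definition of $Q_{k+1}$'': the decomposition into $n$ elementary squares via the interchange law, with the only nontrivial factor commuting because the cone defining the limit $Q_{k+1}$ contains exactly that square, is the intended argument spelled out. (You also silently and correctly normalize the index on the top arrow to $\delta_{k+1}^{(n,i)}$, consistent with the convention $\delta_{k}\colon Q_k\to (Q_{k-1})^2$.)
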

\begin{proof}
Easy consequence of the definition of $Q_{k+1}$.
\end{proof}

\begin{rem}\label{rem2trunc}
The diagram defining $Q_{k+1}$ can be reduced to the coinitial diagram
$$\xymatrix{
&(Q_k)^2\ar[d]\ar@<.6ex>[r]\ar@<-.6ex>[r]&(Q_{k-1})^3\\
Q_k\ar[r]&(Q_{k-1})^2.
}$$
Somehow, this is a way to say that the higher associativity conditions are implied by the associativity.
Therefore, we could have restricted our simplicial diagrams to 2-truncated simplicial diagrams, even for the diagram (\ref{maindiagram}), to defined $Q_\infty$ and its structure.
However, doing so, it would not have been trivial that the maps $Q_\infty^n\to P^\wedge_n$ are natural transformations of simplicial lax monoidal functor.
\end{rem}

\begin{rem}\label{remsmith}
The recursion used in \cite{Smith} is not the same as our. In this reference, the equivalent $L_k$ of our $Q_k$ are defined by cartesian squares
$$\xymatrix{
L_{k+1} \ar[r]\ar[d]&P^\wedge L_k\ar[d]\\
L_k\ar[r]&P^\wedge_2.
}$$
However, we have $Q_1=L_1$ and this will be useful in remark \ref{remsmithfinal}.
\end{rem}

\bigskip

We now turn to the main results of the section. 
Lemma \ref{mainlemma} which is the main point of our proof requires some hypothesis that we state before.

\medskip
We shall call a {\em countable intersection}, the data of a diagram $\dots \to A_{k+1}\to A_k \to \dots A_0$ indexed by $\NN$ where all maps are monomorphisms.

\begin{hypo}\label{hypo}
We are going to assume the following property on the monoidal category $(\cV,\otimes)$:
\begin{itemize}
\item the canonical natural transformation $[A,A']\otimes [B,B']\to [A\otimes B,A'\otimes B']$ is a monomorphism,
\item the functor $\otimes$ commute with countable intersections in each variable.
\end{itemize}
\end{hypo}

These hypothesis are satisfied for the following monoidal categories:
the cartesian category $\Set$ of sets, any topos, and in fact any cartesian category (for example CGH spaces);
the category $\Vect$ of vector spaces,  
and the category $\dgVect$ of chain complexes over a field.

Condition (2) is trivial in cartesian categories, here is a proof for $\Vect$ and $\dgVect$.
\begin{lemma}\label{monotens}
Let $A$ be an object in $\Vect$ (or $\dgVect$), then the functor $V\mapsto A\otimes V$ commutes to countable intersections.
\end{lemma}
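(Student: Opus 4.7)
The plan is to reduce the statement to a transparent fact about direct sums of vector spaces by picking a basis of $A$. Fix a basis $\{a_\alpha\}_{\alpha\in I}$ of $A$ (in the $\dgVect$ case, a homogeneous basis of each $A_p$). Writing every element of $A\otimes V$ uniquely as a finite sum $\sum_\alpha a_\alpha\otimes v_\alpha$ gives, for every $V$, a natural isomorphism $\phi_V:A\otimes V \xrightarrow{\ \cong\ } \bigoplus_{\alpha\in I} V$ which is functorial in $V$ and sends the monomorphism $A\otimes V'\hookrightarrow A\otimes V$ attached to an inclusion $V'\subseteq V$ to the componentwise inclusion $\bigoplus_\alpha V' \hookrightarrow \bigoplus_\alpha V$. (Over a field, tensoring with $A$ is exact, so there is no ambiguity in seeing $A\otimes V'$ as a subspace of $A\otimes V$.)

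Let $\dots\subseteq V_{k+1}\subseteq V_k\subseteq\dots\subseteq V_0$ be a countable intersection. The problem is then reduced, via the $\phi_{V_k}$, to establishing the equality
$$
\bigcap_{k\in\NN}\, \bigoplus_{\alpha\in I} V_k \;=\; \bigoplus_{\alpha\in I}\, \bigcap_{k\in\NN} V_k
$$
of subspaces of $\bigoplus_{\alpha\in I} V_0$. The inclusion $\supseteq$ is immediate. For $\subseteq$, take $x\in\bigcap_k\bigoplus_\alpha V_k$. Viewed as an element of $\bigoplus_\alpha V_0$, it has a fixed finite support $S\subseteq I$ which does \emph{not} depend on $k$ (it is a single element). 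For each $\alpha\in S$, the $\alpha$-coordinate $x_\alpha$ lies in $V_k$ for every $k$, hence in $\bigcap_k V_k$; so $x\in\bigoplus_\alpha\bigcap_k V_k$, as desired.

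In the $\dgVect$ case, the grading decomposes everything: $(A\otimes V)_n=\bigoplus_{p+q=n}A_p\otimes V_q$, and inclusions of sub-dg-complexes $V_{k+1}\subseteq V_k$ are in particular inclusions of graded subspaces (the differential is irrelevant for comparing underlying graded objects, which is all that matters for an intersection). Hence running the previous argument in each bidegree $(p,q)$ yields $(A\otimes\bigcap_k V_k)_n=\bigcap_k(A\otimes V_k)_n$ in each degree $n$, and the total result follows.

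There is essentially no obstacle: the only mildly subtle point is that when $I$ is infinite, one must work with the direct sum rather than the direct product, and exploit the fact that the support of a single element is finite and is preserved as one intersects over $k$. The hypothesis that $\cV$ is $\Vect$ or $\dgVect$ over a \emph{field} enters only through the existence of a basis of $A$, which is what enables the canonical splitting $\phi_V$.
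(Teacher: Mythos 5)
Your proof is correct, but it takes a different route from the paper's. The paper decomposes the \emph{tower} rather than $A$: it chooses a complement $V'_k$ of $V_{k+1}$ in $V_k$, asserts $V_k=V_\infty\oplus\bigoplus_{i>k}V'_i$, tensors this decomposition with $A$, and reads off the limit. You instead decompose $A$ itself by choosing a basis, turning $A\otimes(-)$ into $\bigoplus_{\alpha\in I}(-)$, and then use the finite-support argument to commute a countable intersection past a direct sum indexed by a fixed set. Your version is arguably the safer one: the paper's key identity $V_k=V_\infty\oplus\bigoplus_{i>k}V'_i$ is delicate (as stated it can fail --- e.g.\ for $V_0=k^{\NN}$ and $V_k$ the sequences vanishing in the first $k$ coordinates, the right-hand side is only $k^{(\NN)}$ --- so some additional care is needed to make that step rigorous), whereas your finite-support argument is airtight and makes transparent exactly where the field hypothesis and the finiteness of tensors enter. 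What the paper's approach buys is that it never mentions a basis of $A$ and phrases everything in terms of splittings of the tower, which is closer in spirit to how the dg case is handled; but your degreewise reduction for $\dgVect$ is equally valid.
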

\begin{proof}
Let $\dots \to V_{k+1}\to V_k\to \dots V_0$ be a countable intersection of subobjects of an object $V_0$ and let $V_\infty$ be the limit of the intersection.
Let $V'_k$ be a complement of $V_{k+1}$ in $V_k$, we have $V_k=V_\infty \oplus \bigoplus_{i > k} V'_i$.
We deduce that $A\otimes V_k = A\otimes V_\infty \oplus \bigoplus_{i > k} A\otimes V'_i$ and $\lim (A\otimes V_k) = A\otimes V_\infty$.
The proof can be adapted for dg-vector spaces by remarking that intersections of complexes are computed degreewise on the underlying graded objects.
\end{proof}

Hypothesis (2) will be useful because of the following fact.

\begin{lemma}\label{monotens2}
Let $(\cC,\otimes)$ be a symmetric monoidal category, then the functor $V\mapsto V^{\otimes n}$ commutes to all sifted limits preserved by $\otimes$ in each of its variables.
\end{lemma}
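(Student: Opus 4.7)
The plan is to factor the functor $V\mapsto V^{\otimes n}$ as the composite
$$I \xrightarrow{\Delta} I^n \xrightarrow{D^{\times n}} \cC^n \xrightarrow{\otimes} \cC,$$
where $D\colon I\to\cC$ is the given sifted diagram with limit $V_\infty$, $\Delta$ is the diagonal of $I$, $D^{\times n}(i_1,\dots,i_n)=(D(i_1),\dots,D(i_n))$, and the last arrow is the $n$-fold tensor product of $\cC$. I would then compute the limit of this composite in two steps.

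First, I would iterate the assumption that $\otimes$ preserves the limit of $D$ in each variable. A one-variable-at-a-time application of this hypothesis (a Fubini for limits that are only preserved in each variable separately) gives the identification
$$V_\infty^{\otimes n} \;=\; \bigl(\lim_{i_1}D(i_1)\bigr)\otimes\cdots\otimes\bigl(\lim_{i_n}D(i_n)\bigr) \;=\; \lim_{(i_1,\dots,i_n)\in I^n} D(i_1)\otimes\cdots\otimes D(i_n),$$
so that $V_\infty^{\otimes n}$ is realized as the limit of $\otimes\circ D^{\times n}$ taken over the product category $I^n$.

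Second, I would use the sifted condition on $I$. In the standard convention, saying that the limit over $I$ is sifted amounts to $I^{op}$ being sifted, i.e.\ to the diagonal $\Delta\colon I\to I\times I$ being an initial functor. A short induction (writing $\Delta_n\colon I\to I^n$ as $(\mathrm{id}\times\Delta_{n-1})\circ\Delta_2$, using that initial functors compose and that the product of an initial functor with an identity remains initial) then shows that $\Delta\colon I\to I^n$ is initial for every $n\geq 1$. Since precomposition with an initial functor leaves limits unchanged, we obtain
$$\lim_{(i_1,\dots,i_n)\in I^n} D(i_1)\otimes\cdots\otimes D(i_n) \;=\; \lim_{i\in I} D(i)^{\otimes n},$$
and combining with the previous identification yields $V_\infty^{\otimes n}=\lim_i D(i)^{\otimes n}$, as desired.

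The main step is the second one, relating the limit over $I^n$ back to the limit over $I$ via the sifted condition; this is where the hypothesis on $I$ actually enters. The first step is bookkeeping with the hypothesis on $\otimes$, and the initiality of the higher diagonals $\Delta\colon I\to I^n$ reduces inductively to the very definition of siftedness applied to $I^{op}$.
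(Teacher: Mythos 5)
\begin{quote}
Your proof is correct and follows essentially the same route as the paper's: commute $\otimes$ past the limit one variable at a time to realize $V_\infty^{\otimes n}$ as a limit over $I^n$, then use siftedness (initiality of the diagonal, in the dual convention) to restrict that limit to the diagonal copy of $I$. The paper only writes out the case $n=2$ and asserts the general case is similar; your inductive argument for the initiality of $\Delta\colon I\to I^n$ fills in that elision but introduces no new idea.
\end{quote}
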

\begin{proof}
We prove the result for $n=2$, the general proof is similar.
Let $I$ be the opposite of sifted category and $V:I\to \cC$ a contravariant diagram.
We assume that, for any $A\in \cC$, the functor $A\otimes -$ commutes to the limit of $V$.
Then we have $(\lim_i V_i)\otimes (\lim_i V_i) = \lim_{i,j} V_i\otimes V_j$.
Because $I^{op}$ is sifted we have also $\lim_{i,j} V_i\otimes V_j = \lim_i V_i\otimes V_i$.
This proves that $\lim_i (V_i\otimes V_i) = (\lim_i V_i)\otimes (\lim_i V_i)$.
\end{proof}
Since $\NN$ is filtered, hence sifted, hypothesis (2) implies that the functors $V\mapsto V^{\otimes n}$ preserve countable intersections.

\medskip
We need a few more lemmas in order to prove that $Q_\infty$ is the comonadic coreflection of $P^\wedge$.

\begin{lemma}\label{Qmono}
\begin{enumerate}
\item All $Q_k$ preserve countable intersections.
\item All maps $\chi_k:Q_k\to Q_{k-1}$ are monomorphisms.
\end{enumerate}
\end{lemma}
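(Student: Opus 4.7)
The plan is to prove both statements simultaneously by induction on $k$, using the recursive definition of $Q_{k+1}$ as a limit built from the $Q_j$ for $j \leq k$ together with the standing assumptions on $\cV$.

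For the base case, $Q_0 = P^\wedge$ sends $C$ to $[P^{\overline{x}}_y, C^{\otimes \overline{x}}]$; the tensor-power functor $C \mapsto C^{\otimes \overline{x}}$ preserves countable intersections by combining Hypothesis~\ref{hypo}(2) with Lemma~\ref{monotens2}, and $[P^{\overline{x}}_y, -]$ preserves all limits as a right adjoint, so their composite preserves countable intersections. For part (2), $\chi_1$ is by construction the pullback of $\alpha_{1,1} : (P^\wedge)^2 \to P^\wedge_2$ along $\Delta$; the map $\alpha_{1,1}$ is obtained by iterating the canonical transformation $\otimes_i [A_i, B_i] \to [\otimes_i A_i, \otimes_i B_i]$, which is a monomorphism by Hypothesis~\ref{hypo}(1), and monomorphisms are closed under composition and pullback.

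For the inductive step, assume both statements hold through index $k$. For part (1), the diagram defining $Q_{k+1}$ has vertices of the form $(Q_j)^n$ with $j \leq k$, where the exponent denotes composition of endofunctors. Compositions of functors preserving countable intersections still preserve them, so by induction each vertex does, and since $Q_{k+1}$ is their limit and limits commute with limits, $Q_{k+1}$ preserves countable intersections. For part (2), invoking Remark~\ref{rem2trunc} we may view $\chi_{k+1}$ as the pullback of $(\chi_k)^2 : (Q_k)^2 \to (Q_{k-1})^2$ along the map $Q_k \to (Q_{k-1})^2$. Factoring $(\chi_k)^2$ pointwise as $Q_k(Q_k(C)) \to Q_k(Q_{k-1}(C)) \to Q_{k-1}(Q_{k-1}(C))$ shows it is a monomorphism provided each $Q_k$ preserves monomorphisms; pulling back then gives $\chi_{k+1}$ monic.

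The key obstacle is propagating mono-preservation: to conclude $(\chi_k)^2$ is monic one needs each $Q_k$ to send monomorphisms to monomorphisms, which has to be carried as a third clause in the induction. This is automatic in the examples allowed by Hypothesis~\ref{hypo}: tensor preserves monos (cartesian products in $\Set$, toposes, and CGH spaces; exactness over a field in $\Vect$ and $\dgVect$), internal hom is a right adjoint hence preserves monos, and monos are stable under the pullbacks and countable limits used to cut out $Q_k$ from these building blocks. Once this auxiliary fact is secured, the rest of the argument is a formal manipulation of limits and compositions.
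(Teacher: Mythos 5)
Your argument follows the paper's proof essentially step for step: a simultaneous induction with base case $Q_0=P^\wedge$ handled via Hypothesis~\ref{hypo} and Lemma~\ref{monotens2}, the mono $\alpha_{1,1}:(P^\wedge)^2\to P^\wedge_2$ coming from Hypothesis~\ref{hypo}(1), and the inductive step run on the reduced limit diagram of Remark~\ref{rem2trunc}, with $\chi_{k+1}$ monic because $Q_{k+1}$ maps monomorphically into the pullback of the mono $(\chi_k)^2:(Q_k)^2\to(Q_{k-1})^2$. Your explicit observation that one must also know each $Q_k$ preserves monomorphisms in order for the horizontal composite $(\chi_k)^2$ to be monic is a point the paper asserts without comment, and carrying it as a third induction clause, verified as you indicate from the standing hypotheses on $\cV$ and the stability of mono-preserving functors under composition and pointwise limits, correctly closes that step.
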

\begin{proof}
We are going to prove both statements with an induction.
We put $Q_0 = P^\wedge$ and $Q_{-1} = P^\wedge_2$ and start at $k=0$.

(1) Using lemma \ref{monotens2}, we deduce that $Q_0$ preserves countable intersections.
(2) We have $(Q_0)^2(V) = [P_y^{\overline{x}}, \otimes_i [P_{x_i}^{\overline{z_i}}, V^{\otimes \overline{z_i}}]$
and $Q_{-1}(V) = [P_y^{\overline{x}},[P_{\overline{x}}^{\overline{z}}, V^{\otimes \overline{z}}]]$.
$\otimes_i [P_{x_i}^{\overline{z_i}}, V^{\otimes \overline{z_i}}]\to [P_{\overline{x}}^{\overline{z}}, V^{\otimes \overline{z}}]$ is a monomorphism by hypothesis \ref{hypo}.
Ends preserves monos in their second variable, hence $(Q_0)^2\to Q_{-1}$ is a monomorphism.

Now, we assume that the statements are true at rank $\leq k$ for some $k$. 
By remark \ref{rem2trunc}, $Q_{k+1}$ is the limit of the diagram
$$\xymatrix{
&(Q_k)^2\ar[d]\ar@<.6ex>[r]\ar@<-.6ex>[r]&(Q_{k-1})^3\\
Q_k\ar[r]&(Q_{k-1})^2.
}$$
(1) By hypothesis, $Q_k$, $Q_{k-1}$ and their composites preserve countable intersections.
The limit product of functors preserving some limits also preserves those limits since fiber products are computed termwise in functor categories.
This proves that $Q_{k+1}$ preserves countable intersections.
(2) Let $Q'_{k+1}$ be the pullback of $(Q_{k})^2\to (Q_{k-1})^2 \leftarrow Q_{k}$. 
The map $Q'_{k+1}\to Q_k$ is a mono as the pullback of the mono $(Q_{k})^2\to (Q_{k-1})^2$.
Then $Q_{k+1}$ can equivalently be defined as the equalizer of $Q'_{k+1}\to (Q_{k})^2\rightrightarrows (Q_{k-1})^2$, hence 
$Q_{k+1}\to Q'_{k+1}$ is a mono and so is $Q_{k+1}\to Q_{k}$ by composition.
\end{proof}
In fact, we have proved that the $Q_k$s preserve all limits preserved by $P^\wedge$ and $P^\wedge_2$.

\begin{lemma}\label{mainlemma}
Under hypothesis \ref{hypo}, $(Q_\infty)^n$ is the limit of the tower of $(Q_k)^n$.
\end{lemma}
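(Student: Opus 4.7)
The plan is to unfold $(Q_\infty)^n$ using the definition $Q_\infty = \lim_k Q_k$ as a pointwise (and hence composition-wise) limit, and then collapse the resulting $n$-fold limit to a single limit over the diagonal of $\NN^{op}$. The base case $n=1$ is tautological; for the inductive step I would first verify a crucial auxiliary fact: since $Q_\infty$ is defined pointwise as $Q_\infty(V) = \lim_k Q_k(V)$, and since each $Q_k$ preserves countable intersections by Lemma \ref{Qmono}(1), the standard interchange of limits shows that $Q_\infty$ itself preserves countable intersections. Combined with Lemma \ref{Qmono}(2), this will let me move $Q_\infty$'s and $Q_k$'s past limits at will.

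The core computation is then, for any $V$:
$$
(Q_\infty)^n(V) \;=\; Q_\infty\bigl(Q_\infty\bigl(\cdots Q_\infty(V)\cdots\bigr)\bigr) \;=\; \lim_{k_1}\lim_{k_2}\cdots \lim_{k_n} Q_{k_1}Q_{k_2}\cdots Q_{k_n}(V),
$$
obtained by expanding the outermost $Q_\infty$ first (using the definition), then the next one (using that $Q_{k_1}$ preserves the countable intersection $\lim_{k_2} Q_{k_2}\cdots Q_{k_n}(V)$ by Lemma \ref{Qmono}(1)), and so on inductively inward. Fubini for limits then consolidates this into a single limit $\lim_{(k_1,\dots,k_n)\in(\NN^{op})^n} Q_{k_1}\cdots Q_{k_n}(V)$. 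To conclude I invoke cofinality of the diagonal $\Delta:\NN^{op}\to (\NN^{op})^n$, $m\mapsto (m,\dots,m)$: given any $(k_1,\dots,k_n)$, the comma category $(\Delta\downarrow (k_1,\dots,k_n))$ consists of integers $m\geq \max_i k_i$, which is non-empty and connected (upward-directed), so the limit over $(\NN^{op})^n$ equals the limit over the diagonal, yielding $\lim_m Q_m\cdots Q_m(V) = \lim_m (Q_m)^n(V)$.

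The main obstacle is ensuring the intermediate towers are genuinely countable intersections in the sense needed to apply Lemma \ref{Qmono}(1): the transition maps must be monomorphisms. The natural transformations $\chi_k: Q_k\to Q_{k-1}$ are monos by Lemma \ref{Qmono}(2), but after applying another $Q_j$ on either side we need monos to be preserved. This is where Hypothesis \ref{hypo} enters essentially: $P^\wedge$ and $P^\wedge_2$ are built from internal-homs and tensor products that preserve monomorphisms (in the second variable for $[-,-]$, and by the first clause of the Hypothesis for the lax comparison maps), and the $Q_k$ are then obtained as iterated fiber products of such functors, so they preserve monos as well. With this observation the induction goes through and the statement follows.
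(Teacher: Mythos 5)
Your proof is correct and follows essentially the same route as the paper's: expand $Q_\infty$ pointwise as a countable intersection, commute each $Q_k$ past the inner limits using Lemma \ref{Qmono}, and conclude by cofinality of the diagonal (siftedness of the indexing category $\NN$). The paper only writes out the case $n=2$ and asserts the general case is similar; your inductive treatment, including the check that the intermediate towers remain genuine countable intersections (transition maps monic), supplies exactly what that assertion leaves implicit.
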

\begin{proof}
We are going to prove the case $n=2$ to simplify notations, the general proof is similar.
By lemma \ref{Qmono}, we deduce that $\lim Q_k$ is a countable intersection of functors and that 
$$(Q_\infty)^2 = (\lim Q_\ell) (\lim Q_k) = \lim Q_\ell(\lim Q_k)= \lim_{k,\ell} Q_\ell Q_k.$$
The category $\NN$ indexing the countable intersection is sifted,
we deduce that $\lim_{k,\ell} Q_\ell Q_k = \lim_{k} Q_k Q_k$, which proves the result.
\end{proof}

\begin{lemma}\label{comonadic}
$Q_\infty$ is a comonad.
\end{lemma}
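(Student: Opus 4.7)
The plan is to assemble a comonad structure on $Q_\infty$ from the finite-level data $(\chi_k, \delta_k)$ and to verify the axioms via Lemma \ref{mainlemma}. For the comultiplication, Lemma \ref{laxmap2} ensures that the family $\delta_{k+1} : Q_{k+1} \to (Q_k)^2$ is compatible with the tower projections, and Lemma \ref{mainlemma} identifies $\lim_k (Q_k)^2$ with $(Q_\infty)^2$, so one gets an induced map $\delta_\infty : Q_\infty \to (Q_\infty)^2$. For the counit I take the composite $\epsilon_\infty : Q_\infty \to Q_0 = P^\wedge \xrightarrow{\epsilon^{P^\wedge}} \mathrm{Id}$, where $\epsilon^{P^\wedge}$ is the counit of the lax comonad $P^\wedge$, i.e., the face map $P^\wedge_1 \to P^\wedge_0 = \mathrm{Id}$ of the augmented simplicial structure.

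Coassociativity of $\delta_\infty$ is checked after projection to each $(Q_k)^3$, using the version of Lemma \ref{mainlemma} for $n = 3$. By construction, the two composites $Q_\infty \to (Q_\infty)^2 \rightrightarrows (Q_\infty)^3$ project, at level $k$, to the two maps $(\chi_{k+1}\delta_{k+1}) \circ \delta_{k+2}$ and $(\delta_{k+1}\chi_{k+1}) \circ \delta_{k+2}$ from $Q_{k+2}$ to $(Q_k)^3$. These are equal precisely because the defining diagram of $Q_{k+2}$ (Remark \ref{rem2trunc}) is built to equalize this very parallel pair.

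The counit axiom $(\epsilon_\infty \, \mathrm{id}_{Q_\infty}) \circ \delta_\infty = \mathrm{id}_{Q_\infty}$ reduces, after projection to each $Q_k$, to the finite-level identity $(\eta_k \, \mathrm{id}_{Q_k}) \circ \delta_{k+1} = \chi_{k+1}$, with $\eta_k := \epsilon^{P^\wedge} \circ \chi_1 \circ \cdots \circ \chi_k : Q_k \to \mathrm{Id}$. I prove this by induction on $k$. The base case $k = 0$ is the computation
$$
(\epsilon^{P^\wedge} \, \mathrm{id}_{P^\wedge}) \circ \delta_1 = d_0 \circ \alpha_{1,1} \circ \delta_1 = d_0 \circ \Delta \circ \chi_1 = \chi_1,
$$
combining the defining pullback of $Q_1$, the lax-monoidal compatibility $d_0 \circ \alpha_{1,1} = \epsilon^{P^\wedge} \, \mathrm{id}_{P^\wedge}$ of the lax comonad $P^\wedge$, and the simplicial identity $d_0 \circ \Delta = \mathrm{id}$. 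In the inductive step, post-composing the level-$k$ identity with $\chi_k$ and applying Lemma \ref{laxmap2} turns it into the level-$(k-1)$ identity premultiplied by $\chi_k$; since $\chi_k$ is a monomorphism (Lemma \ref{Qmono}), the extra factor can be cancelled. The right counit axiom is symmetric, with $d_1$ replacing $d_0$.

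The main obstacle is this final step: coassociativity is essentially baked into the recursive definition of the $Q_k$, but counitality must be propagated up the tower, using crucially that each $\chi_k$ is a monomorphism — which is exactly where Hypothesis \ref{hypo} enters.
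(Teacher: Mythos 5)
Your proof is correct and follows essentially the same route as the paper: the comultiplication is induced on the limit via Lemma \ref{mainlemma} from the tower of maps $\delta_{k+1}$, coassociativity is read off from the defining diagrams of the $Q_k$, and the counit is pulled back from $P^\wedge$ along $Q_\infty\to Q_0$. The only divergence is in the verification of counitality: where the paper lifts the maps $\epsilon_k^{(i)}:(Q_k)^n\to (Q_k)^{n-1}$ one level up the tower to complete the truncated diagrams into augmented simplicial ones and then passes to the limit, you check the counit identity directly at each finite level by induction, cancelling the monomorphisms $\chi_k$ of Lemma \ref{Qmono} --- an equivalent and, if anything, more explicit argument.
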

\begin{proof}
From lemma \ref{mainlemma} we deduce that the maps $(Q_k)^n\to (Q_{k-1})^{n+1}$ lift to maps $(Q_\infty)^n\to (Q_\infty)^{n+1}$.
Moreover the definition of the $(Q_k)^n\to (Q_{k-1})^{n+1}$ ensure that all simplicial identities are statisfied. 
This prove that $Q_\infty$ has a coassociative codiagonal. 
Now, we need to check that is has a compatible counit.

$P^\wedge$ has a counit $\epsilon:P^\wedge \to Id$ from which we define maps $\epsilon_k:Q_k\to P^\wedge \to Id$ ($k=0, \dots ,\infty$).
Then, from the $\epsilon_k$s, we can construct maps $\epsilon_k^{(i)}:(Q_k)^n\to (Q_k)^{n-1}$ by applying $\epsilon_k$ to the $i$-th factor. 
Using inductively the definition of $Q_{k}$, we can lift the maps $\epsilon_k^{(i)}$ into maps
$\eta_k^{(i)}:(Q_k)^n\to (Q_{k+1})^{n-1}$ and complete the dashed arrows of the diagram (\ref{maindiagram}) into augmented truncated simplicial diagrams.
Using lemma \ref{Qmono} we prove that the limits of the maps $\eta_k^{(i)}$ are the maps $\epsilon_\infty^{(i)}$ and we get a full augmented simplicial diagram on the $(Q_\infty)^n$s.
\end{proof}

\begin{thm}\label{mainthm1}
$Q_\infty$ is the comonadic coreflexion of $P^\wedge$.
\end{thm}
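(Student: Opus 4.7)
The canonical projections $Q_k \to P^\wedge$ assemble, at the limit, into a lax comonad morphism $\chi_\infty : Q_\infty \to P^\wedge$. What remains is to verify the universal property: for every comonad $R$ on $\cV^X$ equipped with a lax comonad morphism $\phi : R \to P^\wedge$, there should exist a unique comonad morphism $\tilde\phi : R \to Q_\infty$ with $\chi_\infty \tilde\phi = \phi$.

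My plan for existence is an induction along the tower $(Q_k)$. Set $\phi_0 := \phi$; supposing $\phi_k : R \to Q_k$ is already built with $\chi_k \phi_k = \phi_{k-1}$, form $(\phi_k)^2 \Delta_R : R \to (Q_k)^2$ using the diagonal of the comonad $R$. Together with $\phi_k$ itself, these data constitute a cone over the diagram from Remark \ref{rem2trunc} whose limit is $Q_{k+1}$: the compatibility over $(Q_{k-1})^2$ reads $\delta_{k-1} \phi_k = (\phi_{k-1})^2 \Delta_R$, which holds because $\phi_k$ was constructed as a lift at the previous stage (at the base $k=0$ this is exactly the assumption that $\phi$ is a lax comonad morphism); the equality of the two composites into $(Q_{k-1})^3$ comes from the coassociativity of $R$ transported along $(\phi_{k-1})^3$. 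The universal property of the limit then gives a unique $\phi_{k+1} : R \to Q_{k+1}$ over $\phi_k$. Taking the limit yields $\tilde\phi : R \to Q_\infty$.

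To show $\tilde\phi$ is a comonad morphism, counit compatibility follows from the augmentation built in Lemma \ref{comonadic}, since $\epsilon_k \phi_k$ equals the counit of $R$ at every stage by the lax hypothesis on $\phi$. For coassociativity, Lemma \ref{mainlemma} identifies $(Q_\infty)^2 = \lim_k (Q_k)^2$, so it suffices to verify that $\Delta_{Q_\infty} \tilde\phi$ and $\tilde\phi^2 \Delta_R$ have the same projection to every $(Q_k)^2$; both are $(\phi_k)^2 \Delta_R$ by construction. For uniqueness, if $\psi : R \to Q_\infty$ is another comonad morphism over $\phi$, its projections $\psi_k : R \to Q_k$ satisfy $\psi_0 = \phi_0$, and by induction $\psi_{k+1}$ and $\phi_{k+1}$ are two lifts of the common map $\psi_k = \phi_k$ along the monomorphism $\chi_{k+1}$ of Lemma \ref{Qmono}, hence equal; passing to the limit gives $\psi = \tilde\phi$.

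The delicate point, and the place where Hypothesis \ref{hypo} is essential, is the coassociativity step: the identification $(Q_\infty)^n = \lim_k (Q_k)^n$ is what permits both the definition of $\Delta_{Q_\infty}$ in Lemma \ref{comonadic} and the comparison of natural transformations into $(Q_\infty)^n$ stage by stage. Without the interchange of the composition $(-)^n$ with the countable intersection defining $Q_\infty$, neither $Q_\infty$ would carry a comonad structure nor could the universal property be checked.
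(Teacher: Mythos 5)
Your proof is correct and takes essentially the same route as the paper: inductively lift the lax comonad morphism $R\to P^\wedge$ through the tower using the limit description of each $Q_{k+1}$ (with the coassociativity of $R$ supplying the cone condition into $(Q_{k-1})^3$), pass to the limit, and use Lemma \ref{mainlemma} to see the result is a comonad morphism. You are somewhat more explicit than the paper about uniqueness (via the monomorphisms of Lemma \ref{Qmono}) and about the counit; the only blemish is an index slip --- the compatibility over $(Q_{k-1})^2$ should read $\delta_k\phi_k=(\phi_{k-1})^2\Delta_R$, since $\delta_k$ denotes the projection $Q_k\to(Q_{k-1})^2$.
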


Applying proposition \ref{propcoref}, we get the final result.
\begin{cor}
$Q_\infty$ is the cofree $P$-coalgebra comonad.
\end{cor}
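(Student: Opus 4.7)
The plan is to assemble the corollary directly from Theorem \ref{mainthm1} and Proposition \ref{propcoref}, together with the lemma identifying $P$-coalgebras with coalgebras over the lax comonad $P^\wedge$. The argument is a short concatenation of three observations; there is no new computation to perform beyond invoking these earlier results.

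First, by the lemma immediately preceding Proposition \ref{propcoref}, the category of $P$-coalgebras coincides with the category of coalgebras over the lax comonad $P^\wedge$. Second, Theorem \ref{mainthm1} provides a lax comonad morphism $Q_\infty \to P^\wedge$ exhibiting $Q_\infty$, now a genuine comonad by Lemma \ref{comonadic}, as the coreflection of $P^\wedge$ into the subcategory of comonads. Third, Proposition \ref{propcoref} applied with $Q = P^\wedge$ and $Q' = Q_\infty$ yields an equivalence between the categories of $P^\wedge$-coalgebras and $Q_\infty$-coalgebras that identifies cofree objects on both sides.

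To conclude, one only has to recognize the cofree $Q_\infty$-coalgebra functor. Since $Q_\infty$ is a comonad on $\cV^X$, the classical cofree/forgetful adjunction between $\cV^X$ and the category of $Q_\infty$-coalgebras has a cofree functor whose underlying endofunctor is $Q_\infty$ itself, and the comonad it generates on $\cV^X$ is again $Q_\infty$. Transporting back through the equivalence supplied by Proposition \ref{propcoref}, the cofree $P$-coalgebra on an object of $\cV^X$ is computed by $Q_\infty$, and the comonad generating $P$-coalgebras is $Q_\infty$.

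No genuine obstacle is expected: once Theorem \ref{mainthm1} is in hand, this corollary is a formal consequence of the coreflection machinery of Section \ref{laxcomonad}. The only small point to keep in mind is the standard identification of the cofree coalgebra functor of a comonad with that comonad itself, which is automatic from the definitions.
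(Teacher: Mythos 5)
Your proposal is correct and follows exactly the paper's own route: the paper derives this corollary in one line by applying Proposition \ref{propcoref} to the coreflection established in Theorem \ref{mainthm1}, which is precisely the chain of observations you spell out. The extra detail you give (identifying $P$-coalgebras with $P^\wedge$-coalgebras and recognizing the cofree functor of the comonad $Q_\infty$ as $Q_\infty$ itself) is implicit in the paper's invocation of that proposition.
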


\begin{proof}[Proof of \ref{mainthm1}]
%Since $(Q_\infty)^n$ is the limit of the tower of $(Q_k)^n$, the maps $\delta_k^{(n,i)}:(Q_k)^n\to (Q_{k-1})^{n+1}$ induce maps $(Q_\infty)^n\to (Q_\infty)^{n+1}$ building the degeneracy maps of a simplicial diagram.
%Recall that $Q_0=P^\wedge$ has a canonical map $P^\wedge \to Id$, the composite map $Q_\infty\to Q_0\to Id$ can be used to creates face maps
%$(Q_\infty)^{n+1}\to (Q_\infty)^n$ building the rest of an augmented simplicial diagram.
%The simplicial relations are consequences of the naturality of the lax structure of $P^\wedge$.
%This proves that $Q_\infty$ is a comonad.

Let us first prove that there is a canonical morphism of lax comonads to $Q_\infty\to P^\wedge$.
The construction of $Q_\infty$ has produced maps $(Q_\infty)^n\to (P^\wedge)^n\to P^\wedge_n$.
Recall from the proof of lemma \ref{comonadic} the simplicial diagram giving the comonad structure of $(Q_\infty)^n$.
Lemma  \ref{laxmap1} proves that the composition $(Q_\infty)^n\to P^\wedge_n$ are natural with respect to simplicial degeneracies maps.
An analogous proof would show that they are also compatible with the faces maps.
This construct a morphism of lax comonad $Q_\infty\to P^\wedge$.
Let us prove that it is a coreflection. 

Let $R$ be a comonad with a lax comonad morphisms $R\to P^\wedge$.
In particular this data gives a diagram
$$
\xymatrix{
R\ar[dd]\ar[r]&R^2\ar[d]\\
 & (P^\wedge)^2\ar[d]\\
P^\wedge\ar[r] & P^\wedge_2
}$$
and a map $R\to Q_1$ factorizing $R\to P^\wedge$.
From this map we get a commutative diagram
$$\xymatrix{
R\ar[dd]\ar[r]&R^2\ar[d]\ar@<.6ex>[rr]\ar@<-.6ex>[rr]&&R^2\ar[d]\\
&(Q_1)^2\ar[d]\ar@<.6ex>[rr]\ar@<-.6ex>[rr]&&(P^\wedge)^3\ar[dd]\\
Q_1\ar[d]\ar[r]&(P^\wedge)^2 \ar[d] &\\
P^\wedge\ar[r] & P^\wedge_2\ar@<.6ex>[rr]\ar@<-.6ex>[rr] && P^\wedge_3;
}$$
and thus a map $R\to Q_2$ factorizing $R\to P^\wedge$.
Continuing this way, we construct a factorisation $R\to Q_\infty\to P^\wedge$.
By construction the map $R\to Q_\infty$ extends to a map of augmented simplicial diagrams, that is a map of comonoids.
This proves that $Q_\infty\to P^\wedge$ is the comonadic coreflection.
\end{proof}

\section{Operadic representative functions}

In this section, the basic symmetric monoidal category $\cV$ is assumed to be $\Vect$ or $\dgVect$. 
In both cases the unit of the tensor product is the ground field $k$. 
If $V\in \cV$, we shall denote by $V^\star=[V,k]$ the dual of $V$ and by $V^{\star\star}$ the double dual of $V$.
The canonical map $[A,B]\otimes [A',B']\to [A\otimes A',B\otimes B']$ 
induces maps $A^\star \otimes (A')^\star\to (A\otimes A')^\star$ and 
maps $A^\star\otimes B'\to [A,B']$.

The main consequences of the hypothesis on $\cV$ are given in the following lemmas. 
In the whole section, we shall give proofs only in the case of $\Vect$, they can be adapted to $\dgVect$ with minor changes to take care of the differential and the grading. Also the notion corresponding to finite dimension vector spaces has to be complexes whose total space are finite dimensional.

\begin{lemma}\label{liftlemma}
\begin{enumerate}
\item A map $f:V_1\otimes V_2\to k$ is in $(V_1)^\star \otimes (V_2)^\star$ iff it factors by a finite dimensional space.
\item A map $f:V_1\to V_2$ is in $(V_1)^\star\otimes V_2$ iff it factors through a finite dimensional space.
\end{enumerate}
\end{lemma}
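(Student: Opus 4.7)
The plan is to prove part (2) first, by a direct basis/dual-basis argument, and then to deduce part (1) from (2) via adjunction.

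For part (2), the forward direction is immediate: an element $f=\sum_{i=1}^n \phi_i\otimes v_i \in V_1^\star\otimes V_2$, viewed as a map $V_1\to V_2$, acts by $v\mapsto \sum_i\phi_i(v)\,v_i$ and hence has image contained in the finite-dimensional subspace $\mathrm{span}(v_1,\dots,v_n)$. For the converse, given a factorization $V_1\xrightarrow{g} W\xrightarrow{h} V_2$ with $\dim W<\infty$, I would pick a basis $(w_i)$ of $W$ with dual basis $(w_i^\star)\subset W^\star$ and write $g(v)=\sum_i w_i^\star(g(v))\,w_i$; applying $h$ then exhibits $f=\sum_i (w_i^\star\circ g)\otimes h(w_i)$ as an element of $V_1^\star\otimes V_2$. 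The adaptation to $\dgVect$ amounts to observing that the same computation goes through degreewise on the underlying graded pieces of a finite-dimensional dg complex.

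For part (1), I would use the adjunction $[V_1\otimes V_2,k]\cong[V_1,V_2^\star]$ to identify $f:V_1\otimes V_2\to k$ with its adjoint $\tilde f:V_1\to V_2^\star$; this identification carries $V_1^\star\otimes V_2^\star\subseteq[V_1\otimes V_2,k]$ onto $V_1^\star\otimes V_2^\star\subseteq[V_1,V_2^\star]$, namely the subspace of finite-rank operators. Part (2) applied to $\tilde f$ then gives the claimed equivalence: $f\in V_1^\star\otimes V_2^\star$ iff $\tilde f$ factors through a finite-dimensional space. To translate this back to a factorization of $f$ itself, I would invoke the fact that for finite-dimensional $W_1,W_2$ the canonical map $W_1^\star\otimes W_2^\star\to(W_1\otimes W_2)^\star$ is an isomorphism, so that a factorization of $\tilde f$ through a finite-dimensional space is precisely the same datum as a factorization $V_1\otimes V_2\to W_1\otimes W_2\to k$ of $f$ through a tensor product of finite-dimensional spaces.

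The main obstacle is fixing the precise meaning of ``factors by a finite dimensional space'' in part (1): taken literally, every $f:V_1\otimes V_2\to k$ factors through the one-dimensional space $k$, so the intended factorization must respect the tensor structure---either as $V_1\otimes V_2\to W_1\otimes W_2\to k$ with $W_i$ finite dimensional, or, equivalently, via the adjoint $V_1\to V_2^\star$ having finite rank. Once this convention is pinned down, both parts reduce to the classical identification of $V^\star\otimes W$ with the subspace of finite-rank operators in $[V,W]$, and nothing more than the hypothesis on $\cV$ (that $W_1^\star\otimes W_2^\star\to (W_1\otimes W_2)^\star$ is an iso when both $W_i$ are finite dimensional) is needed.
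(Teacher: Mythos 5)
Your proof is correct, and your part (2) is essentially identical to the paper's (dual basis of a finite-dimensional factorizing space in one direction, span of the $v_i$ in the other). For part (1) you take a genuinely different, and arguably cleaner, route: you reduce (1) to (2) through the adjunction $[V_1\otimes V_2,k]\cong[V_1,V_2^\star]$, whereas the paper argues directly with a dual basis of $(W_1\otimes W_2)^\star=W_1^\star\otimes W_2^\star$. More importantly, the obstacle you flag is real and the paper does not address it: its proof of (1) opens with ``we can assume that $W$ is a tensor product $W_1\otimes W_2$ where $W_i$ is a finite dimensional quotient of $V_i$,'' which is not a harmless normalization for an arbitrary factorization $V_1\otimes V_2\to W\to k$ --- as you observe, every functional factors through the one-dimensional space $k$, so the statement of (1) is literally false unless ``factors'' is read as factoring through a tensor product of maps into finite-dimensional spaces (equivalently, the adjoint $V_1\to V_2^\star$ has finite rank). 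Your convention is the correct one, and it is also the one implicitly used in the paper's sole application of the lemma (the characterization of recursive functions in section \ref{recursivefct}, where the relevant factorization is through the image of the translation map $\lambda^k(f\mu)$, so only the finite-rank reading is ever invoked). Your adjunction argument has the additional merit of making that application transparent, since the passage between $f\mu\in(\bigotimes_{i\neq k}P(A)_{x_i}\otimes P(A)_{x_k})^\star$ and its adjoint $\lambda^k(f\mu)$ is exactly the identification you set up.
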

\begin{proof}

(1) Let us suppose that  $f:V_1\otimes V_2\to k$ factors through a finite dimensional space $W$ and a map $g:W\to k$.
We can assume that $W$ is a tensor product of $W_1\otimes W_2$ where $W_i$ is a finite dimensional quotient of $V_i$.
Let $(\alpha_i^j)$ be a basis of $(W_i)^\star$, then $g= \sum_{i,j} g_{i,j} \alpha_1^i\otimes\alpha_2^j$ for some coefficients $g_{i,j}\in k$.
Let us call $\beta_i^j$ the image of $\alpha_i^j$ by $(W_i)^\star\to (V_i)^\star$, then $f= \sum_{i,j} g_{i,j} \beta_1^i\otimes \beta_2^j$.
Reciprocally, if $f\in (V_1)^\star\otimes (V_2)^\star$, it can be written as $f= \sum_{j\in J} \gamma_1^j\otimes \gamma_2^j$ where $\gamma_i^j\in V_i$ and $J$ is finite. The collection of $\gamma_i^j$ for $i$ fixed define a map $V_i\to k^J$, then $f$ factors through $V_1\otimes V_2\to k^J\otimes k^J$.

(2) Let us suppose that $f:V_1\to V_2$ facror through a finite dimensional space $W$ and a map $g:W\to V_2$.
 We can assume $W$ is a quotient of $V_1$.
Let $(\alpha^j)$ be a basis of $W^\star$, then $g= \sum_{i} b_i\otimes \alpha_1^i$ for some coefficients $b_i\in V_2$.
Let us call $\beta^j$ the image of $\alpha^j$ by $W^\star\to (V_1)^\star$, then $f= \sum_{i} b_i\otimes \beta_1^i$.
Reciprocally, if $f\in (V_1)^\star\otimes V_2$, it can be written as $f= \sum_{i\in I} b_i\otimes \gamma^i$ where $\gamma^i\in (V_1)^\star$, $b_i\in V_2$ and $I$ is finite. The $\gamma^i$ define a map $V_i\to k^I$ and $f$ factors through $k^I$.
\end{proof}

\begin{lemma}\label{intertens}
\begin{enumerate}
\item If $A'\subset A$ and $B'\subset B$ in $\cV$, then $A'\otimes B\cap A\otimes B' = A'\otimes B'$.
\item For $A_i\subset A$ and $B_i\subset B$ ($i=1,2$), we have $(A_1\otimes B_1)\cap (A_2\otimes B_2) = (A_1\cap A_2)\otimes (B_1\cap B_2)$.
\end{enumerate}
\end{lemma}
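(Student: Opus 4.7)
The plan is the following. For (1), since $\cV$ is the category of (dg-)vector spaces over a field, every subobject admits a (graded) complement, so I would pick decompositions $A = A' \oplus A''$ and $B = B' \oplus B''$. Distributivity of the tensor product over direct sums then gives
$$A \otimes B = (A' \otimes B') \oplus (A' \otimes B'') \oplus (A'' \otimes B') \oplus (A'' \otimes B'').$$
In this decomposition, $A' \otimes B$ is identified with the direct sum of the first two summands, while $A \otimes B'$ is identified with the first and third. Their intersection is therefore precisely $A' \otimes B'$, as wanted.

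For (2), I would first establish the auxiliary identity $(A_1 \otimes B) \cap (A_2 \otimes B) = (A_1 \cap A_2) \otimes B$ for any $B$ and any subspaces $A_1,A_2 \subset A$: pick a basis $\{e_j\}$ of $B$, expand any element as $\sum_j a_j \otimes e_j$ with $a_j \in A$; the hypothesis that it lies in $A_i \otimes B$ forces each $a_j$ to lie in $A_i$, hence the coefficients lie in $A_1 \cap A_2$. The symmetric identity $(A \otimes B_1) \cap (A \otimes B_2) = A \otimes (B_1 \cap B_2)$ is proved in the same way. Now using (1) I can write
$$A_i \otimes B_i = (A_i \otimes B) \cap (A \otimes B_i) \qquad (i=1,2),$$
so that
$$(A_1 \otimes B_1) \cap (A_2 \otimes B_2) = \bigl((A_1 \otimes B) \cap (A_2 \otimes B)\bigr) \cap \bigl((A \otimes B_1) \cap (A \otimes B_2)\bigr).$$
The auxiliary identities reduce the right-hand side to $\bigl((A_1 \cap A_2) \otimes B\bigr) \cap \bigl(A \otimes (B_1 \cap B_2)\bigr)$, and a final application of (1) collapses this to $(A_1 \cap A_2) \otimes (B_1 \cap B_2)$.

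The only subtle point is the passage to $\dgVect$. The complement chosen in (1) must be a sub-complex, but as remarked in Lemma \ref{monotens} and its proof, intersections of sub-complexes are computed degreewise on the underlying graded spaces, and the same is true of tensor products of sub-complexes. Consequently the whole argument descends to the underlying graded vector spaces degree by degree, where it reduces to the ungraded case treated above. Thus the only genuine work is part (1) in $\Vect$, which is purely formal once complements are chosen; the reductions in part (2) and the passage to $\dgVect$ are then automatic.
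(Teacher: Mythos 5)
Your proof is correct and follows essentially the same route as the paper: part (1) via a choice of complements and the four-fold direct sum decomposition of $A\otimes B$, and part (2) via the identity $A_i\otimes B_i = (A_i\otimes B)\cap(A\otimes B_i)$. The only cosmetic difference is that you prove the one-sided identity $(A_1\otimes B)\cap(A_2\otimes B)=(A_1\cap A_2)\otimes B$ by a basis expansion, where the paper simply invokes the left exactness of $\otimes$; your added care about the $\dgVect$ case (reducing to underlying graded objects rather than claiming complements exist as subcomplexes) is a welcome precision.
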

\begin{proof}
(1) We use decompositions $A=A'\oplus A''$ and $B=B'\oplus B''$.
We have $A\otimes B = A''\otimes B''\oplus A'\otimes B''\cap A''\otimes B' \oplus A'\otimes B'$.
The result follows from $A'\otimes B = A'\otimes B' \oplus A'\otimes B''$ and $A\otimes B' = A'\otimes B' \oplus A''\otimes B'$.

(2) We use (1) to get $A_i\otimes B_i = A_i\otimes B\cap A\otimes B_i$, then the result is a consequence of the left exactness of $\otimes$.
\end{proof}

\medskip
If $X$ is a set, and $V\in\cV^X$ we shall denote by $V^\star$ the object of $\cV^X$ which is the componentwise dual of $V$.
Recall that, if $X$ were a category, $V^\star$ would live in $\cV^{X^{op}}$, hence we are going to use the contravariant notation for components of $V^\star$ when $V$ has covariant components and reciprocally: $(A^\star)^x = (A_x)^\star$ and $(C^\star)_x = (C^x)^\star$.

Monomorphisms in $\cV^X$ are componentwise monomorphisms in $\cV$.
For $V\in \cV^X$, the canonical map $V\to V^{\star\star}$ is a monomorphism.
If $P$ is an $X$-colored operad and $A\in \cV^X$, there exists a canonical map $P^\wedge(A^\star)\to P(A)^\star$ given in components by
$$
[P^{\overline{x}}_y,(A^\star)^{\otimes \overline{x}}] \tto [P^{\overline{x}}_y,(A_{\otimes \overline{x}})^\star] = [P^{\overline{x}}_yA_{\otimes \overline{x}},k].
$$
This map is a monomorphism since ends preserve limits in their second variable.

\medskip
The following lemma is a useful property of the $P^\wedge_n$ when $\cV=\Vect$ or $\dgVect$.

\begin{lemma}\label{prodfibPwedge}
The functors $P^\wedge_n$ preserve intersections of subobjects (fiber products of monomorphisms).
In particular, they preserves monomorphisms.
\end{lemma}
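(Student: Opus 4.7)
\noindent \textit{Proof plan.} The approach is to unwind the definition of $P^\wedge_n = \pi(\PP^\wedge)^n\epsilon$ and reduce the claim to the behavior of the tensor-power functor $C\mapsto C^{\otimes \overline{x}}$. Unrolling gives
$$
P^\wedge_n(C)^y = [\PP^{\overline{x_1}}_y,[\PP^{\overline{x_2}}_{\overline{x_1}},\dots[\PP^{\overline{x_n}}_{\overline{x_{n-1}}}, C^{\otimes \overline{x_n}}]\dots]],
$$
in which $C$ appears only at the innermost position as $C^{\otimes \overline{x_n}}$; every outer layer is either an end or an internal hom $[Q,-]$ whose first argument is independent of $C$. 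Ends are limits, and $[Q,-]$ is a right adjoint, so both preserve all limits in their varying input, and in particular preserve intersections. Hence it suffices to prove that for each fixed $\overline{x}\in S(X)$, the functor $C\mapsto C^{\otimes \overline{x}}$ on $\cV^X$ preserves intersections of subobjects.

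The main step, and the place where the standing assumption $\cV=\Vect$ or $\dgVect$ is really used, is Lemma \ref{intertens}(2). Since intersections in $\cV^X$ are computed componentwise, it is enough to establish the componentwise statement $V_1^{\otimes n}\cap V_2^{\otimes n} = (V_1\cap V_2)^{\otimes n}$ for $V_1,V_2\subset V$ in $\cV$. I would prove this by induction on $n$: the case $n=2$ is Lemma \ref{intertens}(2) directly, and the induction step writes $V_i^{\otimes n} = V_i\otimes V_i^{\otimes(n-1)}$ and applies Lemma \ref{intertens}(2) once more with $A_i=V_i$ and $B_i=V_i^{\otimes(n-1)}$. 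The same argument extends to tensor products of sequences of distinct subobjects, covering the functor $\epsilon$.

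Combining these observations, $P^\wedge_n$ is a composition of functors preserving intersections, so preserves intersections. For the ``in particular'' clause, a morphism $f:A\to B$ is a monomorphism exactly when the diagonal $A\to A\times_B A$ is an isomorphism, \ie when the fiber product of $f$ with itself is $A$. Both legs of that pullback are the mono $f$ itself, so it qualifies as a fiber product of monomorphisms in the sense of the lemma. Its preservation by $P^\wedge_n$ forces $P^\wedge_n(A)\times_{P^\wedge_n(B)}P^\wedge_n(A) = P^\wedge_n(A)$, which is to say that $P^\wedge_n(f)$ is a monomorphism. The only nontrivial ingredient in the whole argument is Lemma \ref{intertens}(2); everything else is formal manipulation of ends and right adjoints, so I expect no real obstacle beyond checking that the display above is indeed the correct unwinding of the composite $\pi(\PP^\wedge)^n\epsilon$.
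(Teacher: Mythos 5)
Your proof is correct and follows essentially the same route as the paper: both reduce the statement to the fact that $\epsilon$ (the tensor-power functor $C\mapsto C^{\otimes\overline{x}}$) preserves intersections via Lemma \ref{intertens}(2), and then observe that the remaining layers of $P^\wedge_n$ are ends and internal homs, which preserve all limits. You merely spell out the induction on tensor length and the diagonal argument for the monomorphism clause, which the paper leaves implicit.
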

\begin{proof}
By lemma \ref{intertens}, the functor $\epsilon:\cV^X\to \cV^{S(X)^{op}}$ sending $C$ to the family of $C^{\otimes \overline{x}}$ preserves 
intersections of subobjects and thus monomorphisms.
Then $P^\wedge_n$ has the same property since it is the composition of $\epsilon$ with an end.
\end{proof}
The lemma proves that the canonical map $P^\wedge(C) \to P^\wedge(C^{\star\star})$ is a monomorphism.
Composing with the previous map we get a monomorphism
$$
P^\wedge(C) \tto P^\wedge(C^{\star\star}) \tto P(C^\star)^\star.
$$
We can then faithfully think elements of $P^\wedge(C)$ as functions on $P(C^\star)$.

\subsection{Representative functions}\label{represfct}

For $V\in \cV^X$, we shall call an {\em element of $V$} any element of any of the components $V^x$ (or $V_x$) of $V$.

Let $P$ be an $X$-colored operad and $A\in \cV^X$, 
for any $y\in X$, we shall say that a linear map $f:P_y(A)\to k$, or equivalently an element of $(P(A)^\star)^y$, is a {\em representative function} if,
for any operation $\mu\in P^{\overline{x}}_y$,
there exists some functions $g^{(k,i)}(\mu,-)$ (depending linearly on $\mu$) such that, 
for any $a_i\in P(A)$ of adequate colors,
$$
f(\mu(a_1,\dots,a_n)) = \sum_i g^{(1,i)}(\mu,a_1)\otimes \dots \otimes g^{(n,i)}(\mu,a_n).
$$
We shall abbreviate such a sum using Sweedler's notation:
$$
f(\mu(a_1,\dots,a_n)) = g^{(1)}(\mu,a_1)\otimes \dots \otimes g^{(n)}(\mu,a_n).
$$
We shall also use the notation $\Delta^{(\mu)}f$ for the composition $f\mu$.

\begin{ex}
If $P=As$ is the associative operad, and $\mu$ is taken to be the binary multiplication operation, this definition implies that a representative function is representative in the sense of \cite{BL, Hazewinkel}. The two conditions are in fact equivalent. But, for an operad where no generators have been specified, we are forced to consider the stronger condition with all operations.			
\end{ex}

Let $P^{rep}(A)^y\subset (P(A)_y)^\star$ be the set of representative functions. Another way to understand the definition if to say that $P^{rep}(A)^y$ is defined as the fiber product
$$\xymatrix{
P^{rep}(A)^y\ar[rr]\ar[d]&& [P_y^{\overline{x}}, \otimes_i(P(A)_{x_i})^\star] = P^\wedge(P(A)^\star)^y \ar[d]\\
(P(A)_y)^\star\ar[rr]^-\Delta &&  [P_y^{\overline{x}}, (\otimes_iP(A)_{x_i})^\star] = (P^2(A)_y)^\star.
}$$
The component of $\Delta f$ in $[P_y^{\overline{x}}, (\otimes_iP(A)_{x_i})^\star]$ (no end taken here) evaluated at $\mu$ is $\Delta^{(\mu)}f= f\mu$.
$f$ is representative iff
$$
\Delta^{(\mu)}f = f\mu \in \bigotimes_i(P(A)_{x_i})^\star.
$$
We shall call $P^{2\star}$ the functor $A\mapsto P^2(A)^\star$.

\begin{lemma}\label{basecube}
There exists a commutative diagram
$$\xymatrix{
P^\wedge(C)\ar[rr]\ar[d]&& P(C^\star)^\star\ar[d] && (P^\wedge)^2(C)\ar[ll]\ar[d]\\
P^\wedge_2(C) \ar[rr] && P^{2\star}(C^\star) &&\ar[ll] P^\wedge(P(C^\star)^\star)
}$$
\end{lemma}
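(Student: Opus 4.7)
The plan is to observe that every arrow of the diagram is assembled from three natural ingredients: the canonical comparison $\kappa_V : P^\wedge(V) \to P(V^\star)^\star$ defined just above the lemma (given componentwise by $[P^{\overline{x}}_y, V^{\otimes \overline{x}}] \to [P^{\overline{x}}_y (V^\star)_{\otimes \overline{x}}, k]$); the operad composition $\gamma$ of $P$, whose dual gives the middle-column arrow $P(C^\star)^\star \to P^{2\star}(C^\star)$ as the dualized monad multiplication of $P^\sim$; and the lax comonad structure maps $\Delta : P^\wedge \to P^\wedge_2$ and $\alpha_{1,1} : (P^\wedge)^2 \to P^\wedge_2$ from section \ref{laxcomonad}. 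Concretely, the top-right arrow is the composite
$$(P^\wedge)^2(C) \xrightarrow{P^\wedge(\mathrm{counit})} P^\wedge(C) \xrightarrow{\kappa_C} P(C^\star)^\star,$$
using the counit $P^\wedge(C) \to C$ of $P^\wedge$; the right column is $P^\wedge(\kappa_C)$; and both bottom-row arrows are induced by $\kappa$ applied at two nested levels (inside $P^\wedge(-)$ for the one from $P^\wedge(P(C^\star)^\star)$, and inside the end defining $P^\wedge_2$ for the one from $P^\wedge_2(C)$).

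For the left square I would work componentwise. Both compositions $P^\wedge(C) \to P^{2\star}(C^\star)$ land in the single hom-object $[P^{\overline{x}}_y P^{\overline{z_1}}_{x_1} \cdots P^{\overline{z_n}}_{x_n} (C^\star)_{\otimes \overline{z_1}\cdots\overline{z_n}},\, k]$, and both factor through the operadic composition $\gamma : P^{\overline{x}}_y \otimes \bigotimes_i P^{\overline{z_i}}_{x_i} \to P^{\overline{z_1}\cdots\overline{z_n}}_y$. The $\Delta$ path applies $\gamma$ on the $P$-side before pairing with $C^{\otimes \overline{z}}$ and then applies $\kappa$; the dual path applies $\kappa$ first and then precomposes with $\gamma$ on the free-algebra side. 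The two agree by the definition of $\Delta$ as the comultiplication induced by $\gamma$, together with naturality of $\kappa$.

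The right square is a naturality statement for $\kappa$. Both compositions $(P^\wedge)^2(C) \to P^{2\star}(C^\star)$ are obtained by iterated application of the canonical map $\bigotimes_i [A_i, B_i] \to [\bigotimes_i A_i, \bigotimes_i B_i]$, the only difference being the order in which this map is applied at the inner versus outer level of nesting. Commutativity then follows from the fact that this canonical map is itself a natural transformation of bifunctors (the same fact that underlies the very definition of $\alpha_{1,1}$). The main obstacle is purely notational: unpacking the componentwise formulas for each arrow is cumbersome, but once this is done both squares reduce to naturality, with no substantive content beyond the tensor-hom adjunction and the compatibility of $\gamma$ with $\kappa$.
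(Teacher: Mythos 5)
Your handling of the left square is essentially the paper's own argument: written in components, the two vertical arrows act by precomposition with the operad multiplication in the contravariant variable of the end, the two horizontal arrows act by postcomposition with the pairing $C^{\otimes \overline{x}}\to ((C^\star)_{\otimes\overline{x}})^\star$ in the covariant variable, and the square commutes by (di)naturality. That part is correct and matches the paper.

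The problem is your identification of the arrow out of $(P^\wedge)^2(C)$ as $\kappa_C\circ P^\wedge(\varepsilon)$, where $\varepsilon:P^\wedge(C)\to C$ is the counit. No counit enters this lemma. The square the lemma is actually responsible for --- it is used as the right face of the cube in Lemma \ref{represrepres} --- compares $\alpha_{1,1}:(P^\wedge)^2(C)\to P^\wedge_2(C)$ followed by the comparison $P^\wedge_2(C)\to P^{2\star}(C^\star)$ against $P^\wedge(\kappa_C):(P^\wedge)^2(C)\to P^\wedge(P(C^\star)^\star)$ followed by $\kappa_{P(C^\star)}:P^\wedge(P(C^\star)^\star)\to P(P(C^\star))^\star=P^{2\star}(C^\star)$; the arrow out of $(P^\wedge)^2(C)$ lands in $P^\wedge_2(C)$, not in $P(C^\star)^\star$. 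With your counit-based arrow the square genuinely fails to commute. Test it on $P=As$ (one colour, non-symmetric), where $(P^\wedge)^2(C)=\prod_n\big(\prod_m C^{\otimes m}\big)^{\otimes n}$ and $P^{2\star}(C^\star)$ has a component for each pair (outer arity $n$; inner arities $m_1,\dots,m_n$): for $f=(f_n)$, the composite through the counit produces in the component $(1;2)$ the functional $\langle(\pi_1\otimes\pi_1)(f_2),-\rangle$, while the composite through $P^\wedge(\kappa_C)$ produces $\langle\pi_2(f_1),-\rangle$; these depend on independent components of $f$. This also contradicts your own justification: a composite that factors through $\varepsilon$ and through $m^\star$ is not obtained by ``iterated application of $\bigotimes_i[A_i,B_i]\to[\bigotimes_i A_i,\bigotimes_i B_i]$'' alone. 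Once the arrows are corrected, your argument for the right square --- both composites are the same nested application of the canonical lax-monoidal maps and the double-dual pairings, applied in two orders --- is exactly the intended one and matches the paper's one-line remark that ``the reasoning is the same for the right square.''
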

\begin{proof}
In components the left square is
$$\xymatrix{
[P^{\overline{x}}_y,C^{\otimes \overline{x}}]\ar[rr]\ar[d]&& [P^{\overline{x}}_y,((C^\star)_{\overline{x}})^\star] \ar[d]\\
[P^{\overline{z}}_y\PP^{\overline{x}}_{\overline{z}},C^{\otimes \overline{x}}] \ar[rr] && [P^{\overline{z}}_y\PP^{\overline{x}}_{\overline{z}},((C^\star)_{\overline{x}})^\star]
}$$
which is a simple consequence of the dinaturality of the end in its variables.
The reasoning is the same for the right square.
\end{proof}

\medskip
We saw in lemma \ref{Qmono} that the map $P^\wedge(P^\wedge(C))\to P^\wedge_2(C)$ is a monomorphism.
We shall say that an element of $P^\wedge(C)$ is {\em representative} if its image by $\Delta:P^\wedge(C)\to P^\wedge_2(C)$ belongs to $P^\wedge(P^\wedge(C))$, that is the subobject of representative elements is the object $Q_1(C)$ of section \ref{sectioncoreflection} defined as the fiber product
$$\xymatrix{
Q_1(C)\ar[r]\ar[d]&(P^\wedge)^2(C) \ar[d]&\\
P^\wedge(C)\ar[r]^{\Delta} & P^\wedge_2(C).
}$$

The following lemma is useful to work with representative elements as representative functions are more handy.
\begin{lemma}\label{represrepres}
An element of $P^\wedge(C)$ is representative iff it is a representative function in $P(C^\star)^\star$.
\end{lemma}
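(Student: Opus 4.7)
The plan is to identify $Q_1(C)$ with the fibre product $P^\wedge(C)\times_{P(C^\star)^\star} P^{rep}(C^\star)$; this is literally the statement that an element of $P^\wedge(C)$ is representative (as an element of $Q_1(C)$) if and only if its image in $P(C^\star)^\star$ is a representative function. The framework comes from the commutative diagram of lemma \ref{basecube}, whose left column defines $Q_1(C)$ and whose bottom row (together with the middle column) assembles the fibre product defining $P^{rep}(C^\star)$. All six vertical/internal comparison arrows of that diagram are monomorphisms (by lemma \ref{prodfibPwedge}, hypothesis \ref{hypo}, and the mono $C\hookrightarrow C^{\star\star}$), so the whole argument reduces to a comparison of nested fibre products of monomorphisms.

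The key technical step is to show that the right-hand square of the diagram of lemma \ref{basecube},
\[
\xymatrix{
(P^\wedge)^2(C)\ar[r]\ar[d] & P^\wedge(P(C^\star)^\star)\ar[d]\\
P^\wedge_2(C)\ar[r] & P^{2\star}(C^\star),
}
\]
is cartesian. Granting this, pasting pullbacks yields
\[
Q_1(C) = P^\wedge(C)\times_{P^\wedge_2(C)} (P^\wedge)^2(C) \;\cong\; P^\wedge(C)\times_{P^{2\star}(C^\star)} P^\wedge(P(C^\star)^\star),
\]
and since $P^\wedge(C)\to P^{2\star}(C^\star)$ factors through $P(C^\star)^\star$ by the commutativity of the left square of \ref{basecube}, another pasting gives $Q_1(C)\cong P^\wedge(C)\times_{P(C^\star)^\star} P^{rep}(C^\star)$, which is exactly the desired equivalence.

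To establish the cartesian property, I would pull the common outer end $[P^{\overline{x}}_y,-]$ outside and reduce to showing that the square of inner terms
\[
\xymatrix{
\otimes_i [P^{\overline{z_i}}_{x_i}, C^{\otimes \overline{z_i}}] \ar[r]\ar[d] & \otimes_i (P(C^\star)_{x_i})^\star\ar[d]\\
[\PP^{\overline{z}}_{\overline{x}}, C^{\otimes \overline{z}}] \ar[r] & (P(C^\star)_{\overline{x}})^\star
}
\]
is cartesian in $\cV$. By the considerations above all four corners sit monomorphically inside the bottom-right corner, so the statement becomes the intersection identity asserting that, inside $(P(C^\star)_{\overline{x}})^\star$, the subobject of elements ``factoring as a finite tensor of functionals on the $P(C^\star)_{x_i}$'' meets the subobject of elements ``taking values in $C^{\otimes \overline{z}}$ rather than in its double dual'' exactly in $\otimes_i [P^{\overline{z_i}}_{x_i}, C^{\otimes \overline{z_i}}]$. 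This is precisely the regime handled by lemma \ref{intertens} (in the refined form $(A_1\otimes B_1)\cap(A_2\otimes B_2)=(A_1\cap A_2)\otimes(B_1\cap B_2)$ applied factor by factor), together with lemma \ref{liftlemma} to extract the finite-rank tensor decomposition of each factor. This intersection/finite-rank verification is the main obstacle of the proof and is the place where hypothesis \ref{hypo} on $\cV$ is genuinely used.
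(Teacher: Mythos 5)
Your argument is essentially the paper's: the paper assembles the same data into a commutative cube (back face defining $Q_1$, front face defining $P^{rep}$, base and right faces supplied by lemma \ref{basecube}) and likewise reduces everything to the cartesianness of the right square of \ref{basecube}, pulled through the outer end $[P^{\overline{x}}_y,-]$ to the same inner square. The only discrepancy is in the last step: the statement actually needed is a hom--tensor cartesian square, namely that $[A,A']\otimes[B,B']$ is the intersection of $[A,A'']\otimes[B,B'']$ with $[A\otimes B,A'\otimes B']$ inside $[A\otimes B,A''\otimes B'']$ for $A'\subset A''$ and $B'\subset B''$, which is not literally lemma \ref{intertens} or \ref{liftlemma}; the paper states it separately and proves it by the same direct-sum decompositions $A''=A'\oplus A'''$, $B''=B'\oplus B'''$ that underlie those lemmas, so your sketch closes the same way with only this citation adjusted.
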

\begin{proof}

We have the following commutative cube
$$\xymatrix{
Q_1(C)\ar[rr]\ar[dd]\ar[rd]&& (P^\wedge)^2(C)\ar'[d][dd]\ar[rd]\\
&P^{rep}(C^\star)\ar[rr]\ar[dd]&& P^\wedge(P(C^\star)^\star)\ar[dd]\\
P^\wedge(C)\ar'[r][rr]\ar[rd]&& P^\wedge_2(C)\ar[rd]\\
&P(C^\star)^\star\ar[rr]&& P^{2\star}(C^\star).
}$$
The back face is cartesian by definition of $Q_1$.
The front face is cartesian by definition of $P^{rep}(C^\star)$.
The base and right faces are commutative by lemma \ref{basecube}.
Then, the left and top faces are deduced from the commutation of other faces and by the cartesian property of the front face.

The lemma will be proved if the left face is proven to be cartesian, this will be a consequence of the three others vertical faces being cartesian.
We need to prove that the right face is cartesian. 
%In component it is given by
%$$\xymatrix{
%[P_y^{\overline{x}}, \otimes_i[P_{x_i}^{\overline{z_i}}, C^{\otimes \overline{z_i}}]]  \ar[d]\ar[rr]&& [P_y^{\overline{x}}, \otimes_i(P(C^\star)_{x_i})^\star]\ar[d]\\
%[P_y^{\overline{x}},[\otimes_i P_{x_i}^{\overline{z_i}}, C^{\otimes \overline{z_1}\dots \overline{z_n}}]]\ar[rr]&& [P_y^{\overline{x}}, (\otimes_iP(C^\star)_{x_i})^\star]
%}$$
Since the end with $P^{\overline{x}}_y$ preserves limits, it is enough to prove that the square 
$$\xymatrix{
\otimes_i[P_{x_i}^{\overline{z_i}}, C^{\otimes \overline{z_i}}]  \ar[d]\ar[rr]&& \otimes_i(P(C^\star)_{x_i})^\star\ar[d]\\
[\otimes_i P_{x_i}^{\overline{z_i}}, C^{\otimes \overline{z_1}\dots \overline{z_n}}]\ar[rr]&& (\otimes_iP(C^\star)_{x_i})^\star
}$$
is cartesian.
This follows essentially from the cartesian squares in $\cV$
$$\xymatrix{
[A,A']\otimes [B,B']\ar[rr]\ar[d] &&[A,A'']\otimes [B,B'']\ar[d]\\
[A\otimes B,A'\otimes B']\ar[rr] &&[A\otimes B,A''\otimes B'']
}$$
where $A'\subset A''$ and $B'\subset B''$. This can be proven using a decomposition $A''=A'\oplus A'''$ and $B''=B'\oplus B'''$.
\end{proof}

\subsection{Translations and recursive functions}\label{recursivefct}

We keep the previous notations.
Let $f$ be a function $P(A)_y\to k$, 
let us fix an operation $\mu\in P^{\overline{x}}_y$ together with a distinguished position $1\leq k\leq n$ (corresponding to the color $x_k$)
and let us fix elements $a_i\in P(A)_{x_i}$ for any $i\not=k$.
The {\em $\mu$-translation} of $f$ by the $(a_i)$ is the function $(\Delta^{(\mu)}f)^k_{(a_i)}:P(A)_{x_k}\to k$ defined by
$$
(\Delta^{(\mu)}f)^k_{(a_i)}(b) = f(\mu(a_1,\dots, b,\dots a_n))
$$
where $b$ is put in the $k$th position.
We shall say that a function $f:P(A)_y\to k$ is {\em recursive} if, for every operation $\mu\in P$, the space of all $\mu$-translations is of finite dimension. 
We emphasize that this finiteness condition is for every $\mu$ separately.
The space of $\mu$-translations is properly defined as the image of the map
\begin{eqnarray*}
\lambda^k(f\mu):P(A)_{x_1}\otimes \dots \otimes P(A)_{x_n} &\tto& (P(A)_{x_k})^\star\\
A_1,\dots a_n &\mto & b\mapsto f(\mu(a_1,\dots, b,\dots a_n))
\end{eqnarray*}
where $P(A_{x_k})$ is missing in the tensor product.
By lemma \ref{liftlemma}, $f$ is recursive iff 
$$
\Delta^{(\mu)}f = f\mu\in \bigcap_k\Big(\bigotimes_{i\not=k} P(A)_{x_i}\Big)^\star\otimes (P(A)_{x_k})^\star.
$$

%We shall denote by $P^{rec}(A)^y\subset (P(A)_y)^\star$ the subspace of recursive functions.

The following proposition is the main tool for representative functions.
\begin{prop}\label{rep=rec}
A function $f$ is representative iff it is recursive.
\end{prop}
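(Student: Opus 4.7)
The plan is to fix an operation $\mu\in P^{\overline x}_y$ of arity $n$ and prove separately the two inclusions between
$$
\bigotimes_i P(A)_{x_i}^\star
\quad\text{and}\quad
\bigcap_{k=1}^n\Bigl(\bigotimes_{i\ne k} P(A)_{x_i}\Bigr)^\star\otimes P(A)_{x_k}^\star,
$$
both regarded as subspaces of $(\bigotimes_i P(A)_{x_i})^\star$ via the monomorphism of Hypothesis \ref{hypo}. The inclusion \emph{representative $\subseteq$ recursive} is immediate: a finite decomposition $f\mu=\sum_{j=1}^N g^{(1)}_j\otimes\cdots\otimes g^{(n)}_j$ can be regrouped, for every $k$, as $\sum_j\bigl(\bigotimes_{i\ne k}g^{(i)}_j\bigr)\otimes g^{(k)}_j$, exhibiting membership in the $k$-th factor of the intersection.

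For the reverse inclusion my plan is to reinterpret the recursive condition at each $k$ as a factorization through a finite-dimensional quotient in the $k$-th variable, and then glue these single-variable factorizations into a joint one. Lemma \ref{liftlemma}(2), applied with $V_1:=\bigotimes_{i\ne k}P(A)_{x_i}$ and $V_2:=P(A)_{x_k}^\star$, rephrases $f\mu\in(\bigotimes_{i\ne k}P(A)_{x_i})^\star\otimes P(A)_{x_k}^\star$ as a finite decomposition $f\mu=\sum_{j=1}^{N_k}\xi^{(k)}_j\otimes\eta^{(k)}_j$ with $\eta^{(k)}_j\in P(A)_{x_k}^\star$. Setting $K_k:=\bigcap_j\ker\eta^{(k)}_j$, the quotient $q_k:P(A)_{x_k}\twoheadrightarrow W_k:=P(A)_{x_k}/K_k$ is finite-dimensional (of dimension $\le N_k$), every $\eta^{(k)}_j$ factors through $q_k$, and equivalently $f\mu$ vanishes on the ``slab'' $S_k:=P(A)_{x_1}\otimes\cdots\otimes K_k\otimes\cdots\otimes P(A)_{x_n}$ (with $K_k$ in the $k$-th position).

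Running this construction in every index $k$ simultaneously produces finite-dimensional quotients $q_k:P(A)_{x_k}\twoheadrightarrow W_k$ such that $f\mu$ vanishes on each $S_k$, hence on their sum. Because $\cV$ is $\Vect$ or $\dgVect$, exactness of $\otimes$ over a field identifies $\sum_k S_k$ with $\ker(\bigotimes_k q_k)$ (in the dg case this is checked degree-by-degree on underlying graded spaces), so $f\mu$ factors through the finite-dimensional space $\bigotimes_i W_i$. Since each $W_i$ is finite-dimensional we have $(\bigotimes_i W_i)^\star=\bigotimes_i W_i^\star$ canonically; pulling back an elementary-tensor decomposition of the functional $\bigotimes_i W_i\to k$ along the $q_i$'s expresses $f\mu$ as a finite sum of elementary tensors in $\bigotimes_i P(A)_{x_i}^\star$, as required. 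The main technical point is the kernel identification $\ker(\bigotimes_k q_k)=\sum_k S_k$: this is the direct operadic extension of the $n=2$ computation underlying the classical argument of \cite{BL,Hazewinkel}, and is precisely where the field hypothesis on $\cV$ is used.
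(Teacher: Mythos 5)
Your proposal is correct, and both the overall reduction and the easy direction match the paper: one identifies "representative" with membership in $\bigotimes_i (P(A)_{x_i})^\star$ and "recursive" with membership in $\bigcap_k\bigl(\bigotimes_{i\ne k}P(A)_{x_i}\bigr)^\star\otimes (P(A)_{x_k})^\star$, and shows these subspaces of $\bigl(\bigotimes_i P(A)_{x_i}\bigr)^\star$ coincide. Where you genuinely diverge is in the hard inclusion. The paper proves an abstract lemma by induction on the number of tensor factors: the base case (three factors) is handled by partially evaluating $f$ in the middle variable, and the inductive step regroups factors and intersects using Lemma \ref{intertens}; as a small bonus, that formulation shows one of the $n+1$ intersection conditions may be dropped. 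You instead argue directly and uniformly in $n$: each recursivity condition gives a finite-codimensional subspace $K_k\subseteq P(A)_{x_k}$ on whose slab $f\mu$ vanishes, and the identification $\ker\bigl(\bigotimes_k q_k\bigr)=\sum_k S_k$ forces $f\mu$ to factor through the finite-dimensional quotient $\bigotimes_k W_k$, whence the elementary-tensor decomposition. This is closer in spirit to the classical recursive-function argument of \cite{BL,Hazewinkel} and avoids the induction altogether. One small point of rigor: the kernel identification does not follow from exactness of $\otimes$ alone; it is proved by choosing complements $P(A)_{x_k}=K_k\oplus W'_k$ and observing that $\bigotimes_k q_k$ kills every summand of the resulting decomposition of $\bigotimes_k P(A)_{x_k}$ except $\bigotimes_k W'_k$ (degreewise on underlying graded spaces in the dg case, exactly as the paper does elsewhere). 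With that spelled out, your argument is complete.
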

\begin{proof}
The proposition will be proven if we prove that $\bigcap_k(\otimes_{i\not=k} P(A)_{x_i})^\star\otimes (P(A)_{x_k})^\star = \otimes_i(P(A)_{x_i})^\star$.
The obvious inclusion proves that any $f$ representative is recursive.
The other inclusion is a consequence of the following lemma.
\end{proof}

\begin{lemma}
Let $f\in(\bigotimes_{i=1}^{n+1}V_i)^\star$ such that, for any $k\leq  n$, $f\in (V_k)^\star\otimes (\bigotimes_{i\not= k}V_i)^\star$, then 
$f\in \bigotimes_{i=1}^{n+1} (V_i)^\star$.
\end{lemma}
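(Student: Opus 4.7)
I would argue by induction on $n$. For the base case $n=1$ the hypothesis is $f\in V_1^\star\otimes V_2^\star$, which is the desired conclusion. For the induction step, the plan is to peel off the first tensor factor by using only the $k=1$ hypothesis to write $f$ in a useful separated form, then show that each of the ``coefficients'' in direction $V_1$ still enjoys the remaining separability conditions so that the induction hypothesis applies on $V_2,\dots,V_{n+1}$ with the conditions for $k=2,\dots,n$ (which, after relabelling, is exactly the inductive statement at level $n-1$).

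Concretely, lemma \ref{liftlemma}(1) applied to the $k=1$ hypothesis gives a finite decomposition $f=\sum_{j=1}^{N}\alpha_j\otimes g_j$ with $\alpha_j\in V_1^\star$ and $g_j\in(V_2\otimes\dots\otimes V_{n+1})^\star$; after pruning, I may assume the $\alpha_j$ are linearly independent. Then the natural map $V_1\to\mathrm{span}(\alpha_1,\dots,\alpha_N)^\star$ is surjective, so I can pick vectors $v_1,\dots,v_N\in V_1$ with $\alpha_i(v_j)=\delta_{ij}$. Contracting $f$ with $v_j$ in the first slot, i.e.\ applying the ``evaluation'' dual map $\iota_{v_j}^\star\colon (V_1\otimes V_2\otimes\dots\otimes V_{n+1})^\star\to (V_2\otimes\dots\otimes V_{n+1})^\star$, recovers $g_j$.

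The key verification is that each $g_j$ satisfies the induction hypothesis. For any $k\in\{2,\dots,n\}$, the subspace $V_k^\star\otimes(\bigotimes_{i\ne k}V_i)^\star\subset(\bigotimes_{i=1}^{n+1}V_i)^\star$ is preserved by $\iota_{v_j}^\star$ in the sense that its image lies in $V_k^\star\otimes(\bigotimes_{i\ne 1,k}V_i)^\star$: indeed a representative $\sum_l\beta_l\otimes h_l$ with $\beta_l\in V_k^\star$ and $h_l\in(\bigotimes_{i\ne k}V_i)^\star$ is sent to $\sum_l\beta_l\otimes \iota_{v_j}^\star(h_l)$. Hence $g_j$ satisfies precisely the hypotheses of the lemma with $n$ replaced by $n-1$ and variables $V_2,\dots,V_{n+1}$, so by induction $g_j\in\bigotimes_{i=2}^{n+1}V_i^\star$, and therefore $f=\sum_j\alpha_j\otimes g_j\in\bigotimes_{i=1}^{n+1}V_i^\star$.

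The only real obstacle is the existence of the dual family $(v_j)$ in the middle step; it is purely linear algebra but does use that $\cV=\Vect$ (or $\dgVect$, where the same argument applies degreewise on the underlying graded space, exactly as in the comment before lemma \ref{liftlemma}). Everything else is a bookkeeping check that the extraction $f\mapsto g_j$ is compatible with the separability conditions indexed by $k\geq 2$, which is a direct consequence of the tensorial form of those conditions.
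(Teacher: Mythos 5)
Your proof is correct, but it follows a genuinely different route from the paper's. The paper also inducts, but its inductive step works purely at the level of subspaces: it regroups factors (treating $V_{n+1}\otimes V_{n+2}$, and then $V_1\otimes V_{n+2}$, as a single tensor slot), applies the inductive hypothesis to each regrouping, and intersects the two resulting subspaces of $\bigotimes_k(V_k)^\star\otimes(\cdots)$ using lemma \ref{intertens}, finally reducing everything to a separately proved three-factor case ($n=2$), which carries the real content. You instead peel off the first factor explicitly: lemma \ref{liftlemma} gives a finite decomposition $f=\sum_j\alpha_j\otimes g_j$, a dual family $(v_j)$ recovers each $g_j$ by contraction, and the separability conditions for $k\ge 2$ visibly descend to the $g_j$ because contraction in the first slot sends $V_k^\star\otimes(\bigotimes_{i\ne k}V_i)^\star$ into $V_k^\star\otimes(\bigotimes_{i\ne 1,k}V_i)^\star$. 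Your version is more self-contained (it bypasses lemma \ref{intertens} and the regrouping trick, and makes the finiteness mechanism explicit), while the paper's is shorter on the page because it delegates the linear algebra to the auxiliary lemmas; both rest on the same underlying choices of complements and dual bases, and both adapt to $\dgVect$ degreewise. One point worth making explicit in your write-up: the identity $\iota_{v_j}^\star(f)=g_j$ genuinely needs the linear independence of the $\alpha_j$, so the pruning step is not optional.
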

\begin{proof}
The result is obvious for $n=0,1$. We prove the rest by an induction incremented at $n=2$.
Let $f\in (V_1\otimes V_2\otimes V_3)^\star$
If $f\in (V_1)^\star \otimes (V_2\otimes V_3)^\star$, then for every $x_2\in V_2$
$f(-,x_2,-)\in (V_1)^\star \otimes (V_3)^\star$, hence $f$ is in $f\in [V_2, (V_1)^\star\otimes (V_3)^\star]$.
If moreover $f\in (V_2)^\star \otimes (V_1\otimes V_3)^\star$, we can take out the $V_2$ factor and this proves that $f\in (V_2)^\star\otimes (V_1)^\star\otimes (V_3)^\star$.

Now we assume the property is true for every sequence $V_i$ of length $n+1$.
Let $V_i$ be a sequence of length $n+2$. 
We put $V'_k= \bigoplus_{i\not=k}V_i$ and $V''_{k,\ell}=\bigoplus_{i\not=k,\ell}V_i$.
We have
$$
\bigcap_{k\leq n+1} (V_k)^\star\otimes (V'_k)^\star
= 
\bigcap_{2\leq k\leq n+1} (V_k)^\star\otimes (V'_k)^\star
\cap 
\bigcap_{k\leq n} (V_k)^\star\otimes (V'_k)^\star
$$
By hypothesis, with $V_{n+1}\otimes V_{n+2}$ playing the role of $V_{n+1}$, we have
$$
\bigcap_{k\leq n} (V_k)^\star\otimes (V'_k)^\star = \bigotimes_{k\leq n} (V_k)^\star \otimes (V_{n+1}\otimes V_{n+2})^\star.
$$
Similarly we have
$$
\bigcap_{2\leq k\leq n+1} (V_k)^\star\otimes (V'_k)^\star = \bigotimes_{k\leq n} (V_k)^\star \otimes (V_{1}\otimes V_{n+2})^\star.
$$
Intersecting those two terms we get
$$
\bigotimes_{2\leq k\leq n} (V_k)^\star \otimes \Big( (V_{n+1})^\star \otimes (V_{1}\otimes V_{n+2})^\star \cap (V_1)^\star \otimes (V_{n+1}\otimes V_{n+2})^\star\big).
$$
Using the computation for $n=2$, the last part is $(V_1)^\star \otimes (V_{n+1})^\star \otimes (V_{n+2})^\star$ and finally
$$
\bigcap_{k\leq n+1} (V_k)^\star\otimes (V'_k)^\star
= \bigotimes_{1\leq k\leq n+2} (V_k)^\star.
$$
\end{proof}

The translation of a translation is again a translation, hence any translation of a recursive function is recursive since subspaces of a finite dimensional space are finite dimensional. This proves the following lemma.
\begin{lemma}\label{transrepres}
Any translation of a representative function is representative.
\end{lemma}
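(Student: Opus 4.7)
The plan is to invoke Proposition \ref{rep=rec} and work on the recursive side, since recursion is easier to propagate through translations than representativity directly. Concretely, let $f\in P^{rep}(A)^y$ be representative and let $g=(\Delta^{(\mu)}f)^k_{(a_i)}:P(A)_{x_k}\to k$ be a fixed translation, with $\mu\in P^{\overline{x}}_y$ and $a_i\in P(A)_{x_i}$ for $i\neq k$. By Proposition \ref{rep=rec}, $f$ is recursive, and it suffices to prove that $g$ is recursive, after which Proposition \ref{rep=rec} again converts back to representativity.

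The key computation is that a translation of a translation factors as a single translation along a composed operation. Pick any $\nu\in P^{\overline{w}}_{x_k}$ and any elements $b_j\in P(A)_{w_j}$ for $j\neq \ell$; then
$$
\bigl(\Delta^{(\nu)}g\bigr)^\ell_{(b_j)}(c)
\;=\; g\bigl(\nu(b_1,\dots,c,\dots,b_m)\bigr)
\;=\; f\bigl(\mu(a_1,\dots,a_{k-1},\nu(b_1,\dots,c,\dots,b_m),a_{k+1},\dots,a_n)\bigr).
$$
Using operadic composition in $P$, set $\mu'=\mu\circ_k\nu\in P^{x_1,\dots,x_{k-1},w_1,\dots,w_m,x_{k+1},\dots,x_n}_y$. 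Then the last expression is exactly the $\mu'$-translation of $f$ at position $k+\ell-1$ with the other slots filled by the concatenated tuple $(a_1,\dots,a_{k-1},b_1,\dots,\widehat{c},\dots,b_m,a_{k+1},\dots,a_n)$. Hence every $\nu$-translation of $g$ sits inside the space of $\mu'$-translations of $f$.

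Since $f$ is recursive, the space of $\mu'$-translations of $f$ is finite-dimensional, so any subspace of it, in particular the space of $\nu$-translations of $g$, is finite-dimensional as well. As $\nu$ was arbitrary, $g$ is recursive, and therefore representative by Proposition \ref{rep=rec}. The only thing to be careful about is the indexing bookkeeping of operadic composition (which variable of $\mu'$ corresponds to which input); this is routine and presents no real obstacle, the substantive content being just the identity "translation of a translation equals a translation by the composite operation" together with the trivial fact that subspaces of finite-dimensional spaces are finite-dimensional.
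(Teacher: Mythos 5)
Your proof is correct and is exactly the paper's argument: the paper disposes of this lemma in one sentence by observing that a translation of a translation is a translation along the composite operation, so recursiveness (hence, via Proposition \ref{rep=rec}, representativity) is inherited because subspaces of finite-dimensional spaces are finite-dimensional. You have merely spelled out the indexing that the paper leaves implicit.
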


The following proposition is the main result for representative functions.
\begin{prop}\label{liftrepres}
%For $f:P(A)_y\to k$ and $\mu\in P^{\overline{x}}_y$, 
If $f$ is representative, then the $g^{(i)}$ in $\Delta^{(\mu)}f=g^{(1)}\otimes \dots \otimes g^{(n)}$ can be chosen to be representative functions.
\end{prop}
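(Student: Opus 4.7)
The plan is to mimic the argument of \cite{BL, Hazewinkel} for the associative case, using the characterization of representative functions as recursive ones (Proposition \ref{rep=rec}) together with the fact that translations of representative functions are again representative (Lemma \ref{transrepres}). Fix an operation $\mu \in P^{\overline{x}}_y$. For each $k=1,\dots,n$, let $W_k \subset (P(A)_{x_k})^\star$ denote the linear span of all $\mu$-translations $(\Delta^{(\mu)}f)^k_{(a_i)}$ of $f$ in position $k$ as $(a_i)_{i\neq k}$ varies over elements of appropriate color. Since $f$ is representative hence recursive, each $W_k$ is finite-dimensional; by Lemma \ref{transrepres}, each $W_k$ is contained in $P^{rep}(A)^{x_k}$.

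The key claim is that $f\mu$ already lies in the subspace $\bigotimes_k W_k \subset \bigotimes_k (P(A)_{x_k})^\star$; once this is established, any decomposition $f\mu = \sum_j g^{(1)}_j\otimes\cdots\otimes g^{(n)}_j$ with $g^{(k)}_j \in W_k$ furnishes the desired representative factors. To prove the claim, fix $k$ and choose a linear complement $W_k'$ of $W_k$ inside $(P(A)_{x_k})^\star$. This yields the direct sum decomposition
$$
\bigotimes_i(P(A)_{x_i})^\star = \Big(\bigotimes_{i\neq k}(P(A)_{x_i})^\star \otimes W_k\Big) \oplus \Big(\bigotimes_{i\neq k}(P(A)_{x_i})^\star \otimes W_k'\Big),
$$
and accordingly $f\mu = t_k + t_k'$. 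Partial evaluation at any $(a_i)_{i\neq k}$ respects this splitting and by definition produces an element of $W_k$; hence the $W_k'$-component of the evaluation, which coincides with the partial evaluation of $t_k'$, must vanish. Since the natural map $\bigotimes_{i\neq k}(P(A)_{x_i})^\star \otimes W_k' \to \Hom(\bigotimes_{i\neq k}P(A)_{x_i},W_k')$ is injective, this forces $t_k' = 0$, whence $f\mu \in \bigotimes_{i\neq k}(P(A)_{x_i})^\star \otimes W_k$.

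Applying this to each $k$ and intersecting inside $\bigotimes_i(P(A)_{x_i})^\star$ yields
$$
f\mu \in \bigcap_k \Big(\bigotimes_{i\neq k}(P(A)_{x_i})^\star \otimes W_k\Big) = \bigotimes_k W_k,
$$
where the final equality is an iterated application of Lemma \ref{intertens}(2), restricting one tensor factor at a time. I expect this intersection step to be the only delicate point; it is a routine higher-arity generalization of Lemma \ref{intertens}(2), and all other ingredients have already been assembled in the preceding lemmas.
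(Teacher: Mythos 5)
Your proof is correct and follows essentially the same route as the paper's: pass to recursiveness via Proposition \ref{rep=rec}, observe that the (finite-dimensional) span $W_k$ of $\mu$-translations consists of representative functions by Lemma \ref{transrepres}, show $f\mu$ lies in $\bigotimes_{i\neq k}(P(A)_{x_i})^\star\otimes W_k$ for each $k$, and conclude by intersecting with Lemma \ref{intertens}. The only difference is that you spell out, via the complement $W_k'$ and the injectivity of $\bigotimes_{i\neq k}(P(A)_{x_i})^\star\otimes W_k'\to\Hom(\bigotimes_{i\neq k}P(A)_{x_i},W_k')$, the step that the paper states without justification, which is a welcome elaboration rather than a different argument.
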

\begin{proof}

If $f$ is representative, it is recursive and the space of translations $(\Delta^{(\mu)}f)^k_{(a_i)}$ is finite dimensional.
Let us fix a basis of this space given by some $(n-1)$-uplets $\alpha^j=(\alpha_i^j)_{i\not=k}$ and put $g^j= (\Delta^{(\mu)}f)^k_{(\alpha_i^j)}$.
By lemma \ref{transrepres} the $g^j$ are representative functions and $f$ can be written $f=\sum g^j\otimes h^j$ for some 
$h^j\in \bigotimes_{\ell\not=k}(P(A)_{x_\ell})^\star$.
As this is true for every $k$, this proves that $\Delta^{(\mu)}f$ is in fact in the subspace of $\bigotimes_\ell(P(A)_{x_\ell})^\star$ given by
$$
\bigcap_k P^{rec}(A)_{x_k}\otimes \bigotimes_{\ell\not=k}(P(A)_{x_\ell})^\star
$$
Using lemma \ref{intertens} this intersection is simply $\bigotimes_{\ell} P^{rec}(A)_{x_\ell}$. This proves the result.
\end{proof}

The previous proposition proves that if $f:P(A)_y\to k$ is representative then
$\Delta f\in [P^{\overline{x}}_y, \bigotimes_{i} P^{rec}(A)_{x_i}] = P^\wedge(P^{rep}(A))$.
Equivalently, we have a commutative triangle
$$\xymatrix{
&P^\wedge(P^{rep}(C^\star))\ar[d]\\
P^{rep}(C^\star)\ar[r]\ar[ru]& P^\wedge(P(C^\star)^\star).
}$$

\subsection{Cofree coalgebras and representative functions}

Here is the main consequence of proposition \ref{liftrepres}.

\begin{thm}\label{mainthm2}
$Q_1(C)$ is a $P$-coalgebra, moreover this structure is natural in $C$.
\end{thm}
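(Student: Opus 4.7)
The plan is to construct the structure map $\delta' : Q_1(C) \to P^\wedge(Q_1(C))$ directly from Proposition \ref{liftrepres}, and then verify the counit and coassociativity axioms of section \ref{coalgebras} by exploiting the fact that the relevant comparison maps are monomorphisms, thereby reducing every identity to a corresponding identity already known on $P^\wedge(C)$, $(P^\wedge)^2(C)$ or $P^\wedge_n(C)$.

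First I would construct $\delta'$. By definition of $Q_1(C)$ as the fiber product of $\Delta : P^\wedge(C) \to P^\wedge_2(C)$ with $\alpha_{1,1} : (P^\wedge)^2(C) \to P^\wedge_2(C)$, there is a canonical projection $\tilde\delta : Q_1(C) \to (P^\wedge)^2(C) = P^\wedge(P^\wedge(C))$ extending $\Delta\circ i$, where $i : Q_1(C) \hookrightarrow P^\wedge(C)$. Proposition \ref{liftrepres}, read through Lemma \ref{represrepres}, says precisely that the image of $\tilde\delta$ lies inside $P^\wedge(Q_1(C))$ viewed as a subobject of $P^\wedge(P^\wedge(C))$ (this inclusion being a mono because $P^\wedge$, being an end with $[-,-]$ in the second variable, preserves monomorphisms, cf. Lemma \ref{prodfibPwedge}). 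The resulting factorization is the desired $\delta' : Q_1(C) \to P^\wedge(Q_1(C))$, and by uniqueness of lifts through a monomorphism, $\delta'$ is the only map making the square $i \circ \text{id} = P^\wedge(i) \circ \delta'$ compatible with $\tilde\delta$.

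Next I would check the counit condition. Let $\eta : P^\wedge \to \mathrm{Id}$ be the canonical projection; we must show $\eta_{Q_1(C)} \circ \delta' = \mathrm{id}_{Q_1(C)}$. Since $i$ is a monomorphism, it suffices to verify this after composing with $i$. Using naturality of $\eta$, the composite $i \circ \eta_{Q_1(C)} \circ \delta'$ equals $\eta_{P^\wedge(C)} \circ P^\wedge(i) \circ \delta' = \eta_{P^\wedge(C)} \circ \tilde\delta$. Now $\eta_{P^\wedge(C)} \circ \tilde\delta$ is the projection of $\tilde\delta$ onto the "length one" part of $(P^\wedge)^2(C)$, which after mapping into $P^\wedge_2(C)$ coincides, by the counit axiom of the lax comonad $P^\wedge$ (one of the simplicial identities for the face $P^\wedge_2 \to P^\wedge_1$), with $i$ itself. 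Hence $i \circ \eta_{Q_1(C)} \circ \delta' = i$, whence the claim.

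For coassociativity I would compare the two composites $Q_1(C) \rightrightarrows P^\wedge(P^\wedge(Q_1(C)))$ (built respectively from $\delta'$ iterated on the outside and from $P^\wedge(\delta')$). Applying successively the monomorphisms $P^\wedge(Q_1(C)) \hookrightarrow (P^\wedge)^2(C)$, $P^\wedge(P^\wedge(Q_1(C))) \hookrightarrow (P^\wedge)^3(C)$ and the lax structure map $\alpha_{1,1,1} : (P^\wedge)^3(C) \to P^\wedge_3(C)$, the equality of the two composites reduces to the commutation of the corresponding diagram in $P^\wedge_3(C)$, which is exactly the higher coassociativity of $\Delta$ in the lax comonad $P^\wedge$ (Proposition \ref{laxcomonP}). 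Since all the intermediate maps are monomorphisms by Lemma \ref{prodfibPwedge} combined with Hypothesis \ref{hypo}, the equality in $P^\wedge_3(C)$ propagates back to equality in $P^\wedge(P^\wedge(Q_1(C)))$.

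Naturality in $C$ comes for free: $Q_1$ is a fiber product of constructions natural in $C$, hence functorial, and for any $\phi : C \to C'$ the uniqueness of the lift of $\tilde\delta$ through the mono $P^\wedge(Q_1(C')) \hookrightarrow P^\wedge(P^\wedge(C'))$ forces $P^\wedge(Q_1(\phi)) \circ \delta'_C = \delta'_{C'} \circ Q_1(\phi)$. The main obstacle is the coassociativity step, since one must track carefully how the iterated ends and the maps $\alpha$ interact; the key simplification is that everything injects into $P^\wedge_3(C)$, so no genuinely new identity has to be proved.
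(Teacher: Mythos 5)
Your proposal follows essentially the same route as the paper: construct $\delta$ by factoring the canonical projection $Q_1(C)\to (P^\wedge)^2(C)$ through $P^\wedge(Q_1(C))$ using Proposition \ref{liftrepres} and Lemma \ref{represrepres}, then verify the counit and coassociativity by embedding everything into $P^\wedge_n(C)$ via monomorphisms and appealing to the simplicial and lax-monoidal identities of $P^\wedge$. The one point to sharpen is that the factorization through $P^\wedge(Q_1(C))$ needs the full strength of Lemma \ref{prodfibPwedge} --- namely that $P^\wedge$ preserves the cartesian square exhibiting $Q_1(C)$ as an intersection inside $P(C^\star)^\star$ --- and not merely that $P^\wedge$ preserves monomorphisms.
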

\begin{proof}
We consider the following diagram
%$$\xymatrix{
%&P^\wedge(Q_1(C))\ar[dd]\ar[rd]\ar[rr]&&(P^\wedge)^3(C)\ar[dd]\\
%&&P^\wedge(P^{rep}(C^\star))\ar[dd]&&\\
%Q_1(C)\ar[r]\ar[rd]\ar@{-->}[ruu]& (P^\wedge)^2(C)\ar[rd] \ar'[r][rr]&&P^\wedge P^\wedge_2(C)\\
%&P^{rep}(C^\star)\ar[r]\ar[ruu]& P^\wedge(P(C^\star)^\star)&&\\
%}$$
$$\xymatrix{
&&P^\wedge(Q_1(C))\ar[d]\ar[rd]\\
Q_1(C)\ar[rr]\ar[rd]\ar@{-->}[rru]&& (P^\wedge)^2(C)\ar[rd]|(.35)\hole &P^\wedge(P^{rep}(C^\star))\ar[d]\\
&P^{rep}(C^\star)\ar[rr]\ar[rru]&& P^\wedge(P(C^\star)^\star)\\
}$$

The existence of the front face is a reformulation of proposition \ref{liftrepres}.
The right face is the image by $P^\wedge$ of the left face of the cube of lemma \ref{represrepres}).
This face is a cartesian square of monomorphisms, by lemma \ref{prodfibPwedge}, the image by $P^\wedge$ is still a cartesian square of monomorphisms.
Finally, the dashed arrow is constructed using the cartesian structure of the right face.
The map $\delta:Q_1(C)\to P^\wedge(Q_1(C))$ will be the coalgebra structure on $Q_1(C)$.
$\delta$ is natural in $C$ since all maps of the diagram are.

To prove the coassociativity we consider the following diagram.
$$\xymatrix{
&&&&(P^\wedge)^2Q_1\ar[dd]^{(P^\wedge)^2(\chi_1)}\ar@{=}[lddd]\\
\\
&&P^\wedge Q_1 \ar[rruu]^{P^\wedge(\delta)}\ar[rr]|(.67)\hole^(.4){P^\wedge(\delta_1)} \ar[dd]_{P^\wedge(\chi_1)}\ar[dr]&& (P^\wedge)^3\ar[dd]\\
&&&(P^\wedge)^2Q_1\ar[dd]&\\
Q_1\ar[rr]^{\delta_1}\ar[rruu]^-{\delta} \ar[dd]_{\chi_1}&&(P^\wedge)^2\ar'[r][rr] \ar[dd]\ar[rd]&& P^\wedge P^\wedge_2\ar[dd]\\
&&& P^\wedge_2P^\wedge\ar[dd]&\\
P^\wedge \ar[rr]&& P^\wedge_2 \ar'[r][rr] \ar[rd]&& P^\wedge_3\ar@{=}[ld]\\
&&&P^\wedge_3
}$$
We have not indicated the name of all maps, hoping the missing ones should be clear enough, they are all constructed from $\chi_1$,  and the structure maps of the lax comonad $P^\wedge$.
The diagram is not fully commutative.
The back squares and triangles are commutative, 
the left and right sides of the prism are also commutative, but the top and bottom triangle are not.
Since everything is natural in $C$, we have withdrawn it from the notation.

The coassociativity is equivalent to the commutation of the top fork.
The diagram reduce this condition to the commutation of the bottom fork because vertical maps are monomorphisms.
The commutation of the bottom fork is a consequence of the simplicial diagram of $P^\wedge_n$.

The counit condition is left to the reader.
\end{proof}

The next result proves that the recursion of section \ref{sectioncoreflection} stops at the first step. 

\begin{cor}\label{cormainthm2}
$Q_1$ is the cofree $P$-coalgebra functor.
\end{cor}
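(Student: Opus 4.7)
The strategy is to verify that $Q_1$ itself satisfies the universal property of the comonadic coreflection of $P^\wedge$; combined with the uniqueness statement in Theorem \ref{mainthm1} and its corollary identifying $Q_\infty$ with the cofree $P$-coalgebra comonad, this gives the result. Equivalently, the plan is to show that the projection $Q_\infty \to Q_1$ is an isomorphism, so the recursion collapses after one step.

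First, I upgrade $Q_1$ from a functor to a genuine comonad. Theorem \ref{mainthm2} provides a natural $P$-coalgebra structure $\delta_C : Q_1(C) \to P^\wedge(Q_1(C))$. The key observation is that for \emph{any} $P$-coalgebra $(X,\delta_X)$, the structure map $\delta_X : X \to P^\wedge(X)$ automatically factors through the subobject $Q_1(X) \hookrightarrow P^\wedge(X)$: indeed, coassociativity of $\delta_X$ is precisely the fiber-product equation cutting $Q_1(X)$ out of $P^\wedge(X)$. Applied to $X = Q_1(C)$, this yields a natural comultiplication $\Delta_C : Q_1(C) \to Q_1(Q_1(C)) = (Q_1)^2(C)$; together with the counit $Q_1(C) \to P^\wedge(C) \to C$, this makes $Q_1$ a comonad, and the inclusion $\chi_1 : Q_1 \hookrightarrow P^\wedge$ is a morphism of lax comonads essentially by construction.

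Next, I verify $Q_1$ is universal among comonads mapping to $P^\wedge$. Let $R$ be any comonad equipped with a lax comonad morphism $\phi : R \to P^\wedge$. The comonad structure of $R$ together with $\phi$ yields a commutative square witnessing that $R \xrightarrow{\phi} P^\wedge \xrightarrow{\Delta} P^\wedge_2$ lifts through $(P^\wedge)^2$ via $R \xrightarrow{\Delta_R} R^2 \xrightarrow{\phi\phi} (P^\wedge)^2$; this is precisely the fiber-product condition defining $Q_1$ applied after $\phi$. Hence $\phi$ factors uniquely as $R \to Q_1 \xrightarrow{\chi_1} P^\wedge$, and the resulting map is a morphism of comonads by exactly the argument in the final lines of the proof of Theorem \ref{mainthm1} (comparing the recursive construction of liftings, which here collapses immediately because the target is already a comonad).

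By uniqueness of coreflections, the map $Q_1 \to Q_\infty$ induced by the universal property, and the projection $Q_\infty \to Q_1$ from the tower defining $Q_\infty$, are mutually inverse; hence $Q_1 \cong Q_\infty$. Combined with Proposition \ref{propcoref} and the corollary of Theorem \ref{mainthm1}, this gives that $Q_1$ is the cofree $P$-coalgebra functor. The main subtlety in this plan is the first step: coassociativity of the new comultiplication $\Delta_C$ must be traced back to the coassociativity of the $P$-coalgebra structure on $Q_1(C)$ established in Theorem \ref{mainthm2}, which in turn requires the careful identification of $(Q_1)^2(C) = Q_1(Q_1(C))$ as the intended subobject of $(P^\wedge)^2(C)$, using that $P^\wedge$ preserves the relevant monomorphisms (Lemma \ref{prodfibPwedge}). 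The counit axioms are, by contrast, immediate from the construction of $\chi_1$.
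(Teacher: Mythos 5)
Your argument is correct in outline and reaches the right conclusion, but by a genuinely different route from the paper's. The paper never makes $Q_1$ into a comonad directly: it uses Theorem \ref{mainthm2} together with Proposition \ref{propcoref} to see that $Q_1(C)$, being a $P$-coalgebra, is a $Q_\infty$-coalgebra, extracts from the coassociativity of that structure exactly the cone needed to produce a map $Q_1(C)\to Q_2(C)$, and concludes $Q_1=Q_2$ because $\chi_2:Q_2\to Q_1$ is a monomorphism (Lemma \ref{Qmono}); the tower is then constant and $Q_1=Q_\infty$. You instead bypass $Q_2$ and the tower entirely: you observe that the structure map of any $P$-coalgebra factors through $Q_1$ of its underlying object (correct --- coassociativity is literally the fiber-product equation defining $Q_1$), use this to equip $Q_1$ with a comultiplication $Q_1\to Q_1^2$, and then re-verify the coreflection universal property for $Q_1$ itself, identifying it with $Q_\infty$ by uniqueness. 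The paper's route buys economy: it only has to exhibit one section of one monomorphism, and all comonad axioms are inherited from Lemma \ref{comonadic}, already proved for $Q_\infty$. Your route buys conceptual directness --- in principle it dispenses with $Q_\infty$ altogether, since Proposition \ref{propcoref} applied to $Q_1$ gives the corollary outright --- but at the price of the step you correctly flag as the main subtlety: coassociativity and counitality of $\Delta_C:Q_1(C)\to Q_1(Q_1(C))$ as a comonad structure, which requires embedding $Q_1^3(C)$ into $(P^\wedge)^3(C)$ and thence into $P^\wedge_3(C)$ (using Lemma \ref{prodfibPwedge} and Hypothesis \ref{hypo}) and running a diagram chase comparable to the coassociativity prism in the proof of Theorem \ref{mainthm2}. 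That deferred verification is real work, so your proof is not complete as written, but the plan is sound, the steps you do carry out (the factorization through $Q_1$, the universal property against an arbitrary comonad $R$, the uniqueness argument) are all valid, and they match the corresponding fragments of the proof of Theorem \ref{mainthm1}.
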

\begin{proof}
With the notations of section \ref{sectioncoreflection}, we want to prove that $Q_1=Q_\infty$. 
It is actually enough to prove that $Q_2=Q_1$ since the whole tower will be constant under this hypothesis.

Since $Q_1(C)$ is a $P$-coalgebra, it is a $Q_\infty$-coalgebra, hence there exists a map $\delta:Q_1(C)\to Q_\infty Q_1(C)$ lifting $Q_1(C)\to P^\wedge Q_1(C)$ and satisfying a coassociativity condition.
Using the projections $Q_\infty\to Q_1$ and $Q_1\to P^\wedge$ we get from this condition a commutative diagram
$$\xymatrix{
&& (Q_1)^2(C) \ar@<.6ex>[r]\ar@<-.6ex>[r] \ar[d]&(Q_1)^3(C)\ar[r]&(P^\wedge)^3(C)\\
Q_1(C)\ar[rr]\ar[rru]^{\delta}&& (P^\wedge)^2(C) .
}$$
By definition of $Q_2$, we get a map $Q_1(C)\to Q_2(C)$ but since $Q_2(C)\to Q_1(C)$ is a monomorphism, this proves that $Q_1=Q_2$ and the result.
\end{proof}

In other terms, the cofree $P$-coalgebra $P^\vee(C)$ on $C$ is the subobject of $P^\wedge$ defined by the fiber product
$$\xymatrix{
P^\vee(C)\ar[rr]\ar[d]&& (P^\wedge)^2(C)\ar[d]\\
P^\wedge(C)\ar[rr]&& P^\wedge_2(C).
}$$

\begin{rem}\label{remsmithfinal}
This proves also that the recursion of \cite{Smith} stops at the first step since our $Q_1$ is his $L_1$ (see remark \ref{remsmith}).
\end{rem}

\end{document}